\numberwithin{equation}{section}
\pgfplotsset{compat=1.17}
\definecolor{mygreen}{RGB}{37,170,37}
\definecolor{myteal}{RGB}{6,144,179}
\definecolor{myblue}{RGB}{59,100,189}
\definecolor{myorange}{RGB}{240,153,85}
\definecolor{myred}{RGB}{190,15,15}
\definecolor{myyellow}{RGB}{255,220,130}
\definecolor{mypurple}{RGB}{125,0,125}
\newcommand{\R}{\mathbb{R}}
\newcommand{\C}{\mathbb{C}}
\newcommand{\PP}{\mathbb{P}}
\DeclareMathOperator{\conv}{conv}
\newcommand{\lprod}{\scalebox{1.2}{\guilsinglleft}}
\newcommand{\rprod}{\scalebox{1.2}{\guilsinglright}}
\newtheorem{theorem}{Theorem}[section]
\newtheorem{proposition}[theorem]{Proposition}
\newtheorem{corollary}[theorem]{Corollary}
\theoremstyle{definition}
\newtheorem{remark}[theorem]{Remark}
\newtheorem{example}[theorem]{Example}
\DeclareMathOperator{\Pic}{Pic}
\DeclareMathOperator{\trace}{trace}
\DeclareMathOperator{\rank}{rank}
\title{Convex hulls of surfaces in fourspace}
\author[1]{Chiara Meroni}
\author[2]{Kristian Ranestad}
\author[3]{Rainer Sinn}
\affil[1]{MPI MiS Leipzig, Leipzig, Germany; \url{chiara.meroni@mis.mpg.de}}
\affil[2]{Universitetet i Oslo, Oslo, Norway; \url{ranestad@math.uio.no}}
\affil[3]{Universit\"at Leipzig, Leipzig, Germany; \url{rainer.sinn@uni-leipzig.de}}
\date{}
\begin{document}
\maketitle

\begin{abstract}
    This is a case study of the algebraic boundary of convex hulls of varieties. We focus on surfaces in fourspace to showcase new geometric phenomena that neither curves nor hypersurfaces do. Our method is a detailed analysis of a general purpose formula by Ranestad and Sturmfels in the case of smooth real algebraic surfaces of low degree (that are rational over the complex numbers). We study both the complex and the real features of the algebraic boundary of Veronese, Del Pezzo and Bordiga surfaces.
\end{abstract}


\section{Introduction}\label{sec:intro_ch}
Convex algebraic geometry is the study of convex semi-algebraic sets. The Zariski closure of the Euclidean boundary of such a semi-algebraic convex body is an algebraic hypersurface, called its algebraic boundary. This hypersurface is a central object, generalizing the hyperplane arrangement associated to a polytope, see~\cite{sinn:algbound}. Information regarding the algebraic boundary of a convex body can help in the study of the convex body itself. 
As for polytopes, it is a hard problem to describe the algebraic boundary given the extreme points of the convex body. This is the problem that we study in this paper. There is a general formula for the algebraic boundary of a convex hull of a variety in terms of projective duality due to Ranestad and Sturmfels \cite{RanStu:ConvexHullVariety} inspired by their previous work on convex hulls of space curves \cite{RanStu:ConvexHullCurves}.
The formula gives an inclusion of the algebraic boundary of the convex hull of a variety, when it is a full dimensional convex compact set, into the union of some varieties $X^{[k]}$ that we will define below.
Here, we take this work further into higher dimension: we consider convex hulls of surfaces in four-dimensional space and determine irreducible components that actually contribute to their algebraic boundaries, and their degrees.

Our approach has two very different steps:
The first step concerns the complex picture. For a projective smooth surface $X$, the  loci $X^{[k]}$ are in general not well understood.  
For small $k$ their degrees are computed by  Kazarian's formulae.  For our surfaces a more detailed study of the loci $X^{[k]}$ allows us to compute the degree of the dual hypersurfaces $\left( X^{[k]} \right)^*$.
In the second step, we look at the real geometry of the surfaces. This involves the topology of the real locus of the surface as well as their different real forms (if known). For instance, we will consider curvature of hypersurfaces as a tool to determine the actual algebraic boundary of the convex body. To understand the real picture, our arguments are more ad hoc as they need to be adapted to analyze different real forms for the same complex algebraic surface. We do this in detail in particular for Del Pezzo surfaces. They are complete intersections of two quadrics and the signatures of the quadrics in this pencil are the main feature that determines the algebraic boundary of the convex hull. 

Our goal is to provide a case study of these phenomena for smooth surfaces of low degree that are rational as complex algebraic varieties.
Those are the Veronese surfaces, the Del Pezzo surfaces (both of degree 4 in $\PP^4$), and the Bordiga surfaces (of degree 6). Our restriction to even degrees comes from real considerations: we are interested only in those cases in which the convex hull of the surface is a full-dimensional convex body, hence compact.
For a compact real Veronese surface, the algebraic boundary of the convex hull is irreducible and of degree $6$, see Theorem~\ref{thm:veronese}. The algebraic boundary of the convex hull of a compact real Del Pezzo surface is the union of two quadratic cones over a nonsingular quadric in $\PP^3$, see Theorem~\ref{thm:del_pezzo}.
For generic Bordiga surfaces, the picture becomes rather involved: we expect that the algebraic boundary can have at most two irreducible components. We were not able to determine the degree exactly. However, one irreducible component has degree 384. Our results in this case are summarized in Theorem~\ref{thm:bordiga}. In this case, we focus only on the real forms arising as blow ups of the real projective plane in a real set of $10$ points.
The computation of the degrees rely on various results in (enumerative) complex algebraic geometry. Most important are the formulae by Kazarian from \cite{Kazarian:Multisingularities} and Vainsencher from \cite{Vainsencher:EnumerationTangentHyperplanes} counting singular plane algebraic curves with fixed degree and prescribed types of singularities.

\subsection{Setting}
Here, we begin with the setting in complex algebraic geometry. The discussion of the real geometry will be done for each surface case separately. Let $X \subset\C^d$ be a complex algebraic variety with a compact real part. We consider the convex body $\conv X \subset \R^d$, the convex hull of the real part of $X$. This is a semi-algebraic set by quantifier elimination. The (projective) Zariski closure of its topological boundary $\partial( \conv X)$ is thus a hypersurface, called the \emph{algebraic boundary} of $\conv X$, see~\cite{sinn:algbound}. We denote it by $\partial_a (\conv X)\subset\PP^d$. When $X$ is smooth, \cite[Theorem 1.1]{RanStu:ConvexHullVariety} provides all the irreducible components of $\partial_a (\conv X)$, that we now recall.

Let $X\subset \PP^d$ be a projective variety, not contained in any hyperplane. Fix an integer $k\leq d$. Define $X^{[k]}$ as the Zariski closure of the set
\begin{equation}
\{ u \in (\PP^d)^* \,|\, u^\perp \hbox{ is tangent to } X \hbox{ at } k \hbox{ linearly independent points  }\} \subset (\PP^d)^*
\end{equation}
of hyperplanes tangent to our variety at $k$ linearly independent points. These $k$ points are singularities in the intersection $u^\perp \cap X$, where $u^\perp = \{[x_0,\ldots,x_d]\in\PP^d \,|\, u_0 x_0 + \ldots + u_d x_d = 0\}$. 

We write $\conv X$ for the convex hull of the real points of $X$ in an affine chart $\R^4\subset \PP^4(\R)$. We assume that this real algebraic set is compact. Then by \cite[Theorem 1.1]{RanStu:ConvexHullVariety}
\begin{equation}\label{eq:formula_boundary_ch}
\partial_a (\conv X) \subset \bigcup_{k = 1}^d \left( X^{[k]} \right)^*.
\end{equation}
There are various ways in which one can improve the inclusion in \eqref{eq:formula_boundary_ch}. For instance, since $\partial_a (\conv X)$ is a hypersurface, we can get rid of the irreducible components of $\left( X^{[k]} \right)^*$ that have higher codimension. A necessary condition (based on Terracini's Lemma, see \cite[Section 1]{RanStu:ConvexHullVariety}) for $\left( X^{[k]} \right)^*$ to be a hypersurface is that the $k$-th secant variety of $X$ has at most codimension $1$, i.e., that $k\geq \left\lceil\frac{n}{\dim X + 1}\right\rceil$. Another issue could be that some of the components in the right hand side of \eqref{eq:formula_boundary_ch} do not have real points. In this case they do not contribute to the algebraic boundary either.
Our goal is to understand which are the relevant irreducible components of the algebraic boundary in the particular case of smooth surfaces in fourspace. We approach this question by analyzing smooth (or even generic) surfaces of fixed low degree and sectional genus. Since we want to compute convex hulls, we need surfaces with a compact real locus, and this can happen only if $\deg X$ is even.
Degree $2$ surfaces are embeddings of $\PP^1\times\PP^1$ in $\PP^3\subset \PP^4$, therefore they have been already investigated in \cite[Section 2.3]{RanStu:ConvexHullVariety}. 
The first interesting degree is then $4$;
here we find two families of smooth surfaces: the Veronese and the Del Pezzo surfaces. A Veronese surface has sectional genus $0$ and it is a projection into $\PP^4$ of the image of the Veronese map $\nu_2(\PP^2)\subset \PP^5$. A Del Pezzo surface has sectional genus $1$ and it is the complete intersection of two quadrics or, equivalently, the blow up of $\PP^2$ in five generic points. In degree $6$ we find Bordiga and K3 surfaces, with sectional genus $3$ and $4$ respectively. Bordiga surfaces are the blow up of $\PP^2$ in ten generic points, whereas K3 surfaces in fourspace are the complete intersection of a quadric and a cubic hypersurface. 
This is a classification as complex algebraic varieties. These surfaces can have other real forms. For Del Pezzo surfaces, we discuss the complete classification of the real forms. In the case of Bordiga surfaces, we only discuss the blow ups of the real projective plane in a real set of $10$ points because of the absence of a complete classification of their real forms.

Using \cite{Vainsencher:EnumerationTangentHyperplanes} or \cite{Kazarian:Multisingularities}, we can compute the degrees of the varieties $X^{[k]}$, as shown in Table \ref{table:degreesXk}. The two papers provide formulae to compute the number of singular curves on a surface, having certain types of singularities. The formulae cannot be applied to the case of the Veronese surface; this requires an ad hoc geometric study of the varieties $X^{[k]}$. 
\begin{table}[ht!]
    \renewcommand{\arraystretch}{1.7}
    \centering
    \begin{tabular}{c||c|c|c|c|}
    \textbf{surface} & \textbf{deg}$X^{[1]}$ & \textbf{deg}$X^{[2]}$ & \textbf{deg}$X^{[3]}$ & \textbf{deg}$X^{[4]}$ \\
    \hline
    \hline
    Veronese & $3$ & $6$ & $6$ & $\emptyset$ \\ 
    \hline
    Del Pezzo & $12$ & $26$ & $40$ & $40$ \\
    \hline
    Bordiga & $27$ & $235$ & $875$ & $1761$ \\
    \hline
    K3 & $42$ & $672$ & $5460$ & $25650$ \\
    \hline
    \end{tabular}
    \caption{The degrees of the $X^{[k]}$'s for low degree surfaces in $\PP^4$.}
    \label{table:degreesXk}
\end{table}
A general fact is that $X^{[1]}$ is the dual variety $X^*$, therefore by the biduality theorem \cite[Theorem 15.24]{Harris:AlgebraicGeometry},
$\left( X^{[1]} \right)^* = X$ is not a hypersurface in our framework. The values of $k$ for which $X^{[k]}$ might be relevant for us, are thus $k=2,3,4$.
K3 surfaces are the only irrational surfaces in the range that we are considering. There is a whole literature regarding the associated varieties $X^{[k]}$ when $X$ is a K3 surface (see for instance \cite{BruLel:SeveriK3}), but not enough for us to compute the degrees of $\left( X^{[k]} \right)^*$. Hence, we focus here on the rational cases of the Veronese, Del Pezzo, and Bordiga surfaces.
The strategy is to partition the numbers in Table \ref{table:degreesXk} with one summand for each irreducible component. We then investigate each irreducible component separately, in order to determine what will be relevant after dualization. When we are dealing with blow ups of the plane from a complex point of view, studying the family of hyperplanes tangent to $X$ at a certain number of points, translates into studying the family of curves in $\PP^2$ passing through the points that we blow up, and having prescribed singularities.

\paragraph{Kazarian and Vainsencher's numbers.}
Let $X$ be a surface and consider a linear system $|D|$ of divisors on $X$. Then \cite[Section 10]{Kazarian:Multisingularities} and \cite[Section 5]{Vainsencher:EnumerationTangentHyperplanes} provide the recipes for computing the number of singular curves in the linear system $|D|$ with prescribed singularities, and going through the correct number of points, in such a way that the answer is finite. For instance, consider the surface $X=\PP^2$ and suppose we want to compute the number of rational quartics in $X$. Denoting by $L$ the class of a line in $\PP^2$, we have $D=4L$. Plane quartics constitute a $\PP^{14}$. In order for the quartics to be rational, they need to have generically three nodes: these are our prescribed singularities. Each node condition drops the dimension of this variety by $1$, thus the variety of rational quartics has codimension $3$ in $\PP^{14}$. We want to compute the degree of this variety, so we impose $11$ linear conditions, namely that these curves pass through $11$ points in $\PP^2$ in general position. We obtain in this way a finite set, whose cardinality is computed by Kazarian and Vainsencher's formulae.
Vainsencher computes singular curves with a certain number of nodes, whereas Kazarian allows also other types of singularities. In particular, $A_1$ denotes a node and $A_2$ denotes a cusp. These will be our cases of interest. We will mainly talk about Kazarian's formulae, to avoid jumping between one paper and the other when we need to count cusps. There are four parameters to be considered
\begin{equation}\label{eq:Kaz_quantities}
\begin{aligned}
    &\mathsf{d} = D.D, && \mathsf{k} = D.K_X, \\
    &\mathsf{s} = K_X.K_X, && \mathsf{x} = \chi(X),
\end{aligned}
\end{equation}
where $K_X$ is the canonical divisor on $X$ and $\chi$ denotes the topological Euler characteristic. 
Here `\,.\,' denotes the intersection product of two divisors, defined on the Picard group $\Pic(X)$ of the surface $X$ \cite[Chapter 1]{Beauville:ComplexAlgebraicSurfaces}.
Then, by plugging such quantities in \cite[Theorem 10.1]{Kazarian:Multisingularities} one gets the desired numbers. The linear system $|D|$ is given by the embedding of $X$ as the linear system of hyperplane sections. We explain the notation $k \pi^*(L) - \sum E_i$ below.
Table \ref{table:divisors_parameters} computes \eqref{eq:Kaz_quantities} for our surfaces. Using these numbers one can get the degrees in Table \ref{table:degreesXk}:
\begin{equation}
    \deg X^{[k]} = N_{A_1^k},
\end{equation}
using the notation of \cite[Theorem 10.1]{Kazarian:Multisingularities} in which $N_{A_1^k}$ is the number of singular curves of $|D|$ with $k$ nodes.
\begin{table}[ht!]
    \renewcommand{\arraystretch}{1.7}
    \centering
    \begin{tabular}{c||c|c|c|c|c|c|}
    \textbf{surface} & $D$ & $K_X$ & $\mathsf{d}$ & $\mathsf{k}$ & $\mathsf{s}$ & $\mathsf{x}$ \\
    \hline
    \hline
    Veronese & $2L$ & $-3L$ & $4$ & $-6$ & $9$ & $3$ \\
    \hline
    Del Pezzo & $3\pi^*(L)-\sum_{i=1}^5 E_i$ & $-3\pi^*(L)+\sum_{i=1}^5 E_i$ & $4$ & $-4$ & $4$ & $8$ \\
    \hline
    Bordiga & $4\pi^*(L)-\sum_{i=1}^{10} E_i$ & $-3\pi^*(L)+\sum_{i=1}^{10} E_i$ & $6$ & $-2$ & $-1$ & $13$ \\
    \hline
    K3 & $H$ & $0$ & $6$ & $0$ & $0$ & $24$ \\
    \hline
    \end{tabular}
    \caption{The quantities required for Kazarian's formulae. Here $H$ is the class of a hyperplane in $\PP^4$, which identifies a smooth curve of genus $4$.}
    \label{table:divisors_parameters}
\end{table}     

\section{Veronese}\label{sec:veronese}

Consider the Veronese surface $Y$ in $\PP^5$, i.e., the image of the second Veronese map
\begin{align}
    \nu_2 : \PP^2 &\to \PP^5 \\
    [ t_0, t_1, t_2] &\mapsto [ t_0^2,  t_0  t_1,  t_0  t_2,  t_1^2,  t_1  t_2,  t_2^2].
\end{align}
Its secant variety is a proper hypersurface in $\PP^5$ \cite{AleHir:PolynomialInterpolation}. Therefore, by projecting $Y$ onto $\PP^4$ from a point that does not belong to the secant variety, we obtain a smooth irreducible surface $X\subset \PP^4$. We only consider the real surfaces obtained as the projection from a real point in $\PP^5$ and no other real forms of this complex algebraic surface. We now make this more precise.

\subsection[The complex picture]{The $\C$omplex picture}

We treat $\PP^5$ as the projective space of the space of $3\times 3$ symmetric matrices. Using this point of view, one can prove that the Veronese surface $Y$ is the variety of rank one matrices. We will first study $Y$ and its associated varieties $Y^{[k]}$, and then we will take care of the projection. In this setting, we have the pairing of two matrices consisting of the trace of their product; we will generally denote by $A$ matrices in the primal space and by $B$ matrices in the dual space. Given a symmetric matrix $B\in (\PP^5)^*$, the corresponding hyperplane in the primal space is
\begin{equation}
    H_B = \{ A \in \PP^5 \,|\, \trace (BA) = 0 \}.
\end{equation}
On the other hand, every $3\times 3$ symmetric matrix $B$ corresponds to the plane conic $C_B = \{ t\in \PP^2 \,|\,  t^T B  t = 0\}\subset \PP^2$. This plane curve is isomorphic via $\nu_2$ to the hyperplane section $H_B\cap Y$, since $ t^T B  t = \trace (B ( t  t^T))$ and clearly $ t  t^T \in Y$. So in order to study hyperplane sections of the Veronese surface, we can study plane conics. There are three distinct cases: if $\rank B = 3$, then $C_B$ is a smooth conic; if $\rank B = 2$, then $C_B$ is the union of two lines;
if $\rank B = 1$, then $C_B$ is a double line. Hence, the number of singular points of $C_B$, and therefore of the corresponding hyperplane section of the Veronese, can be $0$, $1$, or infinity. In the last case, the hyperplane section is a double conic. This shows that
\begin{equation}
    Y^{[2]} = Y^{[3]} = \{ B\in (\PP^5)^* \,|\, \rank B = 1\} = \nu_2((\PP^2)^*)
\end{equation}
is again a Veronese surface (in the dual space). Then, $\deg \left(Y^{[2]}\right)^* = 6$. Since a conic is always planar, $Y^{[4]} = \emptyset$.

We now focus on the projection onto $\PP^4$. The secant variety of $Y$ is the set of $3\times 3$ symmetric matrices of rank $2$. Thus, fix a matrix $A \in \PP^5$ of full rank and let $\pi_A : \PP^5 \dashrightarrow \PP^4$ be the projection centered at $A$. Denote by $X_A = \pi_A (Y)$ our surface of interest in $\PP^4$. The projection $\pi_A$ corresponds, in the dual space, to the intersection with the hyperplane $H_A$. Therefore, the variety $X_A^{[2]} = X_A^{[3]}$ is given by $Y^{[2]} \cap H_A = \nu_2( (\PP^2)^* ) \cap H_A$. By the reasoning above, it is isomorphic to the plane conic $C_A \subset (\PP^2)^*$. Since $A$ is full rank, the variety $\nu_2(C_A) \subset (\PP^4)^*$ is the rational normal curve. Therefore,
\begin{equation}
    \dim X_A^{[2]} = 1, \qquad \deg X_A^{[2]} = 4.
\end{equation}
The dual variety $\left( X_A^{[2]} \right)^* \!\!\!=\! \left( X_A^{[3]} \right)^* \!\!\!\subset \!\PP^4$ is a sextic hypersurface.
This completes the analysis of the complex structure.

\subsection[The real picture]{The $\R$eal picture}
Regarding the real parts of these varieties and the possible contribution to the algebraic boundary of $\conv X_A$, we need to distinguish two cases in terms of the signature of the projection center $A$. Since $A$ must be of rank $3$ in order for $X_A\subset \PP^4$ to be smooth, its signature can be either $(3,0)$ or $(2,1)$, up to a global sign. Here $(p,n)$ specifies the number $p$ of positive eigenvalues of $A$ and the number $n$ of negative eigenvalues.

Firstly, we assume that $A$ has signature $(3,0)$, i.e., it is positive definite. Pick any point $\overline{M}\in \PP^4$; then $\pi_A^{-1}(\overline{M}) = \{\mu M + \lambda A | [\mu,\lambda]\in \PP^1\}$. For $\mu = 1$ and a sufficiently large real $\lambda$, the matrix $M+\lambda A$ is positive definite as well, and therefore it is a combination of three rank one matrices: $M+\lambda A = \frac{1}{3} (v_1 v_1^T + v_2 v_2^T + v_3 v_3^T)$. Hence $\overline{M}\in \conv X_A$, i.e., $\conv X_A = \R^4$. In this case, the conic $C_A$ and the rational normal curve $X_A^{[2]}$ are real varieties with empty real locus. The real points of $\left( X_A^{[2]} \right)^*$ must then be singular, so they do not form a hypersurface. Therefore, there are no hypersurfaces that can describe $\partial_a \conv X_A$. The geometric picture in this case is that $\conv X_A $ is the a projection of the cone of positive semidefinite matrices from an interior point and therefore $\R^4$. 

Assume now that the center of the projection $A$ has signature $(2,1)$. In this case the rational normal curve $X_A^{[2]}$ has a non-empty real locus, and its dual hypersurface has real smooth points. Geometrically, $\conv X$ is the projection of the cone of positive semidefinite matrices, from a point outside it. We summarize our findings in the following theorem.

\begin{theorem}\label{thm:veronese}
Let $X_A\subset \PP^4$ be the projection of $\nu_2(\PP^2)\subset \PP^5$ with center the full-rank $3\times 3$ symmetric matrix $A$. Then,
\begin{itemize}
    \item if $A$ has signature $(3,0)$, then $\conv X_A = \R^4$;
    \item if $A$ has signature $(2,1)$, then $\partial_a (\conv X_A)$ is a hypersurface of degree $6$.
\end{itemize}
\end{theorem}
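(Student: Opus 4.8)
The plan is to dispatch the two signatures separately, feeding the complex description of the $X_A^{[k]}$ from the previous subsection into the Ranestad--Sturmfels inclusion \eqref{eq:formula_boundary_ch}.

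\emph{The case $A\succ 0$.} Here I would run the computation already sketched before the statement. For an arbitrary $\overline M\in\R^4$ with affine lift $M$, the fibre $\pi_A^{-1}(\overline M)=\{\,[\mu M+\lambda A]:[\mu,\lambda]\in\PP^1\,\}$ contains, taking $\mu=1$ and $\lambda\gg 0$, the positive definite matrix $M+\lambda A$, which by the spectral theorem is a positive combination of rank-one positive semidefinite matrices, that is, of matrices representing points of the real Veronese $Y(\R)=\nu_2(\PP^2(\R))$. Geometrically $\conv X_A$ is the image of the cone of positive semidefinite matrices under the projection from the interior point $[A]$, and the computation just given shows this image is all of $\R^4$. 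Hence $\conv X_A=\R^4$ and there is no algebraic boundary to speak of.

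\emph{The case where $A$ has signature $(2,1)$.} First I would check the hypotheses needed to invoke \eqref{eq:formula_boundary_ch}. The set $X_A(\R)=\pi_A(\nu_2(\PP^2(\R)))$ is compact, being the continuous image of a compact set under $\pi_A$, which is regular there since $[A]\notin Y$ (as $A$ has full rank); and it spans $\PP^4(\R)$, because $Y(\R)$ spans $\PP^5(\R)$ and projection preserves spanning. Since $A$ is indefinite there exists a definite symmetric matrix $B$ with $\trace(BA)=0$; the corresponding hyperplane of $\PP^4(\R)$ misses $X_A(\R)$ (its preimage conic in $\PP^2(\R)$ is empty), so $X_A(\R)$ lies in an affine chart in which it is bounded and affinely spanning. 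Therefore $\conv X_A$ is a full-dimensional compact convex body, and \eqref{eq:formula_boundary_ch} gives $\partial_a(\conv X_A)\subseteq\bigcup_{k=1}^4 (X_A^{[k]})^{*}$. From the complex picture: $(X_A^{[1]})^{*}=X_A$ by biduality, a surface and not a hypersurface; $X_A^{[2]}=X_A^{[3]}$ is the rational normal quartic curve, whose dual is the sextic hypersurface computed above; and $X_A^{[4]}=\emptyset$. As $\partial_a(\conv X_A)$ is pure of codimension one, each of its irreducible components is contained in a codimension-one component of the union, hence in $(X_A^{[2]})^{*}$; so $\partial_a(\conv X_A)\subseteq (X_A^{[2]})^{*}$.

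It remains to upgrade this inclusion to an equality. The hypersurface $(X_A^{[2]})^{*}$ is the projective dual of the irreducible curve $X_A^{[2]}$ and is therefore irreducible, of degree $6$; meanwhile $\partial_a(\conv X_A)$ is a nonempty union of irreducible hypersurfaces---nonempty because $\conv X_A$, being full-dimensional and bounded, has nonempty boundary. A nonempty union of irreducible hypersurfaces contained in a single irreducible hypersurface must equal it, so $\partial_a(\conv X_A)=(X_A^{[2]})^{*}$, of degree $6$. The step I expect to be the main obstacle is precisely this last one: ruling out the degenerate possibilities for $\conv X_A$ (filling all of $\R^4$, being lower-dimensional, or having a boundary whose Zariski closure is only a proper subvariety of the irreducible sextic). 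This is exactly where the signature of $A$ enters in an essential way, through the existence of an affine chart in which $X_A(\R)$ is simultaneously bounded and affinely spanning, and it is the one part of the argument that does not follow formally from the complex analysis and that genuinely separates the two cases of the theorem.
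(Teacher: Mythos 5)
Your proposal is correct and takes essentially the same route as the paper: the spectral-theorem argument for the positive-definite case, and, for signature $(2,1)$, the identification $X_A^{[2]}=X_A^{[3]}$ as a rational normal quartic with irreducible sextic dual together with the Ranestad--Sturmfels inclusion. You are in fact more careful than the paper's own (rather terse) discussion at two points that it treats as evident: you explicitly produce an affine chart in which $X_A(\R)$ is compact by exhibiting a positive definite $B$ with $\trace(BA)=0$, and you spell out the upgrade from containment to equality via the fact that $\partial_a(\conv X_A)$ is a nonempty union of irreducible hypersurfaces sitting inside the single irreducible sextic $(X_A^{[2]})^*$.
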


\begin{example}
As an explicit example of the $A=(2,1)$ case, consider the projection of the Veronese surface onto $\PP^4$ centered at $A = [0,0,-1,2,0,0]\in\PP^5$, which corresponds to the matrix $$
\begin{bmatrix}
0 & 0 & -\frac{1}{2} \\
0 & 2 & 0 \\
-\frac{1}{2} & 0 & 0
\end{bmatrix}$$
having signature $(2,1)$. Then, $X_A$ is the image of the map
\begin{align}
    \nu_2 : \PP^2 &\to \PP^4 \\
    [t_0,t_1,t_2] &\mapsto [t_0^2, -2 t_0 t_1, 2 t_0 t_2 + t_1^2, -2 t_1 t_2, t_2^2].
\end{align}
Its ideal is generated by the following $7$ cubic polynomials:
\begin{gather}
    x_3^3-4 x_2 x_3 x_4+8 x_1 x_4^2,\\
    x_2 x_3^2-4 x_2^2 x_4+2 x_1 x_3 x_4+16 x_0 x_4^2,\\
    x_1 x_3^2-4 x_1 x_2 x_4+8 x_0 x_3 x_4,\\
    x_0 x_3^2-x_1^2 x_4,\\
    x_1^2 x_3-4 x_0 x_2 x_3+8 x_0 x_1 x_4,\\
    x_1^2 x_2-4 x_0 x_2^2+2 x_0 x_1 x_3+16 x_0^2 x_4,\\
    x_1^3-4 x_0 x_1 x_2+8 x_0^2 x_3.
\end{gather}
We identify $\R^4\subset \PP^4(\R)$ with the affine chart whose hyperplane at infinity is given by the equation $2 x_0 + x_1 + 2 x_2 + 4 x_3 = 0$. Since 
\begin{equation}
    2 t_0^2 - 2 t_0 t_1 + 2 ( 2 t_0 t_2 + t_1^2 ) + 4 t_2^2 = ( t_0 - t_1 )^2 + (t_0 + 2 t_2)^2 + t_1^2
\end{equation}
is a sum of three squares, then the Veronese surface in this affine chart does not have real points at infinity, hence it is compact. This implies that its convex hull is a convex body and by Theorem \ref{thm:veronese} its algebraic boundary is the variety $\left( X_A^{[2]} \right)^*$ of degree $6$ defined as the zero locus of
\begin{gather}
    x_1^2 x_2^2 x_3^2-4 x_0 x_2^3 x_3^2-4 x_1^3 x_3^3+18 x_0 x_1 x_2 x_3^3-27 x_0^2 x_3^4-4 x_1^2 x_2^3 x_4\\
    +16 x_0 x_2^4 x_4+18 x_1^3 x_2 x_3 x_4-80 x_0 x_1 x_2^2 x_3 x_4-6 x_0 x_1^2 x_3^2 x_4+144 x_0^2 x_2 x_3^2 x_4\\
    -27 x_1^4 x_4^2+144 x_0 x_1^2 x_2 x_4^2-128 x_0^2 x_2^2 x_4^2-192 x_0^2 x_1 x_3 x_4^2+256 x_0^3 x_4^3
\end{gather}
which is dual to the standard quartic rational normal curve in $(\PP^4)^*$ parametrized by the map $[s_0,s_1]\mapsto[s_0^4,s_0^3s_1,s_0^2s_1^2,s_0s_1^3,s_1^4]$, for $[s_0,s_1]\in\PP^1$.
\end{example}

\begin{example}
For the $A=(3,0)$ case, let us pick the identity matrix, namely the point $[1,0,0,1,0,1]\in\PP^5$, and consider the projection of the Veronese surface given by
\begin{align}
    \nu_2 : \PP^2 &\to \PP^4 \\
    [t_0,t_1,t_2] &\mapsto [t_0^2 - t_1^2, t_0^2 - t_2^2, t_0 t_1, t_0t_2, t_1t_2].
\end{align}
The image of $\nu_2$ is our surface of interest $X_A$, namely the projection of the standard Veronese surface to $\PP^4$. The ideal of $X_A$ is generated by 
\begin{gather}
    x_2 x_3^2-x_0 x_3 x_4-x_2 x_4^2,\\
    x_2^2 x_3-x_1 x_2 x_4-x_3 x_4^2,\\
    x_1 x_2 x_3-x_0 x_1 x_4-x_2^2 x_4+x_4^3,\\
    x_0 x_2 x_3-x_0 x_1 x_4-x_3^2 x_4+x_4^3,\\
    x_0^2 x_3-x_0 x_1 x_3-x_3^3+x_0 x_2 x_4+x_3 x_4^2,\\
    x_0 x_1 x_2-x_1^2 x_2+x_2^3-x_1 x_3 x_4-x_2 x_4^2,\\
    x_0^2 x_1-x_0 x_1^2+x_0 x_2^2-x_1 x_3^2-x_0 x_4^2+x_1 x_4^2.
\end{gather}
The curve $X_A^{[2]}$ is a rational normal quartic with no real points. It is the zero locus of the ideal with generators
\begin{gather}
    x_2 x_3-2 x_0 x_4-2 x_1 x_4,\\
    2 x_0 x_3+x_2 x_4,\\
    2 x_1 x_2+x_3 x_4,\\
    4 x_1^2+x_3^2+x_4^2,\\
    4 x_0 x_1-x_4^2,\\
    4 x_0^2+x_2^2+x_4^2.
\end{gather}
The corresponding dual hypersurface $\left( X_A^{[2]} \right)^*$, is defined by the sextic:
\begin{align}
    s(x_0,\ldots,x_4) =& \; x_0^4 x_1^2-2 x_0^3 x_1^3+x_0^2 x_1^4+2 x_0^3 x_1 x_2^2+2 x_0^2 x_1^2 x_2^2-8 x_0 x_1^3 x_2^2+4 x_1^4 x_2^2+x_0^2 x_2^4 \\ &+8 x_0 x_1 x_2^4
    -8 x_1^2 x_2^4+4 x_2^6+4 x_0^4 x_3^2-8 x_0^3 x_1 x_3^2+2 x_0^2 x_1^2 x_3^2+2 x_0 x_1^3 x_3^2\\ 
    &+20 x_0^2 x_2^2 x_3^2 -38 x_0 x_1 x_2^2 x_3^2
    +20 x_1^2 x_2^2 x_3^2+12 x_2^4 x_3^2-8 x_0^2 x_3^4+8 x_0 x_1 x_3^4+x_1^2 x_3^4\\
    &+12 x_2^2 x_3^4+4 x_3^6+8 x_0^3 x_2 x_3 x_4-12 x_0^2 x_1 x_2 x_3 x_4-12 x_0 x_1^2 x_2 x_3 x_4+8 x_1^3 x_2 x_3 x_4\\
    &+36 x_0 x_2^3 x_3 x_4-72 x_1 x_2^3 x_3 x_4-72 x_0 x_2 x_3^3 x_4
    +36 x_1 x_2 x_3^3 x_4-2 x_0^3 x_1 x_4^2\\
    &+8 x_0^2 x_1^2 x_4^2-2 x_0 x_1^3 x_4^2+2 x_0^2 x_2^2 x_4^2-2 x_0 x_1 x_2^2 x_4^2+20 x_1^2 x_2^2 x_4^2
    +12 x_2^4 x_4^2\\
    &+20 x_0^2 x_3^2 x_4^2-2 x_0 x_1 x_3^2 x_4^2+2 x_1^2 x_3^2 x_4^2-84 x_2^2 x_3^2 x_4^2+12 x_3^4 x_4^2+36 x_0 x_2 x_3 x_4^3
    \\
    &+36 x_1 x_2 x_3 x_4^3+x_0^2 x_4^4-10 x_0 x_1 x_4^4+x_1^2 x_4^4+12 x_2^2 x_4^4+12 x_3^2 x_4^4+4 x_4^6.
\end{align}
A computation in \texttt{Julia} shows that this polynomial is a sum of squares. For instance:
\begin{align}
    s(x_0,\ldots,x_4) = &\;\, 4 (-x_1^2 x_2+x_0 x_1 x_2-2 x_2 x_3^2+x_2^3-x_1 x_3 x_4+2 x_0 x_3 x_4+x_2 x_4^2)^2 \\
    &+ 4 (x_0^2 x_3-x_0 x_1 x_3+2 x_2^2 x_3-x_3^3+x_0 x_2 x_4-2 x_1 x_2 x_4-x_3 x_4^2)^2 \\
    &+ (2 x_0 x_1 x_4 - x_0 x_2 x_3 - x_1 x_2 x_3 + x_2^2 x_4 + x_3^2 x_4 - 2 x_4^3)^2 \\
    &+ (x_0^2 x_1 - x_0 x_1^2 + x_0 x_2^2 - x_0 x_4^2 - x_1 x_3^2 + x_1 x_4^2)^2 \\
    &+ 3 (x_0 x_2 x_3-x_1 x_2 x_3+x_2^2 x_4-x_3^2 x_4)^2 \\
    &+ 12 (x_2 x_3^2-x_0 x_3 x_4-x_2 x_4^2)^2 \\
    &+ 12 (x_2^2 x_3-x_1 x_2 x_4-x_3 x_4^2)^2. 
\end{align}
The seven cubic equations in this sum-of-squares decomposition generate the ideal of the Veronese surface $X_A \in \PP^4$. This surface has smooth real points and is the (Zariski closure of the) real zero locus of $s$. Therefore, the real points of $\left( X_A^{[2]} \right)^*$ do not form a hypersurface. Moreover, it is one of the two irreducible components of the singular locus of the sextic hypersurface defined by $s$ in $\PP^4$. 
\end{example}

\section{Del Pezzo}\label{sec:del_pezzo}
In this section we study the case in which $X\subset \PP^4$ is a Del Pezzo surface of degree $4$. Over $\C$ this type of surfaces arises as a complete intersection of two quadratic threefolds. Alternatively, $X$ can be realized as a complex algebraic surface as the blow up $\mathrm{Bl}_{p_1,\ldots,p_5} \PP^2$ of five points in the (complex) projective plane, such that no three points lie on a line \cite[Proposition 8.1.25]{Dolgachev:ClassicalAlgebraicGeometry}. Such a surface can then be embedded in $\PP^4$ via its anticanonical linear system $D = 3\pi^*(L) - \sum_{i=1}^5 E_i$, where $L$ is the class of a line in $\PP^2$, $\pi^*(L)$ is its pullback on $\mathrm{Bl}_{p_1,\ldots,p_5} \PP^2$, and $E_i$ is the exceptional divisor corresponding to $p_i$ (see \cite[Chapter 8]{Dolgachev:ClassicalAlgebraicGeometry} for more).
We study the real picture for all possible real forms of Del Pezzo surfaces of degree $4$ in subsection~\ref{subsec:delPezzoReal} below. The real forms have been classified for instance in \cite{russo:delPezzo}.

\subsection[The complex picture]{The $\C$omplex picture}

The blow up construction will be used also in the next sections, so we explain the idea in more detail. The space of plane cubics through five generic points is a $4$-dimensional projective space. We fix a basis $\{q_0,\ldots,q_4\}\subset \C[x_0,x_1,x_2]_{3}$ for such a space. Then $D$, i.e., the class of the strict transform of a plane cubic passing simply by $p_1,\ldots,p_5$, induces the following map:
\begin{align}
\varphi : \PP^2 &\dashrightarrow \PP^4 \\
x &\mapsto [q_0(x), \ldots , q_4(x)].
\end{align}
The base locus coincides with the five points $p_1,\ldots,p_5$. The Del Pezzo surface $X$ is the closure of the image of $\varphi$. Hence, $\varphi$ is birational to $X$. Let us now consider the blow up of $\PP^2$ in the $p_i$'s. We have the following:
\begin{equation}
\begin{tikzcd}
\textrm{Bl}_{p_1, \ldots ,p_5} \PP^2 \arrow[dr, "\widetilde{\varphi}"] \arrow[d, "\pi"] & \\
\PP^2 \arrow[r, dashed, "\varphi"] & X \subset \PP^4
\end{tikzcd}
\end{equation}
Here $\widetilde{\varphi}$ is an isomorphism between the blow up and $X$. Notice that the image of an exceptional divisor $E_i \subset \textrm{Bl}_{p_1, \ldots ,p_5} \PP^2$ is a line in $\PP^4$, that with some abuse of notation we will keep calling $E_i$.
Consider a hyperplane $u^\perp$ in $\PP^4$, identified by a vector $u$, namely
\begin{equation}
    u^\perp = \{ [x_0,\ldots ,x_4] \in \PP^4 \, |\, u_0 x_0 + \ldots + u_4 x_4 = 0 \}.
\end{equation}
Such a hyperplane identifies two curves in two different spaces. On the one hand, we have the hyperplane section $C_u = u^\perp \cap X$ in $\PP^4$, a curve in a hyperplane of $\PP^4$. On the other hand, we have the cubic $Q_u = \{ y\in \PP^2 \,|\, q_u(y) = u_0 q_0(y) + \ldots + u_4 q_4(y) = 0\}$ in $\PP^2$ which is the pullback of the hyperplane section via $\varphi$. We denote the class of $C_u$ (as a divisor on $X$) by $u^\perp . X$. 
Then, if $\widehat{Q}_u$ is the strict transform of $Q_u$, the following relation in $\Pic(\textrm{Bl}_{p_1, \ldots ,p_5} \PP^2)$ holds:
\begin{equation}\label{eq:multiplicities_singularities}
    \widehat{Q}_u + \sum_{i=1}^{5} (\textrm{mult}_{p_i} Q_u - 1) E_i =  \widetilde{\varphi}^*(u^\perp . X).
\end{equation}
Informally, this means that we can compare singularities of $C_u$ and $Q_u$. Let us denote for simplicity $\widetilde{Q}_u = \widehat{Q}_u + \sum_{i=1}^{5} (\textrm{mult}_{p_i} Q_u - 1) E_i$. Notice that $E_i \subset C_u$ if and only if $\textrm{mult}_{p_i} Q_u \geq 2$. For instance, $E_1 \subset C_u$, i.e., $E_1 \subset u^\perp$, if and only if $Q_u$ is singular at $p_1$.
Therefore, instead of studying hyperplanes tangent to $X$ at a certain number of points, we can focus on curves in $\PP^2$ passing through $p_1,\ldots,p_5$, with prescribed singularities.
Using this point of view, we want to analyze the irreducible components of the varieties $X^{[k]}\subset (\PP^4)^*$ whose duals are hypersurfaces. First, we discuss the various irreducible components. The findings in the cases $k=2,3,4$ are summarized in Propositions \ref{prop:DelPezzo_2}, \ref{prop:DelPezzo_3}, \ref{prop:DelPezzo_4}. 

Since a node at a base point corresponds to two singularities of $C_u$, whereas a node outside the base locus gives one singularity of $C_u$, our following case analysis for $X^{[k]}$ is based on partitions of $k$ into parts that are restricted to be $1$ or $2$.

\subsubsection{The irreducible components for k=2}
Points of $X^{[2]}\subset (\PP^4)^*$ are hyperplanes in $\PP^4$ tangent to $X$ at $2$ points. Every such $u\in X^{[2]}$ identifies a plane cubic curve $Q_u$ as explained above, such that $\widetilde{Q}_u$ has $2$ singular points. We will see that the family of these curves is two-dimensional, as expected, so that $X^{[2]}$ is a surface. There are four configurations of plane cubics through $p_1,\ldots,p_5$ in which $\widetilde{Q}_u$ has $2$ nodes.
The only partitions of $2$ are $2 = 1\cdot 2$ and $2 = 2\cdot 1$. The first partition (meaning a node at a base point) leads to case (A), whereas the second partition (meaning two nodes in $\PP^2$ outside of the base locus) results in cases (B), (C), and (D).
We describe them in detail in the following paragraphs. We point out that by \emph{conic} we always mean a smooth quadric in $\PP^2$.

\paragraph{(A): irreducible cubic.}
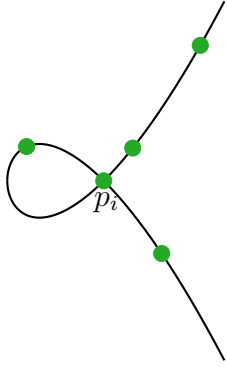
\begin{figure}[ht]
    \centering
    \begin{tikzpicture}[scale=1]
\begin{axis}[
width=2in,
height=2in,
scale only axis,
xmin=-2,
xmax=2,
ymin=-2,
ymax=2,
ticks = none, 
ticks = none,
axis background/.style={fill=white},
axis line style={draw=none} 
]

\draw[thick,variable=\t,domain=0:1.5,samples=50]
  plot ({\t^2-1},{\t*(\t^2-1)});
  
\draw[thick,variable=\t,domain=0:1.5,samples=50]
  plot ({\t^2-1},{-\t*(\t^2-1)});

\addplot[only marks,mark=*,mark size=3pt,mygreen,
]  coordinates {
    (0,0) (1,1.4142) (0.3,0.3420) (0.6,-0.7589) (-0.8,0.3577) 
};

\node[black] (P) at (axis cs:0.03,-0.22) {$p_i$};

\end{axis}
\end{tikzpicture}
    \caption{Irreducible cubic curve through five fixed points, with a node at $p_i$.}
    \label{fig:X2_caseA}
\end{figure}
Requiring that a plane cubic curve has two nodes implies that it is reducible. Otherwise, the intersection of the cubic with the line through the two nodes would contradict B\'ezout's Theorem. Therefore, the only way to get two singularities on $\widetilde{Q}_u$ from an irreducible cubic $Q_u$ is to put a node at one of the $p_i$'s, see Figure \ref{fig:X2_caseA}. This implies that $C_u$, up to isomorphism, is the union of $\widehat{Q}_u$ and one of the exceptional lines. Having a node at a fixed point is a linear condition on the coefficients of the cubic.
In each of the five cases, corresponding to the choice of one of the $p_i$'s, we get a $\PP^2\subset (\PP^4)^*$ of such cubics. Therefore, the contribution of these irreducible components to the degree of $X^{[2]}$ is $1$ each and so $5\cdot 1 = 5$ in total.

\paragraph{(B): conic $+$ line.}
The first possible way for a reducible plane cubic to have two nodes, is to force the conic to go through the five points and take any line, see Figure \ref{fig:X2_caseBCD}, left. There is just one way to do so, since there is only one conic through five points. Hence, the irreducible component of $X^{[2]}$ whose points identify this type of plane cubics, is a $\PP^2\subset (\PP^4)^*$ and therefore of degree $1$.
\begin{figure}[ht]
    \centering
    \begin{tikzpicture}[scale=1]
\begin{axis}[
width=2in,
height=2in,
scale only axis,
xmin=-2,
xmax=2,
ymin=-2,
ymax=2,
ticks = none, 
ticks = none,
axis background/.style={fill=white},
axis line style={draw=none} 
]

\draw[thick,rotate around={-20:(0,0)}] (0,0) ellipse (1.5 and 1);
  
\draw[thick] (-2,-1) -- (1.3,1.3);

\addplot[only marks,mark=*,mark size=3pt,mygreen,
]  coordinates {
    (0.2,0.98) (1.44,-0.2) (0.9,0.63) (-0.5,-0.86) (-1.44,0.3577)
};

\end{axis}
\end{tikzpicture}
    \begin{tikzpicture}[scale=1]
\begin{axis}[
width=2in,
height=2in,
scale only axis,
xmin=-2,
xmax=2,
ymin=-2,
ymax=2,
ticks = none, 
ticks = none,
axis background/.style={fill=white},
axis line style={draw=none} 
]

\draw[thick,rotate around={-20:(0,0)}] (0,0) ellipse (1.5 and 1);
  
\draw[thick] (-2,-1) -- (1.3,1.3);

\addplot[only marks,mark=*,mark size=3pt,mygreen,
]  coordinates {
    (0.2,0.98) (1.44,-0.2) (-1.44,0.3577)
    (0,0.3939) (-1.5,-0.6515)
};

\end{axis}
\end{tikzpicture}
    \begin{tikzpicture}[scale=1]
\begin{axis}[
width=2in,
height=2in,
scale only axis,
xmin=-2,
xmax=2,
ymin=-2,
ymax=2,
ticks = none, 
ticks = none,
axis background/.style={fill=white},
axis line style={draw=none} 
]

\draw[thick,rotate around={-20:(0,0)}] (0,0) ellipse (1.5 and 1);
  
\draw[thick] (-2,-1) -- (1.3,1.3);

\addplot[only marks,mark=*,mark size=3pt,mygreen,
]  coordinates {
    (0.2,0.98) (1.44,-0.2) (-0.5,-0.86) (-1.44,0.3577)
    (0,0.3939)
};

\end{axis}
\end{tikzpicture}
    \caption{Left: a conic through five points and a line. Center: a conic through three points and a line through two points. Right: a conic through four points and a line through one point.}
    \label{fig:X2_caseBCD}
\end{figure}
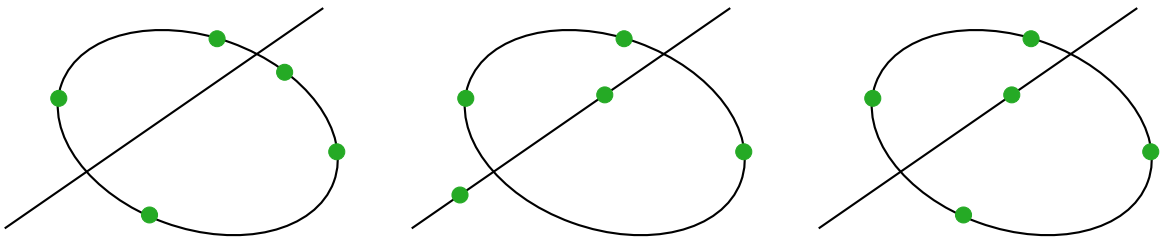

\paragraph{(C): conic $+$ line.}
Another possibility is to fix the line instead of the conic. Choose two of the five points (this can be done in $\binom{5}{2}=10$ ways), force the line to go through them, and take any conic through the remaining three points (Figure \ref{fig:X2_caseBCD}, center). Again, for every choice of the two points, the associated component of $X^{[2]}$ is a $\PP^2 \subset (\PP^4)^*$, hence the sum of their degrees is $10\cdot 1 = 10$.

\paragraph{(D): conic $+$ line.}
The last possible case is obtained by imposing that the conic goes through four points and the line through one, as in Figure \ref{fig:X2_caseBCD}, right. This can be done in $5$ different ways, and every time we get a copy of $\PP^1\times\PP^1$. Therefore, the degree of this type of  components is $5\cdot 2 = 10$.

This case distinction is exhaustive: there are no other ways for $\widetilde{Q}_u$ to have two singularities. In fact, if $Q_u$ is irreducible, then it is forced to have just one singularity by B\'ezout's Theorem, and this is our case (A). On the other hand, if $Q_u$ is reducible, it can be either the union of a conic and a line, or the union of three lines. In the first case, the intersection of conic and line makes two nodes, as required. Since the union of the irreducible components has to contain the $5$ points $p_1,\ldots,p_5$ and the line can contain at most $2$, the conic can contain $5$, $4$, or $3$ of them. These are precisely the cases (B), (D), (C). Finally, if $Q_u$ is the union of three lines, then it must have at least three nodes. Therefore this case does not contribute to $X^{[2]}$.
Summing up over all the irreducible components that we have just described, we get that $\deg X^{[2]} = 5+1+10+10 = 26$ as predicted in Table \ref{table:degreesXk}.

We now determine for which of these irreducible components, the dual varieties are hypersurfaces: this does not happen when the component is $\PP^2\subset (\PP^4)^*$. Therefore, the only relevant case is (D).
The dual of a smooth quadric surface, for us $\PP^1\times \PP^1$ naturally embedded in a hyperplane of $(\PP^4)^*$, has degree $2$ \cite[Proposition 2.9]{EisHar:3264}: it is a quadratic cone with vertex the hyperplane that contains $\PP^1\times \PP^1$. Hence, by dualizing these $\PP^1\times \PP^1$, we get the union of the five singular quadrics of the pencil defined by $X$. 
More precisely, $X$ being the complete intersection of two quadratic hypersurfaces $V_0,V_\infty \subset \PP^4$, we can construct the associated pencil of quadrics that they generate:
\begin{equation}
    \mathscr{L} = \{\lambda V_0 + \mu V_\infty \,|\, [\lambda,\mu] \in \PP^1 \}.
\end{equation}
Every element of $\mathscr{L}$ is a quadric in $\PP^4$, which is therefore represented by a $5\times 5$ matrix with linear homogeneous entries in $\lambda, \mu$. The determinant of this matrix has five zeros, which are isolated exactly when $X$ is smooth. These values of $[\lambda,\mu]$ define the five singular quadrics $V_1,\ldots,V_5$ of the pencil $\mathscr{L}$.
To see that these constitute $\left( X^{[2]}_{(D)} \right)^*$, note that $\PP^1\times\PP^1$ is contained in a $\PP^3$ inside $(\PP^4)^*$. Hence, its dual variety is a cone of degree $2$ containing $X$: this is by definition one of the $V_i$'s. 
We summarize our findings on $X^{[2]}$ and $\left( X^{[2]} \right)^*$.

\begin{proposition}\label{prop:DelPezzo_2}
Let $X\subset\PP^4$ be a smooth Del Pezzo surface, arising as the complete intersection of two generic quadratic hypersurfaces $V_0, V_\infty$. Then, $X^{[2]}\subset (\PP^4)^*$ is the union of $21$ irreducible components, $16$ of which are copies of $\PP^2$. The remaining $5$ are the singular quadrics in the pencil generated by $V_0, V_{\infty}$. Therefore, $\deg X^{[2]} = 26$.\\
The dual variety $\left( X^{[2]} \right)^*\subset \PP^4$ is the union of $16$ copies of $\PP^1$ and $5$ quadratic hypersurfaces, namely the singular quadrics of the pencil. 
\end{proposition}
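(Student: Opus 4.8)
The plan is to use the blow-up dictionary \eqref{eq:multiplicities_singularities} to reduce the description of $X^{[2]}$ to an enumeration of plane cubics through $p_1,\dots,p_5$ carrying two singular points, and then to dualize each irreducible component of the resulting surface.

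First I would set up the translation: via the isomorphism $X\cong \mathrm{Bl}_{p_1,\dots,p_5}\PP^2$ and \eqref{eq:multiplicities_singularities}, a point $u\in X^{[2]}$ corresponds to a cubic $Q_u$ through the five base points whose total transform $\widetilde{Q}_u$ acquires exactly two singular points, where a node of $Q_u$ at a base point produces two singular points of $C_u=u^\perp\cap X$ and a node off the base locus produces one. Running through the two partitions $2=1\cdot2$ and $2=2\cdot1$: the partition $1\cdot2$ (one node at a base point $p_i$) is case (A), since B\'ezout forbids an irreducible cubic from having two nodes; it contributes, for each $i$, the $\PP^2\subset(\PP^4)^*$ of cubics singular at $p_i$. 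The partition $2\cdot1$ (two nodes off the base locus) forces $Q_u$ reducible, and since a triple of lines has at least three nodes, $Q_u$ is a conic together with a line; distributing the five base points between them --- all five on the conic, or two on the line and three on the conic, or one on the line and four on the conic --- gives cases (B), (C), (D). A short linear-systems count (the unique conic through five general points; conics through three fixed points form a $\PP^2$; a pencil of lines through $p_i$ times a pencil of conics through the other four points) shows the parameter spaces are a $\PP^2$ for (B), ten copies of $\PP^2$ for (C), and five copies of $\PP^1\times\PP^1$ for (D), each of the latter embedded by the multiplication map $(\ell,c)\mapsto \ell c$ as a Segre surface in a $\PP^3\subset(\PP^4)^*$. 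I would then check that this list is exhaustive, that the $5+1+10+5=21$ families are pairwise distinct and each two-dimensional, hence are exactly the irreducible components of the surface $X^{[2]}$; summing degrees gives $5+1+10+2\cdot5=26$, matching Table~\ref{table:degreesXk}.

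For the dual variety I would use that $(\,\cdot\,)^{*}$ sends unions to unions and handle the two kinds of components separately. A linearly embedded $\PP^2\subset(\PP^4)^*$ has as its dual a $\PP^1\subset\PP^4$, of codimension $3$ and hence not a hypersurface, so the $16$ components of types (A), (B), (C) contribute $16$ lines (in fact the $16$ lines of the degree-$4$ Del Pezzo surface $X$: the five exceptional lines, the ten lines through pairs of base points, and the image of the conic through all five) and no hypersurface. For a component of type (D), the Segre surface $\PP^1\times\PP^1$ lies in a hyperplane $\PP^3$ of $(\PP^4)^*$, so its dual is the cone --- with vertex the point of $\PP^4$ dual to that $\PP^3$ --- over the dual of $\PP^1\times\PP^1$ taken inside $\PP^3$; this is a quadric hypersurface of degree $2$ in $\PP^4$. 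I would then argue this quadric cone contains $X$: any $x\in X$ is a singular point of $C_u$ for some $u$ in the type-(D) family indexed by $p_i$ --- namely, take $u$ corresponding to the product of the line through $p_i$ and $\varphi^{-1}(x)$ with the conic through the other four base points and $\varphi^{-1}(x)$ --- and the Terracini-type geometry underlying \eqref{eq:formula_boundary_ch} puts such $x$ on the dual variety. Since $X=V_0\cap V_\infty$ is a complete intersection of quadrics, the quadrics through $X$ are exactly the pencil $\mathscr{L}$; thus this cone lies in $\mathscr{L}$ and, being singular, is one of $V_1,\dots,V_5$. Finally I would verify that the five choices of base point give five \emph{distinct} singular quadrics, so that (the generic pencil having exactly five singular members) the type-(D) components correspond bijectively to $V_1,\dots,V_5$; this yields $\left(X^{[2]}\right)^{*}$ as the union of $16$ lines and the $5$ singular quadrics.

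The routine part is the linear-systems bookkeeping in the enumeration. The two steps with genuine content are: proving the case analysis is complete and each family is irreducible of the expected dimension (so that no component is missed and each is correctly identified as a $\PP^2$ or a $\PP^1\times\PP^1$); and, in the dualization, showing that the dual of a type-(D) quadric surface is precisely a singular member of the pencil, i.e., that it vanishes on $X$ and that the five type-(D) components are in bijection with the five singular quadrics.
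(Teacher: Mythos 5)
Your proposal follows essentially the same route as the paper's proof: the same translation to plane cubics through the five base points, the same four cases (A)--(D) indexed by partitions of $2$, the same identification of the sixteen linearly-embedded $\PP^2$'s and the five $\PP^1\times\PP^1$'s with the correct degree counts, and the same dualization step recognizing the duals of the $\PP^1\times\PP^1$'s as the five singular quadrics of the pencil. The only place you go beyond the paper is in spelling out why each type-(D) quadric cone actually contains $X$ (via a Terracini-type argument) and in identifying the sixteen dual lines as the $16$ lines on the Del Pezzo surface; both additions are correct and make the argument a bit more self-contained, but they do not change the approach.
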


\subsubsection{The irreducible components for k=3}
The points of $X^{[3]}\subset (\PP^4)^*$ are hyperplanes in $\PP^4$ tangent to $X$ at $3$ points. Every such $u\in X^{[3]}$ identifies a plane cubic curve $Q_u$ such that $\widetilde{Q}_u$ has $3$ singular points. We will see that the family of these curves is one-dimensional, as expected, so that $X^{[3]}$ is a curve.
We distinguish two cases in which a plane cubic curve $Q_u$ gives $\widetilde{Q}_u$ with $3$ singular points.
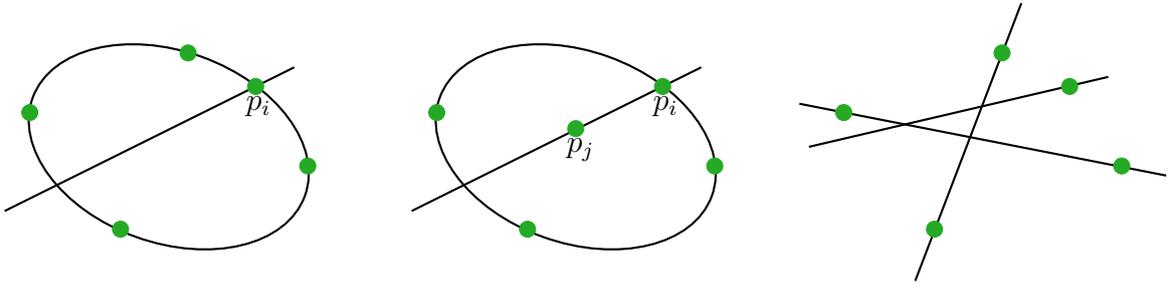
\begin{figure}[ht]
    \centering
    \begin{tikzpicture}[scale=1]
\begin{axis}[
width=2in,
height=2in,
scale only axis,
xmin=-2,
xmax=2,
ymin=-2,
ymax=2,
ticks = none, 
ticks = none,
axis background/.style={fill=white},
axis line style={draw=none} 
]

\draw[thick,rotate around={-20:(0,0)}] (0,0) ellipse (1.5 and 1);
  
\draw[thick] (-1.7,-0.67) -- (1.3,0.83);

\addplot[only marks,mark=*,mark size=3pt,mygreen,
]  coordinates {
    (0.2,0.98) (1.44,-0.2) (0.9,0.63) (-0.5,-0.86) (-1.44,0.3577)
};

\node[black] (P) at (axis cs:0.93,0.42) {$p_i$};

\end{axis}
\end{tikzpicture}
    \begin{tikzpicture}[scale=1]
\begin{axis}[
width=2in,
height=2in,
scale only axis,
xmin=-2,
xmax=2,
ymin=-2,
ymax=2,
ticks = none, 
ticks = none,
axis background/.style={fill=white},
axis line style={draw=none} 
]

\draw[thick,rotate around={-20:(0,0)}] (0,0) ellipse (1.5 and 1);
  
\draw[thick] (-1.7,-0.67) -- (1.3,0.83);

\addplot[only marks,mark=*,mark size=3pt,mygreen,
]  coordinates {
    (1.44,-0.2) (0.9,0.63) (-0.5,-0.86) (-1.44,0.3577)
    (0,0.19) 
};

\node[black] (P) at (axis cs:0.93,0.42) {$p_i$};
\node[black] (P) at (axis cs:0.05,-0.03) {$p_j$};

\end{axis}
\end{tikzpicture}
    \begin{tikzpicture}[scale=1]
\begin{axis}[
width=2in,
height=2in,
scale only axis,
xmin=-2,
xmax=2,
ymin=-2,
ymax=2,
ticks = none, 
ticks = none,
axis background/.style={fill=white},
axis line style={draw=none} 
]
  
\draw[thick] (0.4,1.5) -- (-0.7,-1.4);
\draw[thick] (1.9,-0.3) -- (-1.9,0.45);
\draw[thick] (-1.8,0) -- (1.3,0.73);

\addplot[only marks,mark=*,mark size=3pt,mygreen,
]  coordinates {
    (0.2,0.98) (1.44,-0.2) (0.9,0.63) (-0.5,-0.86) (-1.44,0.3577)
};

\end{axis}
\end{tikzpicture}
    \caption{Left: a conic through five points and a line through one point: they intersect at $p_i$. Center: a conic through four points and a line through two points: they intersect at $p_i$. Right: three lines through five points.}
    \label{fig:X3}
\end{figure}

\noindent
The possible partitions of $3$ are $3 = 1\cdot 2 + 1\cdot 1$ and $3 = 3\cdot 1$. The first partition imposes a node at a base point (case (A)). The other partition is realized in case (B).

\paragraph{(A): a node at $p_i$.}
The first possibility is to impose that $Q_u$ has a node at one of the $p_i$'s. This makes two singularities in $\widetilde{Q}_u$. So we need that $Q_u$ has one additional node somewhere in $\PP^2$. This makes it reducible, as in Figure \ref{fig:X3}, left or center. The curve of $u\in (\PP^4)^*$, that identify plane curves $Q_u$ of this type, has degree $5$ for any choice of the node $p_i$.
The curve in $(\PP^4)^*$ is indeed the union of five lines $\PP^1\subset (\PP^4)^*$. One of them arises by fixing the conic through $p_1,\ldots,p_5$ and moving the line through $p_i$; the other four arise by fixing the line through $p_i$ and another point $p_j$ ($j\neq i)$.
In total, the degree of these components sums to $5\cdot 5 = 25$. To compute the degree $5$ we can also use Kazarian's formulae for the case 
\begin{equation}
    D = 3\pi^*(L) - 2E_i - \sum_{k\neq i} E_k \qquad \quad K_X = -3\pi^*(L) + \sum_{i=1}^5 E_i
\end{equation}
which implies, in Kazarian's notation, that $\mathsf{d} = 1, \mathsf{k} = -3, \mathsf{s} = 4, \mathsf{x} = 8$. Substituting these quantities in the expression for $N_{A_1}$ gives $5$.

\paragraph{(B): three lines.}
The other possibility is that the cubic splits into three lines. We fix two pairs of base points and take the two lines through them; the third line goes through the last point (see Figure \ref{fig:X3}, right). There are $5\cdot \frac{1}{2}\binom{4}{2} = 15$ ways in which one can choose the roles of the five points on the three lines. Each choice determines a line $\PP^1\subset X^{[3]}$. So these components contribute with degree $15 \cdot 1 = 15$.

These cases exhaust all the possibilities for $k=3$. In fact, the plane cubic $Q_u$ must be reducible since it needs to be singular in at least two points in $\PP^2$. The union of a conic and a line makes two singularities in the plane, so $\widetilde{Q}_u$ has three singularities only if one of the two nodes of $Q_u$ is at one of the $5$ points $p_i$. We can then either fix a conic through five points and let the line pass through one of them, or fix the line through two points and let the conic pass through four (both case (A)). The other possible reducible case is that $Q_u$ is a union of three lines. This makes exactly three nodes in the plane. Since any line can contain at most $2$ of the points $p_i$, the only possible partition of these points is $5 = 2+2+1$ which means that two lines are uniquely determined by the choice of points that they contain and the last one moves in the pencil of lines through the remaining point (case (B)). Regarding $X^{[3]}$ and $\left( X^{[3]} \right)^*$ we therefore conclude:

\begin{proposition}\label{prop:DelPezzo_3}
Let $X\subset\PP^4$ be a smooth Del Pezzo surface. Then $X^{[3]}\subset (\PP^4)^*$ is the union of $40$ lines $\PP^1\subset (\PP^4)^*$. Therefore, $\deg X^{[3]} = 40$.\\
The dual variety $\left( X^{[3]} \right)^*\subset \PP^4$ is the union of $40$ projective planes $\PP^2 \subset \PP^4$.
\end{proposition}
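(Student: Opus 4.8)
The plan is to translate the defining condition of $X^{[3]}$ into a question about singular plane cubics through $p_1,\dots,p_5$ and then enumerate the configurations, reusing the analysis already carried out for $X^{[2]}$. I would first record the dictionary: by \eqref{eq:multiplicities_singularities}, a point $u\in X^{[3]}$ corresponds to a plane cubic $Q_u$ through the five base points for which the transform $\widetilde Q_u=\widehat Q_u+\sum_i(\mathrm{mult}_{p_i}Q_u-1)E_i$ is singular at three points, where a node of $Q_u$ located at a base point $p_i$ forces $E_i\subset C_u$ and hence contributes two singular points to $C_u=\widehat Q_u\cup E_i$, while a node off the base locus contributes one. By B\'ezout an irreducible plane cubic has at most one singular point, so $Q_u$ must be reducible, i.e. a conic and a line or three lines, as in Figure~\ref{fig:X3}.

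Next I would treat the conic-plus-line case. A conic and a line meet in two points, so $Q_u$ has two plane nodes; to obtain a third singularity of $\widetilde Q_u$ one of these two points must be a base point $p_a$, which then lies on both factors. Since a line contains at most two of the $p_i$, the line passes through $p_a$ and at most one further base point and the conic must absorb the rest. Running through the admissible distributions of the five points (conic through all five, line through $p_a$; or line through $p_a$ and one other $p_b$, conic through the four base points distinct from $p_b$) yields, for each of the five choices of $p_a$, exactly five one-parameter families. In each family the moving factor sweeps a pencil — lines through a fixed point, or conics through four fixed points, a $\PP^1$ — and multiplication by the fixed complementary factor is a linear injection into the space of cubic forms whose image lies in the $\PP^4\cong(\PP^4)^*$ of cubics through $p_1,\dots,p_5$; hence each family is a line $\PP^1\subset(\PP^4)^*$, contributing $5\cdot5=25$ lines. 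I would cross-check the number $5$ per choice of $p_a$ against Kazarian's $N_{A_1}$ with $(\mathsf d,\mathsf k,\mathsf s,\mathsf x)=(1,-3,4,8)$.

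For three lines, $Q_u$ has exactly three plane nodes, so all three must be base points and the five points split as $2+2+1$ over the three lines; two of the lines are then pinned by the pairs of base points they contain while the third sweeps the pencil through the last point, again a line $\PP^1\subset(\PP^4)^*$. There are $\tfrac12\binom52\binom32=15$ such configurations, giving $15$ more lines, for $25+15=40$ in total. After verifying that these $40$ lines are pairwise distinct and that the case analysis is exhaustive, I conclude that $X^{[3]}$ is exactly their union, so $\deg X^{[3]}=40$, consistent with $N_{A_1^3}=40$ from Table~\ref{table:degreesXk}. Finally, the dual of a line $\PP^1\subset\PP^4$ is the $\PP^2\subset\PP^4$ cut out by its annihilator, and dualization distributes over the irreducible components of a reducible variety (\cite{Harris:AlgebraicGeometry}); hence $\left(X^{[3]}\right)^*=\bigcup_{i=1}^{40}L_i^*$ is a union of $40$ planes $\PP^2\subset\PP^4$.

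The step I expect to be the main obstacle is the combinatorial bookkeeping: making sure the list of configurations is complete — no reducible cubic through $p_1,\dots,p_5$ with the required three singularities is missed, including degenerate borderline cases where two factors coincide or a factor passes through an extra base point — and non-redundant, so that the $25$ lines, the $15$ lines, and the lines within a single family are genuinely distinct components; and checking that in each family the parametrizing pencil really embeds as a \emph{line} in $(\PP^4)^*$ rather than collapsing or leaving the subspace of cubics through the five points. The agreement of the component-by-component count with the independently known total $\deg X^{[3]}=N_{A_1^3}=40$ is the principal safeguard against combinatorial or arithmetic slips here.
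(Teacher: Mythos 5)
Your proposal follows the same route as the paper: translate $X^{[3]}$ into plane cubics with a node at a base point or with a triple of off-base nodes, split into the conic-plus-line and three-lines cases, count $25+15 = 40$ one-parameter pencils, cross-check against $N_{A_1^3}$, and dualize each $\PP^1$ to a $\PP^2$. The combinatorics and the conclusion all match.

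One sentence in the three-lines case is backwards, though. You write that $Q_u$ ``has exactly three plane nodes, so all three must be base points.'' The correct statement (and the one the paper uses, as the partition $3 = 3\cdot 1$) is that \emph{none} of the three nodes can lie at a base point: a node at some $p_i$ contributes two singularities to $\widetilde Q_u$, so if $m$ of the three nodes sit at base points then $\widetilde Q_u$ has $3+m$ singularities, forcing $m=0$ for $k=3$. (Indeed $m\geq 1$ is the $k=4$ three-lines case, and $m=3$ is geometrically impossible for generic $p_1,\dots,p_5$ anyway since it would force three collinear base points.) Fortunately the rest of your analysis---the $5=2+2+1$ distribution of base points over the three lines with the pairwise intersections off the base locus, and the count $\tfrac12\binom{5}{2}\binom{3}{2}=15$---is exactly the correct picture, so only that one sentence needs to be fixed; the counting and the final statement are unaffected.
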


\subsubsection{The irreducible components for k=4}
The variety $X^{[4]}$ is a union of points $u$ such that $\widetilde{Q}_u$ has $4$ singularities. This implies that the plane cubic $Q_u$ must be reducible, as before ($k=3)$. In particular, there are two ways in which this can happen: the cubic has a smooth conic as an irreducible component or it is the union of three lines. The possible partitions are $4 = 2\cdot 2$ (case (A)), $4 = 1\cdot 2 + 2\cdot 1$ (case (B)), and $4 = 4\cdot 1$ (not realizable). 
\begin{figure}[ht]
    \centering
    \begin{tikzpicture}[scale=1]
\begin{axis}[
width=2in,
height=2in,
scale only axis,
xmin=-2,
xmax=2,
ymin=-2,
ymax=2,
ticks = none, 
ticks = none,
axis background/.style={fill=white},
axis line style={draw=none} 
]

\draw[thick,rotate around={-20:(0,0)}] (0,0) ellipse (1.5 and 1);
  
\draw[thick] (-1,-1.4) -- (1.46,1.2);

\addplot[only marks,mark=*,mark size=3pt,mygreen,
]  coordinates {
    (0.2,0.98) (1.44,-0.2) (0.9,0.63) (-0.5,-0.86) (-1.44,0.3577)
};

\node[black] (P) at (axis cs:0.95,0.4) {$p_i$};
\node[black] (P) at (axis cs:-0.5,-0.64) {$p_j$};

\end{axis}
\end{tikzpicture}
    \qquad
    \begin{tikzpicture}[scale=1]
\begin{axis}[
width=2in,
height=2in,
scale only axis,
xmin=-2,
xmax=2,
ymin=-2,
ymax=2,
ticks = none, 
ticks = none,
axis background/.style={fill=white},
axis line style={draw=none} 
]
  
\draw[thick] (0.4,1.5) -- (-0.7,-1.4);
\draw[thick] (1.9,-0.3) -- (-1.9,0.45);
\draw[thick] (-1.8,0.3) -- (1.3,0.7);

\addplot[only marks,mark=*,mark size=3pt,mygreen,
]  coordinates {
    (0.2,0.98) (1.44,-0.2) (0.9,0.63) (-0.5,-0.86) (-1.44,0.3577)
};

\node[black] (P) at (axis cs:-1.4,0.15) {$p_i$};

\end{axis}
\end{tikzpicture}
    \caption{Left: a conic through five points and a line through two points. Right: three lines through five points, two of them intersecting at $p_i$.}
    \label{fig:X4}
\end{figure}
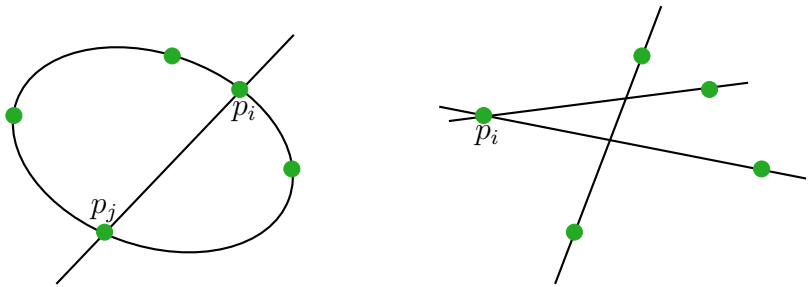

\paragraph{(A): conic + line.}
If $Q_u$ is the union of a conic and a line, the only way for obtaining four singularities on $C_u$ is that the $2$ intersection points of the two components are among the base points $p_1,\ldots,p_5$, as in Figure \ref{fig:X4}, left. So the conic must pass through all five points and is therefore unique. This leaves $\binom{5}{2}$ choices for the line which is uniquely determined by any two points $p_i$, $p_j$. These choices give therefore $10$ points of $X^{[4]}$.

\paragraph{(B): three lines.}
Consider now plane cubics that split into three lines. Since they determine only three intersection points in $\PP^2$, two of these lines must intersect in one of the base points $p_i$, as in Figure \ref{fig:X4}, right. To count them, we can first choose the point containing two lines and then choose the two points that determine the line that does not go through our first choice. Then, all three lines are determined so that this gives precisely $5\cdot \binom{4}{2} = 30$ points in $X^{[4]}$.

There are no other possible configurations of plane cubics $Q_u$ such that $\widetilde{Q}_u$ has four singularities. Indeed, the cubic must be reducible again, so that it is either a conic and a line, or three lines. The conic and the line intersect in two points, hence these are forced to be two of the $p_i$'s, in order for $\widetilde{Q}_u$ to have four singularities. Then the conic is forced to pass also through the remaining three points, so it is fixed. In the case of three lines, two lines are forced to go through two of the base points each. The remaining line must go through the fifth point; to obtain in the end $4$ singularities, this line must also intersect one of the other two lines in one of the $p_i$'s. These are therefore the only possibilities and they are the two cases discussed above. We summarize:

\begin{proposition}\label{prop:DelPezzo_4}
Let $X\subset \PP^4$ be a generic Del Pezzo surface. Then $X^{[4]}\subset (\PP^4)^*$ is the union of $40$ points. Therefore, $\left( X^{[4]}\right)^*\subset \PP^4$ is the union of $40$ hyperplanes.
\end{proposition}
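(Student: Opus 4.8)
The plan is to run the same analysis that handled $X^{[2]}$ and $X^{[3]}$: translate $u\in X^{[4]}$ into a statement about the plane cubic $Q_u$ through $p_1,\dots,p_5$ via \eqref{eq:multiplicities_singularities}, enumerate all configurations, check that each one produces an honest point of $X^{[4]}$, and then match the count against $\deg X^{[4]}=40$ from Table~\ref{table:degreesXk}. First I would recall the dictionary already set up in the text: $E_i\subset C_u$ iff $\mathrm{mult}_{p_i}Q_u\geq 2$; a node of $Q_u$ at a base point $p_i$ contributes two singularities to $\widetilde Q_u$ (the two points where the branches of $\widehat Q_u$ meet $E_i$); a node of $Q_u$ away from the base locus contributes exactly one. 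Since an irreducible plane cubic has at most one node and a reduced plane cubic at most three (a triangle of lines), a cubic $Q_u$ with $\widetilde Q_u$ carrying four singularities must be reducible, hence a smooth conic plus a line or a union of three lines. The non-reduced cases $2\ell_1+\ell_2$ and $3\ell$ I would discard at once: then $C_u$ is a non-reduced curve whose singular locus contains a line, which spans only a $\PP^1$, so $u^\perp$ cannot be tangent to $X$ at four \emph{linearly independent} points.

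Next I would carry out the two cases, as sketched before the statement. For conic-plus-line, the conic and line meet in two points, and $2+1+1$ or $1+1$ falls short, so both intersection points must lie among $p_1,\dots,p_5$; since $Q_u$ lies in the linear system it passes through all five $p_i$, which forces the conic to contain all five points (hence to be the unique conic through them) and the line to be the unique line through the two chosen base points, giving $\binom{5}{2}=10$ configurations. For three lines, the three pairwise intersection points must account for $2+1+1=4$, so exactly one of them is a base point $p_i$ (the crossing of two of the lines) and the other two are not; distributing the remaining four base points over the three lines, each of which carries at most two of them, forces every line to be spanned by two prescribed $p_i$'s, giving $5\binom{4}{2}=30$ configurations. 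I would also record why this is exhaustive: a triangle of lines with two or three vertices at base points cannot pass through all five $p_i$ (the line joining two of the vertices already uses up two base points and carries no third, so at most four of the five are covered), and genericity of the five points rules out tangential intersections, concurrences, and unexpected collinearities.

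For each of these $40$ configurations I would then verify that $C_u$ has exactly four nodes and that the four nodes span a $\PP^3$, so that they are linearly independent and $u\in X^{[4]}$ genuinely. In the conic-plus-line case the four nodes are two points on each of the two skew exceptional lines $E_i,E_j\subset X$, and two disjoint lines in $\PP^4$ span a $\PP^3$; in the three-lines case the four nodes are the four vertices of a quadrilateral of lines lying in the hyperplane $u^\perp$, and they span $u^\perp\cong\PP^3$ because $C_u$ (an anticanonical hyperplane section, so of arithmetic genus $1$) cannot be a plane quartic. Since $u\mapsto Q_u$ is a linear isomorphism, the $40$ configurations give $40$ distinct points; the corresponding set of $u$ is finite hence Zariski closed, so $X^{[4]}$ contains exactly these $40$ points, and as $\deg X^{[4]}=N_{A_1^4}=40$ there is no further embedded or positive-dimensional part, so $X^{[4]}$ is reduced and equals this set of $40$ points. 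Finally, dualizing: the projective dual of a point $u\in(\PP^4)^*$ is the hyperplane $u^\perp\subset\PP^4$, so $\left(X^{[4]}\right)^*$ is the union of the corresponding $40$ hyperplanes.

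I expect the main obstacle to be the exhaustiveness and genericity bookkeeping — certifying that the list of configurations is complete (no non-reduced or higher-multiplicity contributions, no coincidences forced by $Q_u$ sitting in a fixed $4$-dimensional linear system) and, most delicately, checking the linear independence of the four tangency points, which is precisely what separates ``four nodes on $C_u$'' from ``four linearly independent tangency points of $u^\perp$.'' Having the value $\deg X^{[4]}=40$ available from Kazarian's formula is what makes the closing argument clean: once $40$ distinct genuine points are exhibited, degree forces them to be all of $X^{[4]}$.
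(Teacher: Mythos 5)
Your proof is correct and follows essentially the same decomposition as the paper: the conic-plus-line case yields $\binom{5}{2}=10$ configurations and the three-lines case yields $5\binom{4}{2}=30$, with the same combinatorics and the same exhaustiveness argument via reducibility of the cubic. Your explicit check that the four singular points span a $\PP^3$ (two points on each of two disjoint lines for the first case, the four vertices of a non-planar quadrilateral for the second) is a point the paper leaves implicit, and it is genuinely needed since $X^{[4]}$ is defined via tangency at four \emph{linearly independent} points rather than merely a quadrinodal hyperplane section.
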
 

It turns out to be useful for the real picture in Section \ref{subsec:delPezzoReal} to relate these $40$ points to the quadrics of $X^{[2]}_{(D)}$.
\begin{proposition}\label{prop:delPezzox4vsx2}
Let $Y_i$ be the five irreducible components of $X_D^{[2]}$ in some order. (They are the dual varieties to the five singular quadrics in the pencil defining $X$.) The set $X^{[4]}$ is the union of the pairwise intersections of the $Y_i$:
\[
X^{[4]} = \bigcup_{i\neq j} Y_i\cap Y_j.
\]
Each intersection $Y_i\cap Y_j$ consists of four points and so the union is disjoint.
\end{proposition}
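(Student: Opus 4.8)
The plan is to make the five components $Y_1,\dots,Y_5$ of $X_D^{[2]}$ completely explicit via the plane-cubic dictionary used above, and then to read off all incidences with the $40$ points of $X^{[4]}$ by a finite combinatorial check. For $i\in\{1,\dots,5\}$, let $Y_i\subset(\PP^4)^*$ be the closure of the set of hyperplanes $u$ with $Q_u = C\cdot m$, where $C$ is a conic through the four points $\{p_r : r\neq i\}$ and $m$ a line through $p_i$; by Proposition~\ref{prop:DelPezzo_2} this is precisely the case-(D) component of $X^{[2]}$ attached to $p_i$. Any such product cubic vanishes at $p_1,\dots,p_5$, so it lies in $\langle q_0,\dots,q_4\rangle$ and represents a point $[C\cdot m]\in(\PP^4)^*$; thus $(C,m)\mapsto[C\cdot m]$ is a morphism from $\PP^1\times\PP^1$ — the pencil of conics through the four points times the pencil of lines through $p_i$ — onto $Y_i$. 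By Proposition~\ref{prop:DelPezzo_2} again, $Y_i$ is an irreducible quadric surface whose dual $(Y_i)^*$ is one of the singular quadrics $V_1,\dots,V_5$ of the pencil, so $Y_i$ lies in the hyperplane $\Pi_i\subset(\PP^4)^*$ of functionals vanishing at the vertex of that quadric; for generic $X$ the hyperplanes $\Pi_1,\dots,\Pi_5$ are pairwise distinct.

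The core step is to decide, for each of the $40$ points $u$ of $X^{[4]}$ produced by the case analysis preceding Proposition~\ref{prop:DelPezzo_4}, which of the components $Y_k$ contain it. Since $Q_u = C\cdot m$ factors uniquely into a quadratic times a linear form when $C$ is irreducible, membership $u\in Y_k$ just asks whether $Q_u$ admits a factorization as (conic through $\{p_r : r\neq k\}$) times (line through $p_k$). For a point of type (A), $Q_u = \Gamma\cup\ell$ with $\Gamma$ the unique conic through $p_1,\dots,p_5$ and $\ell = \overline{p_i p_j}$; as $\Gamma$ is irreducible the only factorization is $C=\Gamma$, $m=\ell$, which lies in $Y_k$ exactly when $p_k\in\ell$, i.e.\ when $k\in\{i,j\}$, so $u\in Y_i\cap Y_j$ and in no other pairwise intersection. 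For a point of type (B), $Q_u = \overline{p_a p_d}\cup\overline{p_a p_e}\cup\overline{p_b p_c}$ with $\{a,b,c,d,e\}=\{1,\dots,5\}$ (two lines crossing at the base point $p_a$, the third through $p_b,p_c$); inspecting the three factorizations, the union of the two lines that are not taken to be $m$ contains all four required base points only when $m = \overline{p_a p_d}$ (which forces $k=d$) or $m = \overline{p_a p_e}$ (which forces $k=e$), so $u\in Y_d\cap Y_e$ with $\{d,e\}=\{1,\dots,5\}\setminus\{a,b,c\}$, and in no other pairwise intersection. Reading this from the side of the pairs: a fixed pair $\{i,j\}$ collects exactly the one type-(A) point with line $\overline{p_i p_j}$, together with the three type-(B) points obtained by taking $\{d,e\}=\{i,j\}$ and choosing the double vertex $p_a$ among the three remaining base points. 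These four cubics are pairwise distinct (one has a smooth conic component and the three triangles differ, using that no three of the five base points are collinear), so $X^{[4]}\cap(Y_i\cap Y_j)$ has exactly $4$ elements; summing, these ten $4$-element sets are pairwise disjoint and their union is all of $X^{[4]}$.

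It remains to show that $Y_i\cap Y_j$ contains no further points. Since $\Pi_i\neq\Pi_j$, the intersection lies in the plane $\Lambda_{ij}:=\Pi_i\cap\Pi_j\cong\PP^2$, where $Y_i$ and $Y_j$ cut out plane conics $\mathcal C_i$ and $\mathcal C_j$. Assuming these two conics have no common component, Bézout bounds $|\mathcal C_i\cap\mathcal C_j|$ by $4$; since we have already exhibited $4$ distinct points of $X^{[4]}$ inside $\mathcal C_i\cap\mathcal C_j = Y_i\cap Y_j$, the intersection consists of exactly those $4$ points. Hence $Y_i\cap Y_j = X^{[4]}\cap(Y_i\cap Y_j)$, and together with the previous paragraph this gives $X^{[4]} = \bigcup_{i\neq j}(Y_i\cap Y_j)$ with each intersection of size $4$ and the union disjoint. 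That $\mathcal C_i$ and $\mathcal C_j$ share no component — equivalently that $Y_i\cap Y_j$ is zero-dimensional — I would verify for generic $X$ on a diagonalized pencil of quadrics, where $Y_i$ and $\Lambda_{ij}$ can be written down explicitly.

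I expect the main obstacle to be the combinatorial bookkeeping of the second paragraph: one has to match every reducible plane cubic occurring in $X^{[4]}$ to the correct set of components $Y_k$ without over- or under-counting, and in particular to handle carefully the boundary of the parametrization $\PP^1\times\PP^1\to(\PP^4)^*$ where the conic factor degenerates into a pair of lines — precisely where the type-(B) points sit — since a single mismatch there would silently spoil the final disjointness count. By contrast, the genericity input $\mathcal C_i\neq\mathcal C_j$ and the concluding Bézout argument are routine.
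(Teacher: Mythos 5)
Your proof is correct, and it takes a genuinely different route from the paper's. The paper works on the side of the curve $C_u=u^\perp\cap X\subset u^\perp\cong\PP^3$: it observes that $C_u$ is a four-cycle of lines, examines the pencil of quadrics in $\PP^3$ cut out by the defining pencil of $X$, locates two reducible members of that pencil (pairs of planes spanned by opposite edges of the cycle), and traces these back to two singular quadrics $V_i,V_j$ of the ambient pencil to conclude $u\in Y_i\cap Y_j$; the count and disjointness are then extracted by the same Bézout bound you use at the end. Your argument instead stays entirely on the side of plane cubics: you make each $Y_k$ explicit as the (automatically closed) image of $\PP^1\times\PP^1$ under $(C,m)\mapsto[C\cdot m]$ and decide membership $u\in Y_k$ by listing the factorizations of the reducible cubic $Q_u$ into a quadratic times a linear form, which reduces the containment $X^{[4]}\subset\bigcup_{i\neq j}Y_i\cap Y_j$ to a finite combinatorial check over the two types of hyperplane sections. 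The upshot is that you produce the explicit bijection between the $40$ points and the $10$ pairs $\{i,j\}$ with multiplicity $4$, so the disjointness is structural rather than inferred; the paper's approach is shorter and more geometric but does not give the assignment $u\mapsto\{i,j\}$ explicitly. Both proofs invoke the same Bézout bound in $\Lambda_{ij}=\Pi_i\cap\Pi_j$ to cap $|Y_i\cap Y_j|$ at $4$, and both implicitly rely on the two plane conics $\mathcal{C}_i,\mathcal{C}_j$ having no common component (equivalently $\dim(Y_i\cap Y_j)=0$); you flag this as a genericity check to perform on a diagonalized pencil, which is a fair hedge and matches the paper's own level of detail on this point.
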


\begin{proof}
Let $u$ be a point of $X^{[4]}$ so that the hyperplane section $C_u = u^\perp\cap X$ is a curve with four singularities. The case analysis above shows that this hyperplane section is a union of four lines in $\PP^4$ (with either two exceptional divisors or one among them). Their intersection graph is a four cycle. The ideal of the curve $C_u = u^\perp \cap X$ in $u^\perp \cong \PP^3$ is generated by two quadrics, namely those in the pencil defining $X$ modulo the linear form given by $u$. This pencil contains two products of planes, where one of the planes is spanned by a pair of intersecting lines. The two planes of each pair intersect in a line spanned by two singular points of $C_u$, giving the missing two lines between the four points.
These two singular quadrics in the pencil associated to $u^\perp\cap X$ come from singular quadrics in the pencil associated to $X$, say $V_i$ and $V_j$. Since $V_i$ and $V_j$ contain a plane $\PP^2$, they must be singular. The line in $u^\perp$ in which the two planes intersect must pass through the cone point of the appropriate $V_k$ containing both planes. This shows that the line, in which the two planes intersect, is a line of the ruling of $V_k$. Now $u^\perp$ is the tangent hyperplane to $V_k$ along this line, which implies that $u$ is in $Y_k$. Since this argument works both for $V_i$ and $V_j$, it shows $u\in Y_i\cap Y_j$ as desired. Since the varieties $Y_1,\ldots,Y_5$ are surfaces of degree $2$, the intersection $Y_i\cap Y_j$ contains $4$ points. There are $10$ such pairs and the union $X^{[4]}$ has $40$ points, so that the union must be disjoint.
\end{proof}

\subsection[The real picture]{The $\R$eal picture}\label{subsec:delPezzoReal}
Up to now, we investigated the varieties $X^{[k]}$ and their duals for the del Pezzo surfaces $X\subset \PP^4$ from a point of view of complex projective geometry. Now we want to dive in the realm of convex and real algebraic geometry.
In what follows, we want to address the question: 
\begin{center}
    Which of the irreducible hypersurfaces in $\left( X^{[k]} \right)^*$ can contribute to $\partial_a \conv X$?
\end{center} 
This question is equivalent to ask: 
\begin{center}
    Are the points of $X^{[k]}\subset (\PP^4)^*$ supporting hyperplanes for $X$?
\end{center}
Recall that a hyperplane $u^\perp$ supports $\conv X$ if $X(\R)\cap u^\perp \neq \emptyset$ and $\lprod u, x \rprod$ has the same sign for all $x\in X(\R)$ (in affine coordinates on a chosen chart where $X(\R)$ is compact).

Since we are dealing with convex bodies, we want our smooth real algebraic del Pezzo surface $X$ to have a compact real locus on some affine chart of $\PP^4$. By the compactness of $\PP^4(\R)$, this is equivalent to the fact that the curve $H\cap X$ has no real points. The blow ups of the real projective plane in $a$ real points and $b$ pairs of complex conjugate points (with $a+2b = 5$) always contain a real line in $\PP^4$ and hence do not satisfy this requirement. However, there is a full classification of the real forms of Del Pezzo surfaces, see \cite{russo:delPezzo}. There are three relevant topological types:
the $4$ real forms in \cite[Corollary 3.2]{russo:delPezzo} that are not blow ups of the real projective plane do not contain real lines. The second to last case $(Q^{3,0} \times Q^{3,0})(0,4)$ in Russo's list has no real points at all, though. So the interesting cases for us are the three types $Q^{3,1}(0,4)$, $Q^{2,2}(0,4)$ and $\mathbb{D}_4$. Here, the notation $V(a,2b)$ for a real algebraic variety $V$ denotes the blow up of $V$ in $a$ real points and $b$ complex conjugate pairs of points. The notation $Q^{r,s}$ stands for the quadric in $\PP^{r+s-1}$ with signature $(r,s,0)$. The type $\mathbb{D}_4$ is a non-standard real structure (called de Jonqui\'eres involution) of the blow up of the real projective plane in five real points described in \cite[Example 2 for $n=3$]{russo:delPezzo}. The structure of the algebraic boundary of the convex hull of $X$ depends only on the topological type of our Del Pezzo surface $X$.

The complex algebraic discussion of the varieties $X^{[k]}$ in the previous subsections is relevant for all these real types:
a real algebraic Del Pezzo surface $X$ of degree $4$ is embedded in $\PP^4$ by its anticanonical linear system, so there is a complex projective automorphism $A\in {\mathrm PGL}_4(\C)$ that maps it to a (complex algebraic) surface that is the blow up of $\PP^2$ in $5$ points as a complex algebraic variety. Since $A$ is an automorphism of $\PP^4$, tangency of $X$ with hyperplanes $H\subset \PP^4$ is preserved and the above discussion of the irreducible components of subvarieties $X^{[k]}$ of $X^*$ for surfaces that are blow ups is still relevant here. For $k=2$, the only candidates for irreducible components of the algebraic boundary of $\conv X$ are the singular quadrics in the pencil of quadrics vanishing on $X$. 
Here we need to distinguish cases, depending on the signature of the real singular quadrics of the pencil. 

So what are the signatures in each pencil? Every topological type of a smooth real Del Pezzo surface forms a connected family, see \cite[Corollary 5.6]{Kollar:RealAlgebraicSurfaces}. Since a Del Pezzo surface of degree $4$ in $\PP^4$ is smooth if and only if there are precisely $5$ distinct singular quadrics in its defining pencil, the connectedness of the topological types implies that the number of real singular quadrics in the pencil of a real Del Pezzo surface $X\subset \PP^4$ and their signature is constant for every type. They can therefore be computed from an example, as we do below. Table \ref{table:Del_Pezzo_topo} shows the cases that are relevant for us.
\begin{table}[ht!]
    \renewcommand{\arraystretch}{1.7}
    \centering
    \begin{tabular}{c|c|c|c}
    \textbf{type} & \textbf{topology} & \textbf{real singular quadrics} & \textbf{signatures} \\
    \hline
    \hline
    $Q^{3,1}(0,4)$ & $S^2$ & $3$ & $(2,2,1), (3,1,1), (3,1,1)$ \\
    \hline
    $Q^{2,2}(0,4)$ & $S^1\times S^1$ & $5$ & $(2,2,1), (2,2,1), (2,2,1), (3,1,1), (3,1,1)$ \\
    \hline
    $\mathbb{D}_4$ & $S^2 \sqcup S^2$ & $5$ & $(2,2,1), (3,1,1), (3,1,1), (3,1,1), (3,1,1)$ \\
    \hline
    \end{tabular}
    \caption{Topological types of Del Pezzo surfaces with no real lines, and the signatures of the real singular quadrics of the associated pencil.}
    \label{table:Del_Pezzo_topo}
\end{table}

We now show that each topological type that does not contain a line has a compact real locus on a suitable affine chart of $\PP^4$. In order to find a hyperplane $H$ that contains no real points of $X$, we can reason as follows. Fix one of the real singular quadrics $Q$ of the pencil associated to $X$ having signature $(3,1,1)$ and consider the projection $p\colon \PP^4\dashrightarrow \PP^3$ from its singular point. The image of the quadric itself under this projection is a quadric with signature $(3,1,0)$, hence its real points form a sphere $S^2$. This projection, restricted to the Del Pezzo surface $X$, is a $2:1$ cover. Let $H'\subset \PP^3$ be a hyperplane that contains no real points of $p(Q)$. Then, the preimage $H = p^{-1}(H')\subset \PP^4$ is a hyperplane that contains no real points of $X$. The center of the projection does not lie on $X$ since $X$ is smooth and a complete intersection. In conclusion, the Del Pezzo surface $X$ has a compact real locus in the affine chart $\PP^4\setminus H$.

For dimensional reasons, we already know that $\left( X^{[3]}\right)^*$ is not part of the algebraic boundary of $\conv X$.
On the other hand, $\left( X^{[4]}\right)^*$ is a union of $40$ hyperplanes, but none of them is a supporting hyperplane for $X(\R)$. Indeed, Proposition~\ref{prop:delPezzox4vsx2} implies that the real hyperplanes $u\in X^{[4]}$ do not intersect the convex hull of $X$ at all or they support one of its edges. These edges are in the ruling of a real singular quadric in the pencil and therefore already captured by the varieties in $X^{[2]}_{(D)}$. Alternatively, the dimension of the supported face is $1$ so that its Zariski closure is not equal to $u^\perp$, so it cannot be an irreducible component of the algebraic boundary.

Let us focus now on $\left( X^{[2]}\right)^*$. 
The convexity of $\conv X$ imposes curvature conditions on the hypersurfaces in its algebraic boundary. A quadric with signature $(2,2,1)$ is a cone over an hyperboloid in $\PP^3$ which has negative Gaussian curvature at every point, and therefore a positive and a negative principal curvatures. However, the principal curvatures at a point in the boundary of a convex body must be, when defined, nonnegative \cite[Section 2.5]{Schneider:BrunnMinkowskiTheory}. Hence, these quadrics cannot bound a convex set.
Therefore, in the $Q^{3,1}(0,4)$ and $Q^{2,2}(0,4)$ cases there can be at most two of the varieties in $\left( X^{[2]} \right)^*$, namely the quadrics with signature $(3,1,1)$, that contribute to the algebraic boundary of the convex hull. 
In the case when $X$ is of type $\mathbb{D}_4$, there are at most $4$ quadratic cones that can contribute to the algebraic boundary of the convex hull. 
For any of these four quadrics, there exists an affine chart in which it contributes to the algebraic boundary. To show this, fix one of these real singular quadrics, say $V$. Consider the projection $p$ to $\PP^3$ from the singular point of $V$. As we noticed above, this gives a $2:1$ cover when restricted to $X$. The image $p(X)$ is a quadric with signature $(3,1,0)$, namely $S^2$. The preimage of each tangent hyperplane to a point on the sphere is a hyperplane in $\PP^4$. By chosing an affine chart for instance with $p$ at infinity, this hyperplane is a supporting hyperplane for $X$, because it was a supporting hyperplane in the projection. The same reasoning holds for all the singular quadrics in the pencil with signature $(3,1,1)$.

In fact, not all four singular quadrics can show up in the boundary of $\conv X$ in the same affine chart: they come in two pairs, and only one pair can be part of the algebraic boundary  of $\conv X$ in a given affine chart. We make this rigorous in the following proposition.
\begin{proposition}\label{prop:pairs_quadrics_D4}
    Let $X\subset\PP^4$ be a smooth real Del Pezzo surface of degree $4$ and type $\mathbb{D}_4$. Let $V_1,V_2,V_3,V_4$ be the four singular quadrics of the pencil having signature $(3,1,1)$. Then, there are exactly two pairs of quadrics such that in a given affine chart (where $X(\R)$ is compact) only one pair can contribute to $\partial_a (\conv X)$.
\end{proposition}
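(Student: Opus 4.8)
The plan is to work in coordinates that simultaneously diagonalise the pencil of quadrics vanishing on $X$. For type $\mathbb{D}_4$ all five singular quadrics are real (Table~\ref{table:Del_Pezzo_topo}), so the pencil is regular with five distinct real discriminant roots $a_0<a_1<a_2<a_3<a_4$, and the usual eigenvector argument for simultaneously diagonalising two real symmetric matrices (one of which is regular) puts the pencil, after a real projective change of coordinates, into the form $V_\lambda=\{\sum_{k=0}^4(a_k-\lambda)\,\epsilon_k\,x_k^2=0\}$ with $\epsilon_k\in\{\pm1\}$. First I would trace the signature of $V_\lambda$: it is constant on each of the five open arcs of $\PP^1(\R)$ cut out by the $a_k$ and it flips one eigenvalue as $\lambda$ crosses each $a_k$. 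Matching this against the profile of type $\mathbb{D}_4$ in Table~\ref{table:Del_Pezzo_topo} (four cones of signature $(3,1,1)$, one of signature $(2,2,1)$) forces, up to relabelling, that the two $(4,1)$-arcs and the three $(3,2)$-arcs alternate, with the $(2,2,1)$-cone flanked by two $(3,2)$-arcs, and each $(4,1)$-arc flanked by two of the $(3,1,1)$-cones. Labelling the four $(3,1,1)$-cones $V_i:=V_{a_i}$, $i\in\{0,1,2,3\}$ (so the $(2,2,1)$-cone is $V_{a_4}$), each $q_{a_i}$ is in these coordinates a nondegenerate form of signature $(3,1)$ in the four variables $\{x_k:k\neq i\}$ with vertex $v_i=e_i$ and a distinguished negative eigendirection $x_{\sigma(i)}$, $\sigma(i)\neq i$. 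Working out the signs shows that $\sigma$ takes exactly two values, each attained twice: the two cones that are the endpoints of a common $(4,1)$-arc share their negative eigendirection. This is the required partition into two pairs, and moreover $\sigma$ sends the two indices of one pair to an index belonging to the other pair.

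The heart of the exclusion is a necessary condition for a $(3,1,1)$-cone to be an irreducible component of $\partial_a(\conv X)$. Fix an affine chart $u^\perp$, $u=(u_0,\dots,u_4)$, in which $X(\R)$ is compact. Claim: if $V_i$ contributes to $\partial_a(\conv X)$, then (a) $v_i=e_i\in u^\perp$, i.e. $u_i=0$, and (b) the affine plane $u^\perp\cap\{x_i=0\}$ is disjoint from the solid ellipsoid $B_i:=\{q_{a_i}\le 0\}\cap\{x_i=0\}\subset\PP^3_{\{x_k:k\neq i\}}$, which spelled out via the support function of $B_i$ is
\[
u_{\sigma(i)}^2\;>\;|a_{\sigma(i)}-a_i|\sum_{l\neq i,\sigma(i)}\frac{u_l^2}{|a_l-a_i|},
\]
so in particular $u_{\sigma(i)}\neq 0$. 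To prove the Claim I would argue that if $v_i\notin u^\perp$, or if $(b)$ fails, then the affine trace $V_i\setminus u^\perp$ is a singular double cone or a two-sheeted cylinder across which the two connected components of $X(\R)$ are split (they have opposite sign of $x_{\sigma(i)}/\langle u,\cdot\rangle$, which is exactly where the topology $X(\R)=S^2\sqcup S^2$ of type $\mathbb{D}_4$ is used), so that $V_i(\R)$ meets the interior of $\conv X$ — e.g. the vertex $v_i$ lies in the interior, being the midpoint of two points of $X(\R)$. This contradicts the fact that an irreducible component of the algebraic boundary of a convex body has real locus disjoint from the interior. In the remaining case $V_i\setminus u^\perp$ is a solid cylinder over an ellipsoid containing $\conv X(\R)$ and $V_i$ genuinely contributes, inequality $(b)$ being precisely that $u^\perp\cap\{x_i=0\}$ misses $B_i$.

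Granting the Claim, the proposition follows. Suppose $V_i$ and $V_j$ lie in different pairs, so $\sigma(i)\neq\sigma(j)$; by the combinatorial remark at the end of the first step we may take $\sigma(i)$ to be the index of a cone in $V_j$'s pair. If $\sigma(i)=j$, the Claim forces $u_{\sigma(i)}\neq 0$ (from $V_i$) and $u_j=u_{\sigma(i)}=0$ (the vertex condition for $V_j$), a contradiction. Otherwise $x_{\sigma(i)}$ occurs in $q_{a_j}$, and condition $(b)$ for $V_j$ bounds $u_{\sigma(j)}^2$ below by a positive multiple of $u_{\sigma(i)}^2$, while condition $(b)$ for $V_i$ bounds $u_{\sigma(i)}^2$ below by a positive multiple of $u_{\sigma(j)}^2$; because $a_0<\dots<a_4$ the product of the two constants exceeds $1$, forcing $u_{\sigma(j)}=0$ and then a further contradiction as above. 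Hence no cone from one pair can contribute in the same chart as a cone from the other pair. Conversely, both cones of a single pair can contribute simultaneously: sharing the negative eigendirection $x_m$, their vertex conditions are $u_i=u_{i'}=0$ and both inequalities $(b)$ only ask that $u_m^2$ dominate a positive form in the two remaining coordinates, which holds for $u$ concentrated near $e_m$ (and such a $u$ keeps $X(\R)$ compact, exactly as in the construction preceding the proposition). The main obstacle is the Claim — specifically checking that the "straddling" configurations genuinely push $V_i(\R)$ into the interior of $\conv X$ — after which everything reduces to the elementary inequalities among the root gaps $a_l-a_i$.
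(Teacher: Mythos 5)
Your overall plan is sensible and the combinatorial step works: after simultaneously diagonalizing the pencil, the signature profile of type $\mathbb{D}_4$ does force the arrangement you describe, and pairing the two $(3,1,1)$-cones bounding each $(4,1)$-arc by a shared negative eigendirection yields exactly the two pairs $\{V_1,V_2\}$ and $\{V_3,V_4\}$ of the paper. But the entire exclusion argument rests on your \emph{Claim}, and that Claim is not correct.

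\textbf{Claim (a) is false.} It is not true that a cone $V_i$ contributing to $\partial_a(\conv X)$ must have its vertex on the hyperplane at infinity. Take the paper's Example~\ref{ex:DelPezzoS2S2} in the chart $\{x_0=1\}$, where $V_1$ and $V_2$ both bound $\conv X$ and both vertices lie at infinity. Now perturb the chart to $\{x_0+\varepsilon x_2 = 1\}$ with $\varepsilon$ small: $X(\R)$ remains compact (the new hyperplane at infinity is far from the bounded set $X(\R)$), and by continuity of the exposed faces both $V_1$ and $V_2$ still appear in the algebraic boundary, yet the vertex of $V_1$, namely $[0,0,1,0,0]$, now has $x_0+\varepsilon x_2 = \varepsilon\neq 0$ and sits at the finite point $(0,1/\varepsilon,0,0)$. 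So a contributing cone can have a finite vertex; (a) does not hold, and everything downstream that invokes ``the vertex condition for $V_j$'' collapses.

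\textbf{The justification of the Claim is also wrong in half the cases.} You argue that when the vertex is finite, ``the two connected components of $X(\R)$ are split'' across the double cone. But this is precisely the dichotomy the paper has to establish via the signs $V_i(p_j)$ (its Table~\ref{table:pairs_of_quadrics}): rulings of the two ``end'' cones ($V_1,V_4$ in the paper's labelling) connect two points on the \emph{same} connected component of $X(\R)$, so both spheres lie on a single nappe and the vertex is \emph{not} the midpoint of two points of $X(\R)$. Your argument that $v_i$ lies in the interior of $\conv X$ therefore does not go through for those cones.

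\textbf{The final arithmetic also does not hold in general.} Your contradiction hinges on the constant $K=\frac{|a_{\sigma(i)}-a_i|\,|a_{\sigma(j)}-a_j|}{|a_{\sigma(j)}-a_i|\,|a_{\sigma(i)}-a_j|}$ satisfying $K\ge 1$, but with $\sigma$ as described this reduces to an inequality of the form $(a_2-a_1)(a_3-a_0)\ge(a_1-a_0)(a_3-a_2)$, which fails, e.g., for $a_0=0,\;a_1=1,\;a_2=1.1,\;a_3=2$. Including the extra term involving the fifth root $a_4$ does not obviously repair this.

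So the route you propose differs from the paper's (the paper computes signs $V_i(p_j)$ at the vertices, excludes $V_2,V_3$ by a midpoint argument, and separately excludes $V_1,V_4$ using an affine linear function that separates the two nappes of $V_2$), but it has a genuine gap: you would need a correct necessary condition replacing the Claim, and a valid argument that it excludes mixed pairs. The sign computation at the vertices — rather than a condition on the vertex location — is exactly the ingredient that's missing.
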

\begin{proof}
    The signatures of the matrices in the pencil associated to $X$ are shown in Figure \ref{fig:pencil}. 
    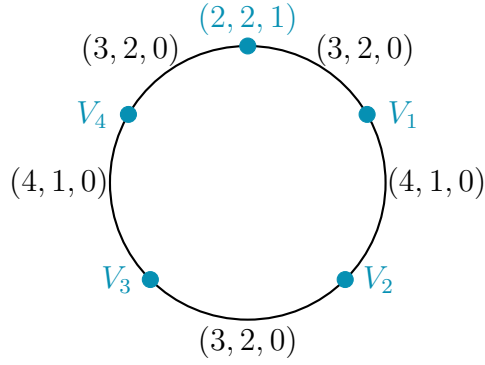
\begin{figure}[ht]
        \centering
        \begin{tikzpicture}[scale=1]
\begin{axis}[
width=2.57in,
height=2in,
scale only axis,
xmin=-1.8,
xmax=1.8,
ymin=-1.4,
ymax=1.4,
ticks = none, 
ticks = none,
axis background/.style={fill=white},
axis line style={draw=none} 
]

\draw[thick] (0,0) circle (1);

\addplot[only marks,mark=*,mark size=3pt,myteal,
]  coordinates {
    (0,1) (0.866,0.5) (0.707,-0.707) (-0.707,-0.707) (-0.866,0.5)
};

\node[myteal] (P) at (axis cs:0,1.2) {$(2,2,1)$};
\node[myteal] (P) at (axis cs:1.13,0.5) {$V_1$};
\node[myteal] (P) at (axis cs:0.95,-0.707) {$V_2$};
\node[myteal] (P) at (axis cs:-0.95,-0.707) {$V_3$};
\node[myteal] (P) at (axis cs:-1.13,0.5) {$V_4$};

\node[black] (P) at (axis cs:0.85,0.97) {$(3,2,0)$};
\node[black] (P) at (axis cs:1.37,0) {$(4,1,0)$};
\node[black] (P) at (axis cs:0,-1.16) {$(3,2,0)$};
\node[black] (P) at (axis cs:-1.37,0) {$(4,1,0)$};
\node[black] (P) at (axis cs:-0.85,0.97) {$(3,2,0)$};

\end{axis}
\end{tikzpicture}
        \caption{The pencil of quadrics in the $\mathbb{D}_4$ type of Del Pezzo surface, with the respective signatures.}
        \label{fig:pencil}
    \end{figure}
Let us consider an affine chart on this $\PP^1$ by assuming that the singular quadric with signature $(2,2,1)$ is the point at infinity.
    
Fix now the quadric $V_1$. Up to a change of coordinates in $\PP^4$, it has equation
\begin{equation}
    x_0^2+x_1^2+x_2^2-x_3^2=0.
\end{equation}
Denote by $p_i$ the singular point of $V_i$. With this choice of coordinates $p_1 = [0,0,0,0,1]$.
Consider the quadric $Q_i = V_1 + \varepsilon V_i$, for $i=2,3,4$ and $\varepsilon >0$. For $\varepsilon$ small enough, because of our choice of affine chart in the pencil, the quadric $Q_i$ has signature $(4,1,0)$. The determinant of the matrix associated to $Q_i$ will then be
\begin{equation}
    -\varepsilon c_i + \varepsilon^2 (\ldots )
\end{equation}
where $c_i$ is the coefficient of $x_4^2$ in $V_i$.
Therefore, the signature of $Q_i$ is determined by the sign of $c_i$: in this case for all $i$ we have $c_i>0$.
Notice that $V_i(p_1)=c_i$ for $i=2,3,4$, hence $V_i(p_1)>0$. Here we are using the notation $V_i$ for both the quadrics and its defining polynomial.
We can repeat the same argument for $V_2,V_3,V_4$ and we get the sign conditions shown in Table \ref{table:pairs_of_quadrics}.
\begin{table}[ht!]
    \renewcommand{\arraystretch}{1.7}
    \centering
    \begin{tabular}{c|c|c|c}
    \textbf{signs at $p_1$} & \textbf{signs at $p_2$} & \textbf{signs at $p_3$} & \textbf{signs at $p_4$} \\
    \hline
    \hline
    $V_2(p_1) > 0$ & $V_1(p_2) > 0$ & $V_1(p_3) < 0$ & $V_1(p_4) > 0$ \\
    \hline
    $V_3(p_1) > 0$ & $V_3(p_2) < 0$ & $V_2(p_3) < 0$ & $V_2(p_4) > 0$ \\
    \hline
    $V_4(p_1) > 0$ & $V_4(p_2) < 0$ & $V_4(p_3) > 0$ & $V_3(p_4) > 0$ \\
    \hline
    \end{tabular}
    \caption{Signs of the singular quadrics with signature $(3,1,1)$ evaluated at the singular points of the other quadrics.}
    \label{table:pairs_of_quadrics}
\end{table}

Let us consider now an affine chart in which $V_2$ contributes to $\partial_a \conv X$.
Since $V_2$ has signature $(3,1,1)$, the convex hull of the Del Pezzo surface $X$ in this chart must be contained in one of the regions in which $V_2<0$. Consider the quadric $V_3$ in this affine chart. The cone point $p_3$ belongs to the locus where $V_2$ is negative, therefore a real line of $V_3$ through $p_3$ will intersect $X$ in two real points. The segment connecting these two points will contain $p_3$ in its interior.
Since the discriminant of $V_3$ with respect to the projection from $p_2$ is definite (and hence has no real points), the two extrema of this segment belong to two distinct connected components.
Assume by contradiction that this segment belongs to the boundary of $\conv X$. Then the hyperplane $H$ exposing that segment is the tangent hyperplane to $V_3$ at the points of the segment (they all have the same tangent hyperplane, since they lie on a line of a quadratic cone through its vertex). Then $H$ has a different sign on the two real connected components of $X$. Indeed, consider a line through $p_3$ such that $V_3(x)\leq 0$ for every $x$ on that line. Then $H$ changes sign on that line in $p_3$. So $H$ cannot be a supporting hyperplane.

This shows that if in the chosen affine chart $V_2$ is part of the algebraic boundary of $\conv X$, then $V_3$ cannot be part of it. Repeating the same argument for all quadrics we find that in the same affine chart the following pairs of quadrics can bound simultaneously $\conv X$:
\begin{equation}
    V_1, V_2 \qquad \hbox{or} \qquad V_3, V_4 \qquad \hbox{or} \qquad V_1, V_4.
\end{equation}

In the next step, we exclude the pair $V_1, V_4$. 
Table~\ref{table:pairs_of_quadrics} shows that $V_3(p_1)>0$ and $V_1(p_3)<0$. Geometrically, each $V_i$ is a cone over a quadric with signature $(3,1,0)$ in $\PP^3$, namely, a sphere. By the choice of the signature, in every affine chart $\R^4$ of $\PP^4$ the region that contains the points inside the sphere is convex and it is given by a connected component of $V_i\leq 0$. Therefore, $p_3$ is inside the sphere of $V_1$ and $p_1$ is outside the sphere of $V_3$. This implies that the two real intersection points of a line on $V_3$ through $p_3$ lie on the two connected components of $X(\R)$. Since $p_1$ is outside of the sphere of $V_3$ and $p_3$ inside the sphere of $V_1$, two real intersection points with a line on $V_1$ through $p_1$ lie on the same connected component of $X(\R)$. In particular, the projection from $p_1$ of the real points of $X$ has two connected components.
Of course, the same argument works symmetrically for $V_4$ instead of $V_1$.
Regarding $V_2$ and $V_3$, the argument above with the hyperplane $H$ shows that the projection of $X(\R)$ from $p_2$ or $p_3$ is surjective.

Now fix an affine chart where $X(\R)$ is compact and such that $V_1$ is an irreducible component of $\partial_a \conv X$. Assume by contradiction that the other boundary component of $\partial_a\conv X$ is $V_4$. 
The quadric $V_2$ is a cone over a sphere, with singular point $p_2\in \{x \in\PP^4(\R)\colon V_1(x)<0\}$. Since the set $\{x \in\PP^4(\R)\colon V_2(x)<0\}$ in any affine chart $\R^4$ containing $p_2$ is the union of two convex cones $C_1, C_2$, with $C_1\cap C_2 = p_2$, there exists an affine linear function $\ell$ on $\R^4$ that separates the cones, i.e., $\ell(p_2)=0$, $\ell(C_1\setminus p_2)>0$ and $\ell(C_2\setminus p_2)<0$. Since $p_1$ belongs to the region where $V_2$ is positive, we can also assume that $\ell(p_1) = 0$. Let $S_1$ and $S_2$ be the two connected components of the real locus $X(\R)$ on our affine chart $\R^4$. Since $X(\R)$ is compact on that chart, the reasoning above shows that $S_1 \subset C_1$ and $S_2 \subset C_2$, and therefore $\ell$ separates the two connected components of $X(\R)$.

Let $p = \lambda y_1 + (1-\lambda) y_2$ for some $\lambda\in (0,1)$, $y_i\in S_i$, such that $\ell(p)=0$. Consider then the line $L$ spanned by $p$ and $p_1$, which satisfies $\ell(L)=0$. Since $p_1$ does not belong to $\conv X$, there exists a real point $q\in L$ such that $q\in \partial \conv X$. Since $\partial_a \conv X\subset (X^{[2]})^*$, we can write $q$ as a convex combination of real points $x_1,x_2 \in X(\R)$ where these two points belong to two different connected components of $X(\R)$.
Since $\partial_a \conv X = V_1 \cup V_4$, then $q\in V_i$ for $i=1$ or $4$. Therefore, $x_1, x_2, q \in V_i$ and since this is a quadric then the line spanned by the three points is in the ruling of $V_i$. By the previous part of the proof, for $i=1,4$ if a line of the ruling of $V_i$ meets $X$ in two real points, then the latter are forced to lie on the same connected component of $X$. This gives a contradiction, since $x_1, x_2$ must lie on two different connected components.
In conclusion, we have that
\begin{equation}
    \partial_a \conv X = V_1 \cup V_2 \qquad \hbox{or} \qquad \partial_a \conv X = V_3 \cup V_4.
\end{equation}
\end{proof}
\begin{remark}
    We point out that the pairs of quadrics in Proposition \ref{prop:pairs_quadrics_D4} are formed by consecutive quadrics in the pencil, such that the segment connecting them (the one that does not have other singular quadrics) is given by quadrics with signature $(4,1,0)$. In both pairs we have one quadric whose ruling connects points of the same connected component of $X$ (namely $V_1$ and $V_4$), and one quadric whose ruling connects points of different connected components of $X$ (namely $V_2$ and $V_3$).
\end{remark}

In conclusion, the algebraic boundary of a Del Pezzo surface consists of two singular quadrics of the associated pencil, with signature $(3,1,1)$. 
We summarize our results for a Del Pezzo surface in the following theorem.

\begin{theorem}\label{thm:del_pezzo}
Let $X\subset\PP^4$ be a smooth real Del Pezzo surface of degree $4$. Then there is an affine chart $\:\PP^4\setminus H$ of $\:\PP^4$ such that $X(\R)\cap (\PP^4\setminus H)$ is compact if and only if $X$ does not contain any real line.
There are three such topological types with $X(\R)$ homeomorphic to $S^2$, $S^1\times S^1$ or the disjoint union $S^2\sqcup S^2$ of two copies of $S^2$, namely $Q^{3,1}(0,4)$, $Q^{2,2}(0,4)$, and $\mathbb{D}_4$. Then 
\begin{equation}
   \partial_a (\conv X) = V_1 \cup V_2, \qquad \deg \partial_a (\conv X) = 4.
\end{equation}
Here $V_1, V_2$ are two singular quadrics of the pencil having signature $(3,1,1)$. For the types $Q^{3,1}(0,4)$, $Q^{2,2}(0,4)$ there is a unique choice; for the type $\mathbb{D}_4$ they are one of the two possible pairs, according to Proposition \ref{prop:pairs_quadrics_D4}.
\end{theorem}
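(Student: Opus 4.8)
The plan is to reorganize the results of this section and to fill two small gaps: the equivalence between compactness of $X(\R)$ in some affine chart and the absence of real lines on $X$, and the fact that \emph{both} admissible quadrics genuinely occur, so that the degree of the algebraic boundary is $4$ rather than $2$.

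First I would establish the compactness criterion. For the ``only if'' direction: if $X$ contains a real line $L$, then every hyperplane $H\subset\PP^4$ meets $L$, so $X(\R)\cap H\supseteq L(\R)\cap H\neq\emptyset$; since a smooth irreducible surface has purely $2$-dimensional real locus wherever it is nonempty, the curve $X(\R)\cap H$ contains no connected component of $X(\R)$, and $X(\R)\setminus(X(\R)\cap H)$ is thus a non-closed, hence non-compact, subset of $X(\R)$. For the converse, I invoke the classification \cite{russo:delPezzo} (see also \cite[Corollary 3.2]{russo:delPezzo}): a real degree-$4$ Del Pezzo surface without real lines is either $(Q^{3,0}\times Q^{3,0})(0,4)$, whose real locus is empty so that $\conv X=\emptyset$ is trivially compact, or one of $Q^{3,1}(0,4)$, $Q^{2,2}(0,4)$, $\mathbb{D}_4$; for these the defining pencil contains a real singular quadric of signature $(3,1,1)$ by Table~\ref{table:Del_Pezzo_topo}, and projecting from its vertex (which is off $X$ since $X$ is a smooth complete intersection) produces, exactly as carried out before the theorem, an affine chart $\PP^4\setminus H$ in which $X(\R)$ is compact. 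That there are exactly three topological types, with $X(\R)$ homeomorphic to $S^2$, $S^1\times S^1$ or $S^2\sqcup S^2$, and that the signatures in the pencil are constant within each type, follows from \cite{russo:delPezzo} and the connectedness of the topological families \cite[Corollary 5.6]{Kollar:RealAlgebraicSurfaces}, as recorded in Table~\ref{table:Del_Pezzo_topo}.

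Next I would confine the algebraic boundary. By \eqref{eq:formula_boundary_ch} and biduality \cite[Theorem 15.24]{Harris:AlgebraicGeometry}, $\partial_a(\conv X)\subseteq\bigcup_{k=2}^{4}(X^{[k]})^*$, since $(X^{[1]})^*=X$ is not a hypersurface. Proposition~\ref{prop:DelPezzo_3} discards $(X^{[3]})^*$, a union of planes of codimension $2$, and Propositions~\ref{prop:DelPezzo_4} and~\ref{prop:delPezzox4vsx2} discard the $40$ hyperplanes of $(X^{[4]})^*$, which support only edges of $\conv X$ already contained in rulings of quadrics counted in $X^{[2]}_{(D)}$, so that their Zariski closures are $1$-dimensional. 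By Proposition~\ref{prop:DelPezzo_2} the remaining candidates are the five singular quadrics of the pencil, and the curvature obstruction \cite[Section 2.5]{Schneider:BrunnMinkowskiTheory} removes the quadrics of signature $(2,2,1)$, which are cones over hyperboloids and thus have a negative principal curvature at every smooth point. By Table~\ref{table:Del_Pezzo_topo} this leaves exactly the two signature-$(3,1,1)$ quadrics for $Q^{3,1}(0,4)$ and $Q^{2,2}(0,4)$, and four of them for $\mathbb{D}_4$, narrowed to a single admissible pair by Proposition~\ref{prop:pairs_quadrics_D4}. In all cases $\partial_a(\conv X)$ lies in a union of at most two quadrics of signature $(3,1,1)$.

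The last step, which I expect to be the main obstacle, is to show that both of these quadrics actually occur. Fix the chart $\PP^4\setminus H$ constructed in the first step, with $H$ passing through the vertex $p$ of a signature-$(3,1,1)$ quadric $V$. As observed before the theorem, in this chart the pullbacks of the tangent hyperplanes of the base quadric $p(X)$ are supporting hyperplanes of $\conv X$, touching it along the segments of the ruling lines of $V$ that join the (two real) points of the corresponding fibre of the $2{:}1$ map $X\to p(X)$; as the base point varies these segments sweep a $3$-dimensional, hence Zariski-dense, subset of the irreducible threefold $V$ contained in $\partial(\conv X)$, so $V\subseteq\partial_a(\conv X)$. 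If $\partial_a(\conv X)$ were equal to $V$, then $\partial(\conv X)$ --- a compact $3$-manifold, being the boundary of the full-dimensional compact convex body $\conv X$ --- would be contained in $V(\R)$, which in this chart is a connected, non-compact, smooth $3$-manifold, the vertex $p$ having been sent to infinity; by invariance of domain $\partial(\conv X)$ would be open, and being compact also closed, in $V(\R)$, hence equal to $V(\R)$ or empty, both absurd. So $\partial_a(\conv X)$ has at least two irreducible components, hence by the previous step exactly two quadrics of signature $(3,1,1)$ --- uniquely determined for $Q^{3,1}(0,4)$ and $Q^{2,2}(0,4)$, and the admissible pair containing $V$ for $\mathbb{D}_4$ --- and $\deg\partial_a(\conv X)=2+2=4$. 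The crux here is exactly the exclusion of a single cone of signature $(3,1,1)$ as the whole algebraic boundary; the non-compactness plus invariance-of-domain observation above is the cleanest route I see, an alternative via the unbounded nappes of the cone being possible but more chart-dependent.
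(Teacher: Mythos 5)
Your proposal follows the paper's overall structure — compactness criterion via the line obstruction and projection from a cone vertex, reduction to $X^{[2]}_{(D)}$ via the dimension and curvature arguments, and Proposition~\ref{prop:pairs_quadrics_D4} for type $\mathbb{D}_4$ — so the first two thirds are essentially a reorganization of the paper's own proof and are correct as such (the ``only if'' direction of the compactness criterion is stated quickly but the idea is right: $X(\R)\cap H$ is at most one-dimensional so it cannot be clopen in the surface $X(\R)$).

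The genuinely distinctive contribution is the final step, and it is worth calling out: the paper shows that each $(3,1,1)$-cone is a boundary component in \emph{some} affine chart, and that (for $\mathbb{D}_4$) once two cones appear they must be a compatible pair, but it never explicitly rules out the scenario where $\partial_a(\conv X)$ consists of a \emph{single} quadric in the chosen chart. Your invariance-of-domain argument closes that gap cleanly: in the chart with the vertex $p$ at infinity, $V(\R)\setminus\{p\}\cong S^2\times\R$ is a connected, non-compact, smooth $3$-manifold; $\partial(\conv X)\cong S^3$ embedded without boundary would then be open and closed, hence all of $V(\R)\setminus\{p\}$, contradicting compactness. Combined with the swept family of chords showing $V\subseteq\partial_a(\conv X)$ and the already-established upper bound, this forces exactly two components of degree $2$ each. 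One small caveat: your argument as written is tied to the chart with $p$ at infinity. If one wants the conclusion for an arbitrary chart in which $X(\R)$ is compact, the same idea works but requires the short extra case distinction depending on whether the vertex is finite or at infinity (and, in the finite case, whether it lies on $\partial(\conv X)$); you gesture at this with the ``unbounded nappes'' remark, and it would be worth spelling out, since for $\mathbb{D}_4$ the statement really is chart-dependent.
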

We want to point out the big improvement that we obtained with this case-by-case analysis: Table \ref{table:degreesXk} gives a variety of degree $12+26+40+40$ in $\PP^4$ to be dualized in order to find the algebraic boundary of $X$. In the end, taking into account real and convex aspects, only $4$ of the $118$ degrees and $2$ of the $113$ irreducible components are relevant.

\begin{example}\label{ex:DelPezzo}
Consider the two quadrics $V_0,V_{\infty}$ defined respectively by the polynomials
\begin{align}
    f_0 &= x_0^2-x_1^2-x_2^2-x_3^2-x_4^2,\\
    f_{\infty} &= 2 x_2^2-2 x_1 x_3+2 x_0 x_4.
\end{align}
Let $X\subset \PP^4$ be the smooth Del Pezzo surface with ideal $I = \langle f_0, f_{\infty} \rangle$.
\begin{figure}[h]
    \centering
    \includegraphics[width=0.36\textwidth]{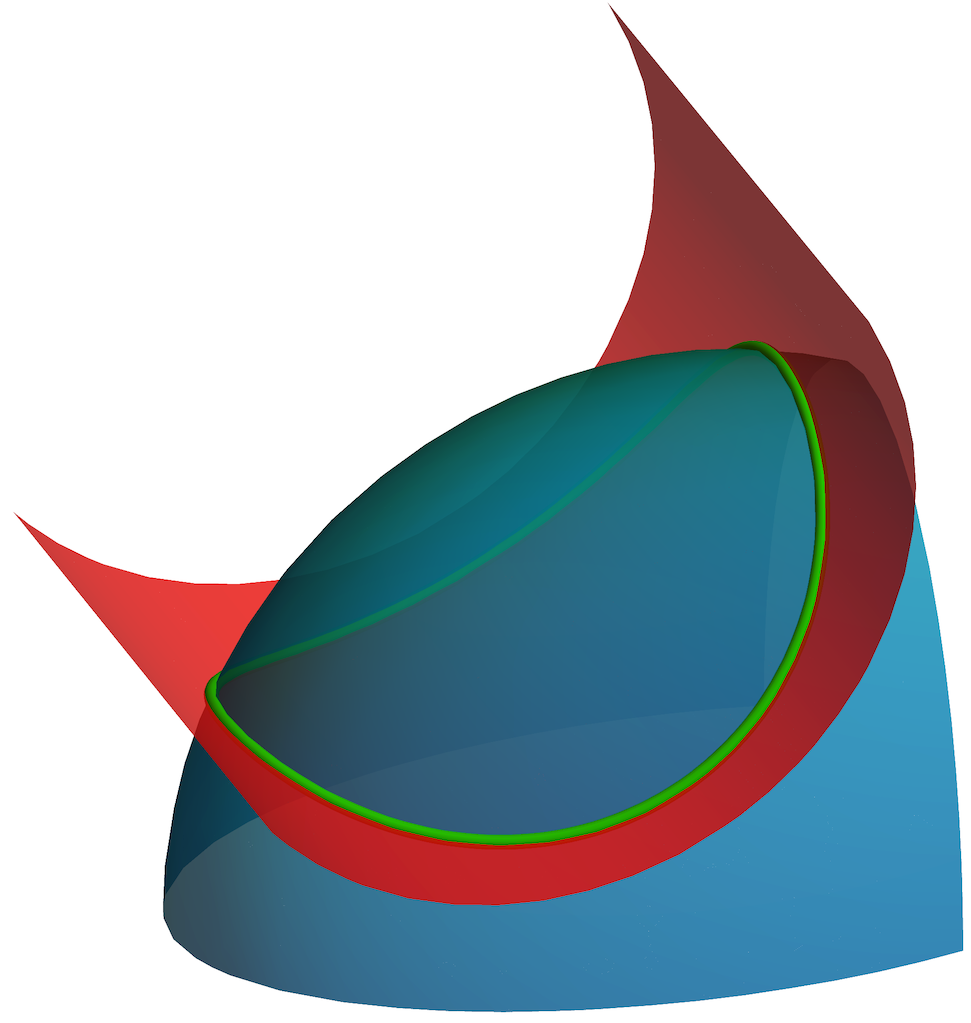}
    \caption{A hyperplane section of the Del Pezzo surface in Example \ref{ex:DelPezzo}: here the green curve. The blue and the red quadric are \eqref{eq:q1} and \eqref{eq:q2} respectively.}
    \label{fig:DelPezzo}
\end{figure}
The associated pencil of quadrics $\mathscr{L}$ is defined by the pencil of matrices
\begin{equation}
    \begin{bmatrix}
    -\lambda & 0 & 0 & 0 & \mu \\
    0 & \lambda & 0 & -\mu & 0 \\
    0 & 0 & \lambda+2\mu & 0 & 0 \\
    0 & -\mu & 0 & \lambda & 0 \\
    \mu & 0 & 0 & 0 & \lambda
    \end{bmatrix}
\end{equation}
for $[\lambda,\mu ] \in \PP^1$.
This Del Pezzo surface is of type $Q^{3,1}(0,4)$, hence it is topologically $S^2$ and among the five singular quadrics of the pencil, three are real. The signature of their matrices is either $(3,1,1)$ or $(2,2,1)$. These quadrics are:
\begin{align}
    (3,1,1) &: && x_0^2-x_1^2-3 x_2^2+2 x_1 x_3-x_3^2-2 x_0 x_4-x_4^2 = 0, \label{eq:q1} \\
    & && 2 x_0^2-2 x_1^2-2 x_1 x_3-2 x_3^2+2 x_0 x_4-2 x_4^2 = 0, \label{eq:q2} \\
    (2,2,1) &: && x_0^2-x_1^2+x_2^2-2 x_1 x_3-x_3^2+2 x_0 x_4-x_4^2 = 0,
\end{align}
and can be obtained for $[\lambda,\mu] = [1,1], [1,-1], [2,-1]$ respectively. Figure \ref{fig:DelPezzo} shows the hyperplane section $2 x_1 - 1 = 0$ of the Del Pezzo surface and the two $(3,1,1)$ quadrics, in the affine chart $\{x_0 \neq 0\}$. Notice that the hyperplane section of the convex hull of $X$ is not the convex hull of the hyperplane section of $X$.
\end{example}

\begin{example}\label{ex:DelPezzoS1S1}
Consider the two quadrics $V_0,V_{\infty}$ defined respectively by the polynomials:
\begin{align}
    f_0 &= 2 x_0^2 - 3 x_1^2 - x_2^2 + x_4^2,\\
    f_{\infty} &= 3 x_0^2 - 2 x_1^2 - x_3^2 - x_4^2.
\end{align}
Let $X\subset \PP^4$ be the smooth Del Pezzo surface with ideal $I = \langle f_0, f_{\infty} \rangle$.
\begin{figure}[!h]
    \centering
    \includegraphics[width=0.35\textwidth]{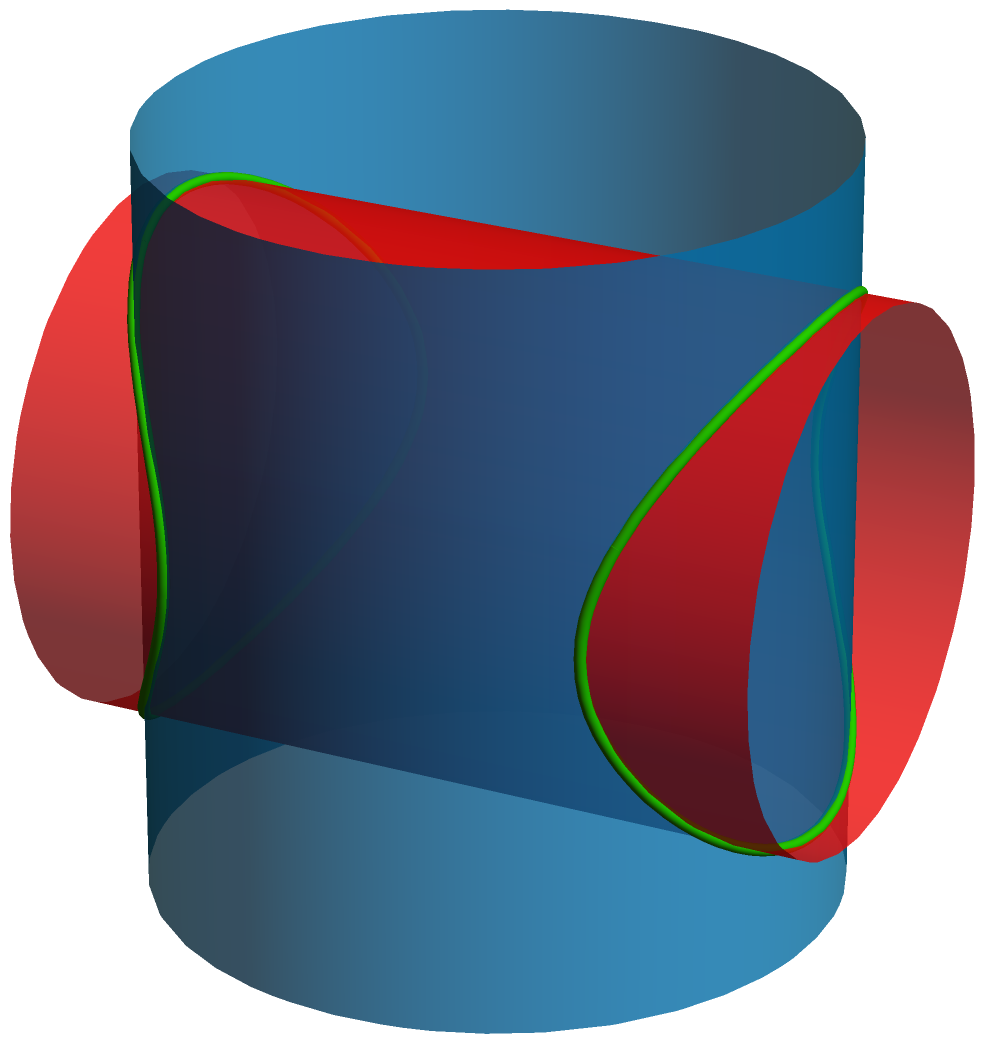}
    \caption{The hyperplane section $\{x_2=1\}$ of the Del Pezzo surface in Example \ref{ex:DelPezzoS1S1}, in the affine chart $\{x_0\neq 0\}$: here the green curve. The red and the blue cylinders are the zero loci of $f_{\infty}$ and the other $(3,1,1)$ quadric, respectively.}
    \label{fig:DelPezzoS1S1}
\end{figure}
The associated pencil of quadrics $\mathscr{L}$ is defined by the pencil of matrices
\begin{equation}
    \begin{bmatrix}
    2 \lambda + 3 \mu & 0 & 0 & 0 & 0 \\
    0 & - 3 \lambda - 2 \mu & 0 & 0 & 0 \\
    0 & 0 & - \lambda - \mu & 0 & 0 \\
    0 & 0 & 0 & -\mu & 0 \\
    0 & 0 & 0 & 0 & \lambda - \mu
    \end{bmatrix}
\end{equation}
for $[\lambda,\mu ] \in \PP^1$.
This Del Pezzo surface is of type $Q^{2,2}(0,4)$, hence it is topologically $S^1\times S^1$ and all the five singular quadrics of the pencil are real. The are three matrices with signature $(2,2,1)$ and two with signature $(3,1,1)$. These quadrics are:
\begin{align}
    (3,1,1) &: && 5 x_0^2 - 5 x_1^2 - x_2^2 - x_3^2 = 0, \qquad\qquad\\
    & && 3 x_0^2 - 2 x_1^2 - x_3^2 - x_4^2 = 0, \qquad\qquad\\
    (2,2,1) &: && 2 x_0^2 - 3 x_1^2 - x_2^2 + x_4^2 = 0, \qquad\qquad\\
    & && 5 x_1^2 + 3 x_2^2 - 2 x_3^2 - 5 x_4^2 = 0, \qquad\qquad\\
    & && 5 x_0^2 + 2 x_2^2 - 3 x_3^2 - 5 x_4^2 = 0, \qquad\qquad
\end{align}
and can be obtained for $[\lambda,\mu] = [1,1], [0,1] , [1,0], [-3,2], [-2,3]$ respectively. The real part of $X$ is compact in the affine chart $\{x_0=1\}$. Figure \ref{fig:DelPezzoS1S1} shows $X$ and the two $(3,1,1)$ quadrics in the hyperplane section $\{x_2=1\}$ of the affine chart $\{x_0\neq 0\}$.
\end{example}

\begin{example}\label{ex:DelPezzoS2S2}
Consider the two quadrics $V_0,V_{\infty}$ defined respectively by the polynomials:
\begin{align}
    f_0 &= 2 x_0^2 - x_1^2 - x_3^2 - x_4^2,\\
    f_{\infty} &= x_0^2 - 2 x_1^2 - x_2^2 - x_4^2.
\end{align}
Let $X\subset \PP^4$ be the smooth Del Pezzo surface with ideal $I = \langle f_0, f_{\infty} \rangle$.
\begin{figure}[h!]
    \centering
    \includegraphics[width=0.4\textwidth]{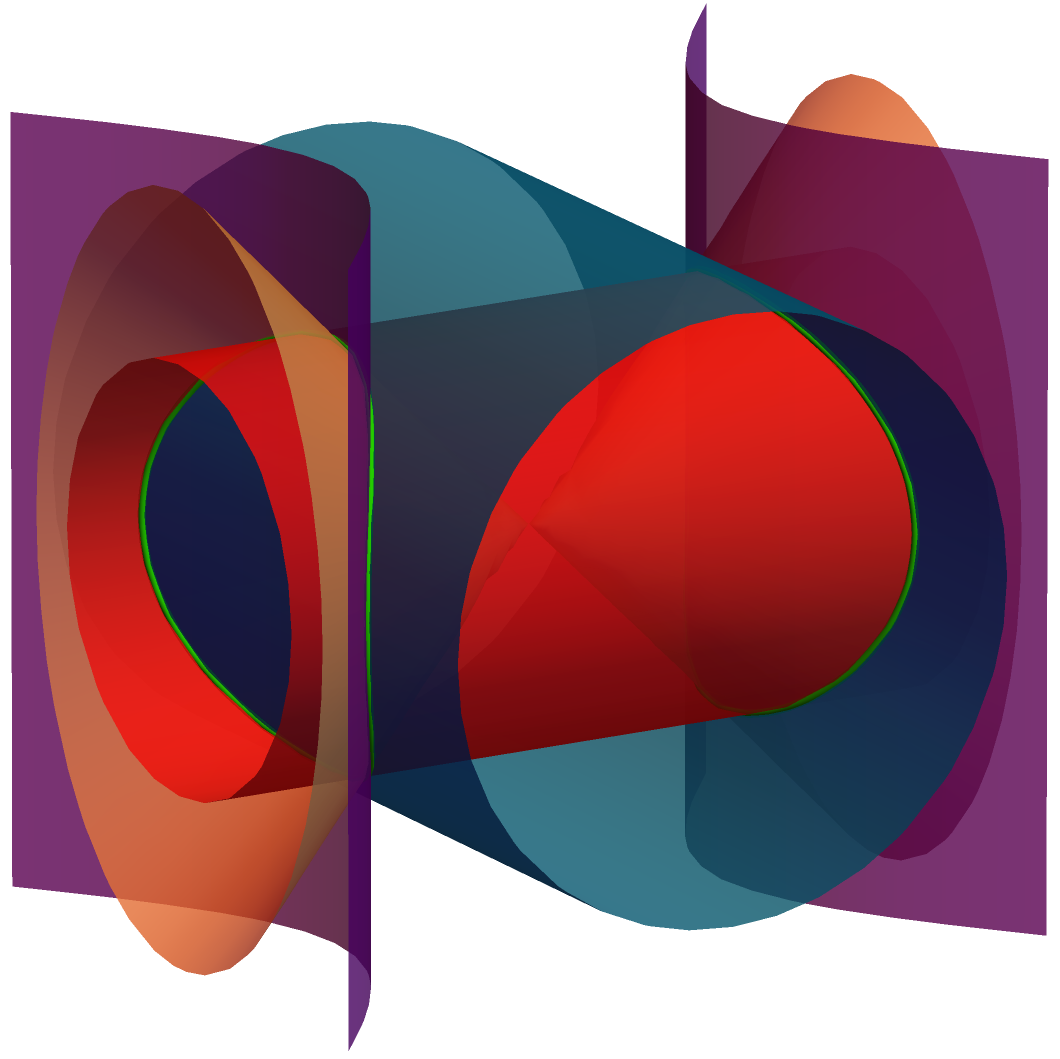}
    \includegraphics[width=0.45\textwidth]{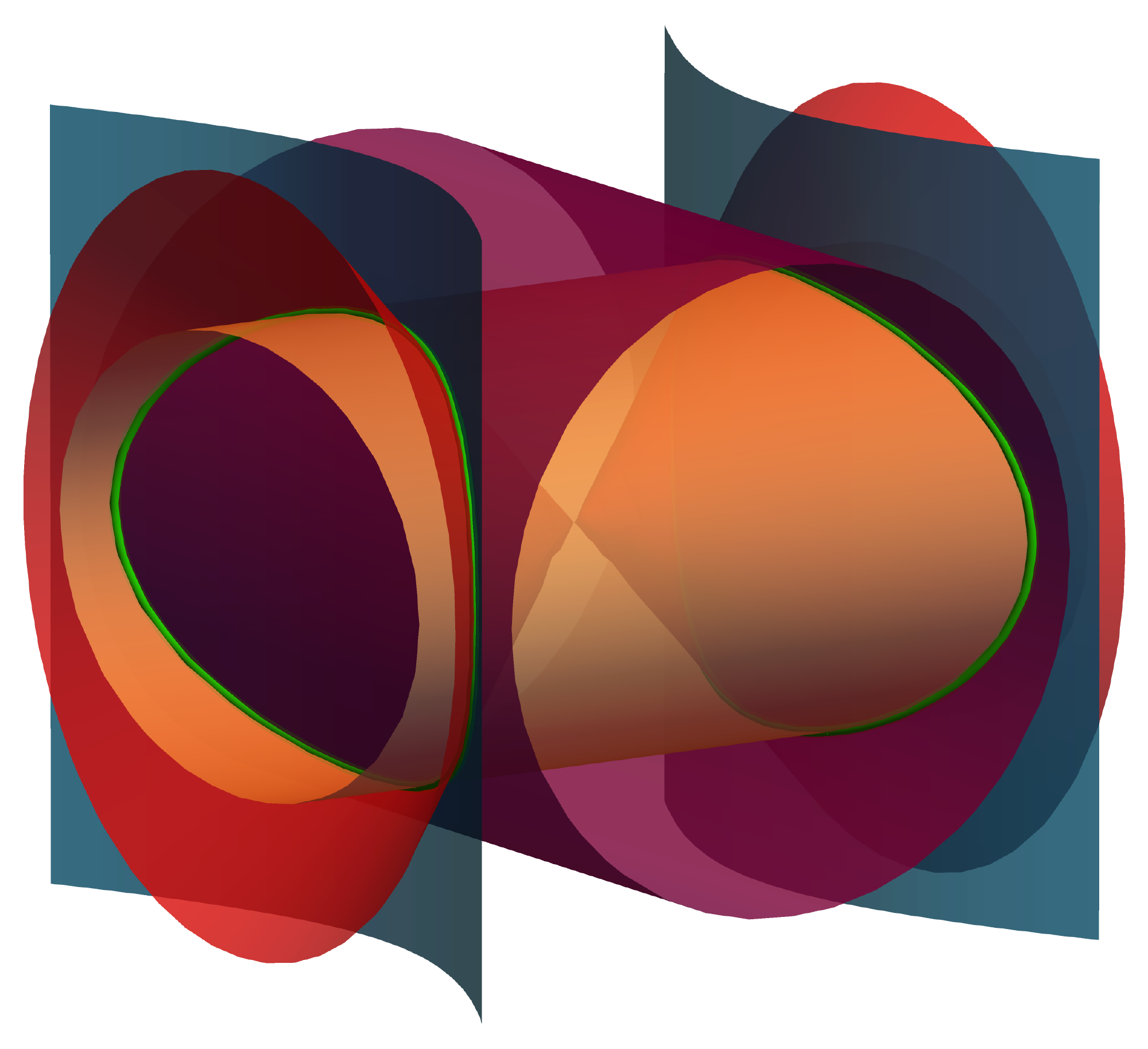}
    \caption{The hyperplane section $\{x_1=1\}$ of the Del Pezzo surface in Example \ref{ex:DelPezzoS2S2}, in the affine charts $\{x_0\neq 0\}$ (left) and $\{x_3\neq 0\}$ (right). The section of the Del Pezzo surface is here the green curve. The blue and red as well as the orange and purple $(3,1,1)$ quadrics are the two pairs of possible algebraic boundaries of $X$, according to Proposition \ref{prop:pairs_quadrics_D4}.}
    \label{fig:DelPezzoS2S2}
\end{figure}
The associated pencil of quadrics $\mathscr{L}$ is defined by the pencil of matrices
\begin{equation}
    \begin{bmatrix}
    2 \lambda + \mu & 0 & 0 & 0 & 0 \\
    0 & - \lambda - 2 \mu & 0 & 0 & 0 \\
    0 & 0 & - \mu & 0 & 0 \\
    0 & 0 & 0 & - \lambda & 0 \\
    0 & 0 & 0 & 0 & - \lambda - \mu
    \end{bmatrix}
\end{equation}
for $[\lambda,\mu ] \in \PP^1$.
This Del Pezzo surface is of type $\mathbb{D}_4$, hence it is topologically $S^2\sqcup S^2$ and all the five singular quadrics of the pencil are real. The signature of one of their matrices is $(2,2,1)$, and it is $(3,1,1)$ for the other four. These quadrics are:
\begin{align}
    (3,1,1) &: && \textcolor{myteal!80!blue}{V_1}\!: \; 2 x_0^2 - x_1^2 - x_3^2 - x_4^2 = 0, \qquad\qquad\\
    & && \textcolor{myred!70!red}{V_2}\!: \; x_0^2 - 2 x_1^2 - x_2^2 - x_4^2 = 0, \qquad\qquad\\
    & && \textcolor{myorange!95!red}{V_3}\!: \; 3 x_1^2 + 2 x_2^2 - x_3^2 + x_4^2 = 0, \qquad\qquad\\
    & && \textcolor{mypurple}{V_4}\!: \; x_0^2 + x_1^2 + x_2^2 - x_3^2 = 0, \qquad\qquad\\
    (2,2,1) &: && \qquad 3 x_0^2 + x_2^2 - 2 x_3^2 - x_4^2 = 0, \qquad\qquad
\end{align}
and can be obtained for $[\lambda,\mu] = [0,1] , [1,0], [1,-2], [1,-1], [2,-1]$ respectively. The teal labels refer to Figure \ref{fig:pencil}. The real part of $X$ is compact in the affine chart $\{x_0=1\}$. Figure \ref{fig:DelPezzoS2S2} shows $X$ and the four $(3,1,1)$ quadrics in the two possible configurations.
\end{example}

\section{Bordiga}\label{sec:bordiga}
The smooth surface $X$ considered in the current section is named after Giovanni Bordiga. This surface is relevant for instance in Computer Vision. In \cite{BNT:BordigaComputerVision}, the authors study the reconstruction of an image obtained from three cameras; this can be modelled as three projections from $\PP^4$ to $\PP^2$. In this framework, the Bordiga surface is the critical locus, i.e., the set of points that do not admit a unique reconstruction from the planar images.

Every Bordiga surface can be realized as a complex algebraic surface as the blow up $\mathrm{Bl}_{p_1,\ldots,p_{10}} \PP^2$ of ten generic points in the complex projective plane, embedded in $\PP^4$ via the divisor $D = 4\pi^*(L) - \sum_{i=1}^{10} E_i$. 
Equivalently, it can be defined as the zero locus of the $3\times 3$ minors of a $3\times 4$ matrix with homogeneous linear entries in five variables \cite[Theorem 9.3.5]{Dolgachev:ClassicalAlgebraicGeometry}.
For more details on the geometry of this surface we refer to \cite{Bordiga:SuperficieSestoOrdine, Dolgachev:ClassicalAlgebraicGeometry}. For the real picture, we only consider real algebraic surfaces that arise as the blow up of the real projective plane in a real set of $10$ points. 

We assume that the ten points $p_1,\ldots,p_{10}$ are in general position in the following sense: no three points are collinear, no six points on a conic, and no ten points on a cubic. Furthermore, the unique cubic through any nine out of the ten points is non-singular and there is no quartic through the ten points that has a triple point at one of them. Finally, there is a unique quartic that has nodes at $p_i$ and $p_j$ and passes through the other eight points for any choice of $i\neq j$. Each of these assumptions is a statement about the rank of a linear system of equations in the coefficients of a curve and/or a property of the unique solution of such a system. This can therefore be checked easily for any input of 10 points in $\PP^2$. To make sure that this holds generically, it is enough to produce one set of ten points satisfying all conditions above.

\subsection[The complex picture]{The $\C$omplex picture}
This situation is similar to the Del Pezzo case: we fix a basis $\{q_0,\ldots,q_4\}\subset \C[x_0,x_1,x_2]_{4}$ for the space of plane quartics through $p_1,\ldots,p_{10}$. With this choice of basis, the rational map $\varphi : \PP^2 \dashrightarrow \PP^4$ induced by $D$ maps $x\in \PP^2$ to $[q_0(x),\ldots,q_4(x)]$.
Its base locus coincides with the ten points $p_1,\ldots,p_{10}$. The Bordiga surface $X$ is the closure of the image of $\varphi$. Then, $\varphi$ is birational to $X$ and we have the following diagram:
\begin{equation}
\begin{tikzcd}
\textrm{Bl}_{p_1, \ldots ,p_{10}} \PP^2 \arrow[dr, "\widetilde{\varphi}"] \arrow[d, "\pi"] & \\
\PP^2 \arrow[r, dashed, "\varphi"] & X \subset \PP^4
\end{tikzcd}
\end{equation}
Here $\widetilde{\varphi}$ is an isomorphism between the blow up and $X$. Notice that the image of an exceptional curve $E_i \subset \textrm{Bl}_{p_1, \ldots ,p_{10}} \PP^2$ is a line in $\PP^4$. As for the Del Pezzo case in Section \ref{sec:del_pezzo}, a hyperplane $u^\perp$ identifies two curves: the hyperplane section $C_u = u^\perp\cap X \subset \PP^4$ and the plane quartic $Q_u = \varphi^*(C_u)$; we can compare their singularities via \eqref{eq:multiplicities_singularities}, and denote
\begin{equation}
    \widetilde{Q}_u = \widehat{Q}_u + \sum_{i=1}^{10} (\textrm{mult}_{p_i} Q_u - 1) E_i,
\end{equation}
where $\widehat{Q}_u$ is the strict transform of $Q_u$ and $\widetilde{Q}_u$ is the pullback via $\widetilde{\varphi}$ of the class of $C_u$.
Our goal is to analyze the irreducible components of the varieties $X^{[k]}$ by looking at the plane curves $Q_u$ that correspond to points $u\in X^{[k]}$. Since the aim is to describe $\partial_a (\conv X)$, we only care about the components whose dual varieties are hypersurfaces. The results in the cases $k=2,3,4$ are summarized in Propositions \ref{prop:Bordiga_2}, \ref{prop:Bordiga_3}, \ref{prop:Bordiga_4}. 

A node on $Q_u$ at a base point corresponds to two singularities of $C_u$, whereas a node outside the base locus gives one singularity of $C_u$, so our following case analysis for $X^{[k]}$ is again based on partitions of $k$ into parts that are either $1$ or $2$. Since the curve $Q_u$, corresponding to a hyperplane section of the Bordiga surface, is a plane quartic of arithmetic genus $3$, the case analysis is more involved compared to the case of the Del Pezzo surfaces in Section~\ref{sec:del_pezzo}, for which $Q_u$ is a plane cubic of arithmetic genus $1$.

\subsubsection{The irreducible components for k=2}

We first describe $X^{[2]}$ which is, as expected, a surface in $(\PP^4)^*$. Its points come from plane quartics $Q_u$, such that $\widetilde{Q}_u$ has two nodes. There are two families of plane quartics for which this happens, corresponding to the two partitions $2 = 1\cdot 2$ (case (A)) and $2 = 2\cdot 1$ (case (B)).

\paragraph{(A): a node at $p_i$.} Assume that the plane quartic $Q_u$ has one node at one of the base points $p_i$. This is a linear condition on its coefficients, hence the set of corresponding $u\in (\PP^4)^*$ is the union of $10$ copies of projective planes $\PP^2\subset (\PP^4)^*$, one for each choice of the node at $p_1,\ldots,p_{10}$.

\paragraph{(B): two nodes in $\PP^2$.} Assume that $Q_u$ has two nodes somewhere in $\PP^2$. Using Kazarian's formulae for $D = 4 L$, $K_{\PP^2} = -3 L$, i.e., $\mathsf{d} = 16, \mathsf{k} = -12, \mathsf{s} = 9, \mathsf{x} = 3$, we find that this surface has degree $225$. It remains an open problem to compute $\deg (X^{[2]}_{(B)})^*$ (that is to say the degree of the dual varieties for each irreducible component of $X^{[2]}$ for which $Q_u$ is a curve of type (B), for a generic point $u$ on this component). 

These are all the possible cases in which a curve $\widetilde{Q}_u$ can have two singularities, for a plane quartic $Q_u$. If $Q_u$ is irreducible, then it can have either a node at one of the fixed points $p_i$, or two nodes in the plane. As soon as the curve becomes reducible, it has at least three singularities, hence it is not relevant for $X^{[2]}$.
On $X^{[2]}$ and $\left( X^{[2]} \right)^*$ we therefore have:

\begin{proposition}\label{prop:Bordiga_2}
Let $X\subset \PP^4$ be a generic Bordiga surface. Then $X^{[2]}\subset (\PP^4)^*$ is the union of $10$ projective planes $\PP^2$ in $(\PP^4)^*$ and the surface $X^{[2]}_{(B)}$. Therefore, $\deg X^{[2]} = 235$.\\
The dual variety $\left( X^{[2]} \right)^* \subset \PP^4$ is the union of $10$ projective lines $\PP^1$ and $(X^{[2]}_{(B)})^*$. 
\end{proposition}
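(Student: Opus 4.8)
The plan is to exploit the dictionary of \eqref{eq:multiplicities_singularities}: a point $u\in X^{[2]}$ corresponds to a plane quartic $Q_u$ through $p_1,\dots,p_{10}$ for which the transform $\widetilde{Q}_u$ acquires exactly two singular points, and a node of $Q_u$ contributes to this count either $2$ (if it lies at a base point $p_i$, since then $E_i\subset C_u$ and $\widehat{Q}_u$ meets $E_i$ in $\mathrm{mult}_{p_i}Q_u=2$ points, which become the two singular points of $C_u$) or $1$ (if it lies off the base locus). I would therefore organize the argument by the partitions of $2$ into parts equal to $1$ or $2$, which produces exactly the two cases $2=1\cdot 2$ (case (A)) and $2=2\cdot 1$ (case (B)), and then verify exhaustiveness and compute the degrees.

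For case (A), demanding $\mathrm{mult}_{p_i}Q_u\ge 2$ imposes, inside the $\PP^4\cong(\PP^4)^*$ of quartics through the ten points, two further linear conditions (the vanishing of the two first partials at $p_i$, the value at $p_i$ being already forced), so it cuts out a $\PP^2\subset(\PP^4)^*$; there are ten such, one per choice of $p_i$, and each has degree $1$. Two genericity checks are needed here: first, that a general member of such a $\PP^2$ is irreducible with $\widehat{Q}_u$ smooth and with no further node at any $p_j$, so that $\widetilde{Q}_u$ has precisely two singular points (this uses the hypothesis that the quartic with nodes at $p_i,p_j$ through the other eight points is unique, hence does not move in a family); second, that these ten planes are genuine irreducible components of $X^{[2]}$, not contained in the closure of case (B), which holds because a general member of case (B) has both nodes off the base locus.

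For case (B), the relevant quartics are irreducible with two nodes in $\PP^2\setminus\{p_1,\dots,p_{10}\}$; they form a codimension-$2$ locus in that $\PP^4$, hence the surface $X^{[2]}_{(B)}$, whose degree is computed by Kazarian's formula \cite[Theorem 10.1]{Kazarian:Multisingularities} applied to the linear system $D=4L$ on $\PP^2$ (so $\mathsf{d}=16$, $\mathsf{k}=-12$, $\mathsf{s}=9$, $\mathsf{x}=3$), giving $N_{A_1^2}=225$. Exhaustiveness then follows from B\'ezout's theorem: if $Q_u$ is reducible — a cubic and a line, two conics, a conic and two lines, or four lines — the intersections of its components already contribute at least three singular points, so such $Q_u$ cannot occur for $u\in X^{[2]}$; and an irreducible two-nodal quartic either has a node at a base point (case (A)) or does not (case (B)). Summing, $\deg X^{[2]}=10\cdot 1+225=235$, in agreement with the value $N_{A_1^2}=235$ obtained from Kazarian's formula on the blow-up with the parameters of Table~\ref{table:divisors_parameters}.

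Finally, for the dual varieties: the dual variety of a linearly embedded $\PP^2\subset(\PP^4)^*$ is the linear space $\PP^{4-2-1}=\PP^1\subset\PP^4$, so the ten planes of case (A) dualize to ten lines, while the case-(B) part dualizes to $(X^{[2]}_{(B)})^*$, whose degree is left open. This gives the stated description of $(X^{[2]})^*$. The main obstacle in this argument is the exhaustiveness and irreducible-component bookkeeping in cases (A) and (B) — ruling out that the ten planes get absorbed into $X^{[2]}_{(B)}$ and that $Q_u$ could be reducible — both of which ultimately rest on the genericity hypotheses imposed on the ten points $p_1,\dots,p_{10}$.
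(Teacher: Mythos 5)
Your proposal follows essentially the same approach as the paper: the same case split by partitions ($2 = 1\cdot 2$ for a base-point node, $2 = 2\cdot 1$ for two nodes off the base locus), the same identification of case (A) as ten copies of $\PP^2$, the same application of Kazarian's formula with $\mathsf d = 16$, $\mathsf k = -12$, $\mathsf s = 9$, $\mathsf x = 3$ to get degree $225$ for case (B), the same B\'ezout exhaustiveness argument ruling out reducible quartics, and the same duality computation $(\PP^2)^* = \PP^1$. The genericity checks you add (uniqueness of the binodal quartic at $p_i, p_j$, and the ten planes not being absorbed into $X^{[2]}_{(B)}$) are sensible precisifications that the paper leaves implicit, and the cross-check against $N_{A_1^2} = 235$ on the blow-up is a nice sanity verification, but the underlying proof is the same.
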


\begin{remark}
Let $X\subset \PP^4$ be a Bordiga surface and let $Y = X^{[2]}_{(B)}\subset X^{[2]}$ be the subvariety of degree $225$ of $X^{[2]}_{(B)}$. We would like to compute the degree of $Y^*$. Here, we want to discuss why the approach in Proposition~\ref{prop:Bordiga_3} for $k=3$ does not immediately adapt to this surface case.

The dual variety $Y^*$ is the closure of the union of all lines $\ell\subset \PP^4$ that are spanned by two points $p,q\in X$ such that $T_p X$ and $T_q X$ are contained in a common hyperplane $H$, where $[H]\in Y$. This is a ruling of $Y^*$ by a surface $S\subset {\textrm Gr}(2,5)$ of lines in $\PP^4$. The surface $S^\perp = \{\ell^\perp \colon [\ell]\in S\}\subset {\textrm{Gr}}(3,5)$ of all planes that are dual to lines in $S$ is, by biduality, the image of the map $\gamma\colon Y \dashrightarrow {\textrm{Gr}}(3,5)$, $[H]\mapsto T_{[H]} Y$.

The degree of $Y^*$ in terms of the Grassmannian is the number of lines $[\ell]\in S$ that intersect a fixed generic line $L\subset \PP^4$. Dually, this condition counts the cardinality of the set of all planes $[\ell^\perp]\in S^\perp$ that intersect the generic plane $L^\perp$ in at least a $1$-dimensional projective space (dual to the plane spanned by $L$ and such an $\ell$ intersecting $L$). The problem is now to compute the degree of this surface $S^\perp$ in the Plücker embedding. A central issue is that $Y$ is not known to be smooth (and we do not expect it to be). For this approach to succeed, we would need to have a good understanding of the tangent bundle of a desingularization of $Y$.
\end{remark}

\subsubsection{The irreducible components for k=3}
The curve $X^{[3]}$ is the closure of the set of quartics $Q_u$ such that $\widetilde{Q}_u$ has three nodes. The relevant partitions are $3 = 1\cdot 2 + 1\cdot 1$ (case (C)) and $3 = 3\cdot 1$ (cases (A) and (B)). We first discuss the reducible cases. The quartic cannot be the union of two conics since they intersect in $4$ points and are therefore not generic points of any irreducible component of $X^{[3]}$.

\paragraph{(A): cubic + line.} There are two possible configurations for a reducible plane quartic $Q_u$ through ten fixed points to have three nodes. Firstly, we have the one-dimensional family of curves $Q_u$ that are the union of a line through two of the ten points with a cubic through the remaining eight. These are all linear conditions on the coefficients of $Q_u$, hence each choice of the pair of points that fixes the line, produces a projective line $\PP^1\subset (\PP^4)^*$. There are $\binom{10}{2} = 45$ such irreducible components of $X^{[3]}$. 

The second possibility arises when the curves $Q_u$ are obtained by fixing the cubic through nine points, and taking its union with a line through the remaining point. By genericity of the base points, the cubic itself is not singular.
So, also in this case we get a projective line $\PP^1\subset (\PP^4)^*$ for each of the $10$ choices of the pencil of lines through a base point $p_i$.
Since a plane cubic intersects a line in three points, the cubic itself cannot be singular for a generic point $u$ on an irreducible component of $X^{[3]}$.

Therefore, these components contribute $45+10$ degrees to $X^{[3]}$. However, they are all projective lines, so their duals are projective planes which have codimension $2$. They are not part of the algebraic boundary of $\conv X$.

\paragraph{(B): rational quartics.} Assume $Q_u$ is a plane quartic with three nodes; the genus-degree formula implies that $Q_u$ is rational. The (Zariski closure of the) family of the corresponding $u$'s is then a Severi variety. The condition that all $Q_u$ must go through $p_1,\ldots,p_{10}$ makes it a curve in $\PP^4$, that we denote by $C_4$, to be consistent with the notation in \cite{Panda:CanonicalClass}. Its degree is classically known to be $620$ \cite{KonMan:GromovWittenClasses}. 
This number can be also computed using Kazarian's formulae.
Let $D = 4L$, $K_{\PP^2} = -3L$; then the degree of the curve of plane quartics with four nodes is $N_{A_1^4} = 675$. However, $55$ degrees come from case (A), so we have to subtract them and we get the correct number $620$. 

We will now compute the degree of the hypersurface dual to $C_4$. This curve was studied extensively by Pandharipande: it is an irreducible curve of arithmetic genus $5447$ and geometric genus $725$, which were computed (and denoted as $g_4$ and $\Tilde{g}_4$ respectively) in \cite[Section 3]{Panda:CanonicalClass}.
We reduce the computation of the degree of this dual variety to the Riemann-Hurwitz formula with the following well-known observation. 

\begin{proposition}
Let $C\subset \PP^n$ be an irreducible curve and let $\mathcal{T}$ be its tangent developable (that is the Zariski closure of the union of all tangent lines to $C$ at smooth points). Then, the degree of $\mathcal{T}$ and the degree of the dual variety $C^*$ agree.
\end{proposition}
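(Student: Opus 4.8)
\emph{Proof proposal.}
The plan is to identify both quantities with one and the same enumerative number: the number of tangent lines of $C$ meeting a general codimension-$2$ linear subspace $\Lambda\subset\PP^n$. First I reduce to the essential case. Replacing $\PP^n$ by the linear span of $C$ changes neither $\mathcal T$ nor $\deg C^*$ (off the span, $C^*$ is a cone over the dual variety of $C$ in the span), so I may assume $C$ nondegenerate; the case of a line is trivial (both degrees equal $1$), and the statement requires $C$ not to be a plane curve (for a plane curve of degree $\geq 2$ it fails) -- a restriction that is automatic in the application to $C_4\subset\PP^4$. Assume then that $C\subset\PP^n$ is nondegenerate and is neither a line nor a plane curve. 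Let $\nu\colon\widetilde C\to C$ be the normalization and $\tau\colon\widetilde C\to\mathrm{Gr}(2,n+1)$, $p\mapsto[\mathbb T_{\nu(p)}C]$, the tangent-line map, with image the irreducible curve $\mathcal B$. Then $\mathcal T=\bigcup_{[\ell]\in\mathcal B}\ell$ and, dually, $C^*=\bigcup_{[\ell]\in\mathcal B}\ell^\perp$, where $\ell^\perp\cong\PP^{n-2}\subset(\PP^n)^*$ is the set of hyperplanes containing $\ell$.

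Fix a general $\Lambda\subset\PP^n$ of dimension $n-2$. Because $C$ is not a plane curve, the surface $\mathcal T$ is swept out exactly once by its tangent lines, so for general $\Lambda$ the intersection $\mathcal T\cap\Lambda$ is finite, lies in the smooth locus of $\mathcal T$, and consists of the single transverse points $\mathbb T_{\nu(p)}C\cap\Lambda$ for the finitely many $p$ with $\mathbb T_{\nu(p)}C\cap\Lambda\neq\emptyset$; hence $\deg\mathcal T=\#\{p\in\widetilde C:\mathbb T_{\nu(p)}C\cap\Lambda\neq\emptyset\}$. Equivalently, the locus $\{[\ell]:\ell\cap\Lambda\neq\emptyset\}$ is the Schubert divisor $\sigma_1(\Lambda)$, a hyperplane section of $\mathrm{Gr}(2,n+1)$ in its Plücker embedding, so $\deg\mathcal T=\mathcal B\cdot\sigma_1(\Lambda)=\deg_{\mathrm{Pl}}\mathcal B$.

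On the dual side, $\deg C^*$ is the number of points of $C^*$ on a general line $M\subset(\PP^n)^*$, and such an $M$ is precisely the pencil of hyperplanes through a general codimension-$2$ subspace $\Lambda\subset\PP^n$. A hyperplane $H\in M$ lies on $C^*$ iff $H\supseteq\mathbb T_{\nu(p)}C$ for some $p$; combined with $H\supseteq\Lambda$ this forces $\mathbb T_{\nu(p)}C$ and $\Lambda$ to span a hyperplane, which in general position (the dimensions being $1$ and $n-2$) happens exactly when $\mathbb T_{\nu(p)}C\cap\Lambda\neq\emptyset$, and then $H$ is uniquely determined. By the biduality theorem \cite[Theorem~15.24]{Harris:AlgebraicGeometry} applied to the curve $C$, a general tangent hyperplane of $C$ is tangent at a single point, so the resulting map $\widetilde C\dashrightarrow C^*$ is birational and $\deg C^*=\#\{p\in\widetilde C:\mathbb T_{\nu(p)}C\cap\Lambda\neq\emptyset\}$ for the same general $\Lambda$. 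Comparing with the previous paragraph gives $\deg\mathcal T=\deg C^*$. In Schubert-calculus language: $\deg C^*=\deg_{\mathrm{Pl}}\mathcal B^\perp$ for $\mathcal B^\perp=\{[\ell^\perp]:[\ell]\in\mathcal B\}\subset\mathrm{Gr}(n-1,n+1)$, and orthogonal complementation $\mathrm{Gr}(2,n+1)\xrightarrow{\sim}\mathrm{Gr}(n-1,n+1)$, $\ell\mapsto\ell^\perp$, is induced by a linear isomorphism of the Plücker spaces and carries $\mathcal B$ to $\mathcal B^\perp$, so $\deg_{\mathrm{Pl}}\mathcal B^\perp=\deg_{\mathrm{Pl}}\mathcal B=\deg\mathcal T$.

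The main obstacle is the multiplicity-one bookkeeping: for a general $\Lambda$, each contributing tangent line must enter $\mathcal T\cap\Lambda$ and $C^*\cap M$ with multiplicity exactly $1$. On the developable side this is the classical fact that in characteristic zero the tangent developable of a curve that is neither a line nor a plane curve is generated exactly once by its tangent lines, together with generic transversality of $\Lambda$; on the dual side it is exactly reflexivity (biduality) for the curve $C$. Both are standard in characteristic zero, and verifying transversality of a general $\Lambda$ in each case is a routine dimension count via generic smoothness.
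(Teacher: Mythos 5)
Your proof is correct and its core is the same as the paper's: identify $\deg\mathcal{T}$ and $\deg C^*$ via the bijection between points of $\mathcal{T}\cap\Lambda$ (for a generic codimension-$2$ subspace $\Lambda\subset\PP^n$) and points of $C^*\cap M$ (for the dual line $M=\Lambda^\perp$), both enumerated by the tangent lines of $C$ that meet $\Lambda$. What you add beyond the paper: the reduction to the nondegenerate case; the explicit observation that the proposition as literally stated fails for plane curves of degree $\geq 2$ (where $\mathcal{T}$ has degree $1$ but $C^*$ does not) -- a hypothesis the paper leaves implicit, though it is harmless for the application to the nondegenerate $C_4\subset\PP^4$; the multiplicity-one bookkeeping via reflexivity and the classical single-sweep property of the tangent developable, which the paper glosses over; and the Schubert-calculus reformulation via $\sigma_1$, which the paper records separately as a remark rather than inside the proof. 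One small slip in wording: the map you denote $\widetilde C\dashrightarrow C^*$ and call birational is not actually a rational map to $C^*$ (it depends on the auxiliary $\Lambda$ and has image only the finite set $C^*\cap M$); what your argument really uses, and correctly establishes, is that for generic $\Lambda$ the finite correspondence $p\mapsto\operatorname{span}(T_{\nu(p)}C,\Lambda)$ between $\{p\in\widetilde C: T_{\nu(p)}C\cap\Lambda\neq\emptyset\}$ and $C^*\cap M$ is a bijection, by reflexivity.
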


\begin{proof}
The tangent developable $\mathcal{T}$ of $C$ is a surface in $\PP^n$. Its degree is obtained by intersecting it with a generic subspace $U\subset \PP^n$ of codimension $2$. On the other hand, $C^*$ is a hypersurface and its degree is computed by intersecting it with a generic line $L\subset (\PP^n)^*$. A generic subspace $U\subset \PP^n$ of codimension $2$ is dual to a generic line $L\subset (\PP^n)^*$. An intersection point of $L$ with $C^*$ corresponds to a hyperplane $H\subset \PP^n$ that is tangent to $C$ at a smooth point $x$. Since this hyperplane $H$ moves in the pencil of hyperplanes containing $L^* = U$, the tangent line to $C$ at $x$ intersects $U$. Conversely, an intersection point $x$ of $U$ with the tangent developable $\mathcal{T}$ of $C$ gives rise to a tangent hyperplane $H$ to $C$, namely the span of $U$ and the tangent line to $C$, on which $x$ lies. So this $H$ is an intersection point of $L$ with $C^*$. This one-to-one correspondence between intersection points of $U$ with $\mathcal{T}$ and intersection points of $L$ with $C^*$ shows the claim that $\deg \mathcal{T}  = \deg C^*$.
\end{proof}

\begin{remark}
The tangent developable $\mathcal{T}$ gives rise to a curve in $\mathbf{Gr}(2,n+1)$, and $C^*$ to a curve in $\mathbf{Gr}(n-1,n+1)$. Duality sets up a natural isomorphism $\mathbf{Gr}(2,n+1) \to \mathbf{Gr}(n-1,n+1)$ that restricts to an isomorphism $\mathcal{T}\to C^*$.
\end{remark}

Now we compute $\deg \mathcal{T}$ via the Riemann-Hurwitz formula: consider the projection of $C_4$ from a generic projective plane to $\PP^1$; each tangent line that intersects the chosen plane gives a ramification point of the projection. The degree of the projection is $\deg C_4 = 620$. Pandharipande's work provides an accurate description of the singularities of $C_4$. From that, we deduce that the ramification points of the map arise either from points of $C_4$ at which the tangent line intersects the given plane, or as projections of the cusps of $C_4$. The cusps of this curve correspond exactly to plane quartics $Q_u$ with two nodes and a cusp \cite[Lemma 3]{Panda:CanonicalClass}. There are $N_{A_1^2 A_2}=2304$ curves with that property that go through $p_1,\ldots,p_{10}$. We now put together all these information in the Riemann-Hurwitz formula:
\begin{equation}
    2\cdot 725 - 2 = 620 \cdot (-2) + (2304 + \deg \mathcal{T}),
\end{equation}
from which we obtain that $\deg C_4^* = \deg \mathcal{T} = 384$.

\paragraph{(C): a node at $p_i$.}
To $X^{[3]}$ belong also those points $u$ such that $Q_u$ has a node at $p_i$, for a fixed $i$, and another node in the plane. Every choice of $p_i$ gives rise to an irreducible component. Using Kazarian's formulae with
\begin{equation}\label{eq:divisors_X3_Bordiga_node}
    D = 4\pi^*(L) - 2E_i - \sum_{j\neq i} E_j, \qquad\qquad K_X = -3\pi^*(L) + \sum_{j=1}^{10} E_j,
\end{equation}
on $\mathrm{Bl}_{p_1,\ldots,p_{10}} \PP^2$, i.e., with parameters $\mathsf{d} = 3, \mathsf{k} = -1, \mathsf{s} = -1, \mathsf{x} = 13$, we find that the degree of each of these components is $N_{A_1} = 20$. In total, the contribution of case (C) to $\deg X^{[3]}$ is $20\cdot 10 = 200$. We discuss in the next section the dual varieties of these curves and their degree.

We describe in this section the singularities of the ten irreducible components of $X^{[3]}$ arising from case (C). From the analysis of the singularities we will deduce the degree of their dual varieties.
Let $Y\subset (\PP^4)^*$ be the Zariski closure of the points $u$ corresponding to plane quartics $Q_u$ with a node at $p_1$ and another node somewhere in $\PP^2$. We have the following description.
\begin{theorem}\label{thm:sing_Y}
The curve $Y$ is a plane curve of degree $20$ and geometric genus $9$ with $114$ nodes and $48$ cusps.
A point $u\in Y$ is a node if and only if $Q_u$ has a node at $p_1$ and two other nodes in $\PP^2$. A point $u\in Y$ is a cusp if and only if $Q_u$ has a node at $p_1$ and a cusp in $\PP^2$.
The plane curve dual to $Y$ has degree $8$ and $12$ cusps and no nodes.
\end{theorem}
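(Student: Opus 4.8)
The plan is to realize $Y$ as the discriminant curve of a two-dimensional linear system on the blow-up, and then to combine Kazarian's enumeration with the classical Plücker formulas. On $S = \textrm{Bl}_{p_1,\dots,p_{10}}\PP^2$ consider the complete linear system $|D'|$ attached to the divisor $D' = 4\pi^*(L) - 2E_1 - \sum_{j=2}^{10}E_j$ of \eqref{eq:divisors_X3_Bordiga_node}; by the genericity hypotheses on the $p_i$ it has projective dimension $2$, and its members are the strict transforms $\widehat{Q}_u$ of the plane quartics with a node at $p_1$ passing through $p_2,\dots,p_{10}$. Under the identification $|D'|\cong\PP^2\subset(\PP^4)^*$ the curve $Y$ coincides with the discriminant hypersurface of $|D'|$: for $u$ generic in $Y$ the quartic $Q_u$ has an ordinary node at $p_1$ and exactly one further node in $\PP^2$, so $\widehat{Q}_u$ is $1$-nodal, and we already know $\deg Y = N_{A_1} = 20$ with parameters $\mathsf{d} = 3$, $\mathsf{k} = -1$, $\mathsf{s} = -1$, $\mathsf{x} = 13$. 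I would also record that $Y$ is irreducible: a reducible plane quartic through ten general points has at least three singular points, so the generic $Q_u$ attached to a point of $Y$ is irreducible, and $Y$ is then the image of a Severi variety of irreducible nodal quartics with one marked node, which is irreducible by Harris's irreducibility theorem.

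Next I would read off the singularities of $Y$ from the standard local model for the discriminant of a linear system on a smooth surface: at a member with $\delta$ ordinary nodes and nothing worse the discriminant has exactly $\delta$ smooth branches meeting pairwise transversally, and at a member with one ordinary cusp it has a cusp. For general $p_i$, apart from the node at $p_1$ every member of $|D'|$ has at most two nodes or one cusp --- the loci of members with three further nodes, a tacnode, an ordinary triple point, or a worse-than-nodal singularity at $p_1$ all have codimension at least $3$ in $|D'| = \PP^2$, hence are empty. Therefore $Y$ has only nodes and cusps, whose numbers are given by Kazarian's formulae for the same parameters $(\mathsf{d},\mathsf{k},\mathsf{s},\mathsf{x}) = (3,-1,-1,13)$: the number of members of $|D'|$ with two further nodes is $N_{A_1^2} = 114$ and the number with one cusp is $N_{A_2} = 48$. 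Since a plane curve of degree $20$ has arithmetic genus $\binom{19}{2} = 171$, the geometric genus of $Y$ is $171 - 114 - 48 = 9$; equivalently, the normalization of $Y$ is the incidence curve $\{(u,y)\;:\; y \text{ a singular point of } Q_u\}$, which is smooth for general $p_i$.

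I would then obtain the dual curve from Plücker's formulas applied to the irreducible plane curve $Y$ of degree $20$ with $114$ nodes and $48$ cusps. Its class, that is $\deg Y^*$, equals $20\cdot 19 - 2\cdot 114 - 3\cdot 48 = 8$. The number of cusps of $Y^*$ equals the number of ordinary flexes of $Y$, namely $3\cdot 20\cdot 18 - 6\cdot 114 - 8\cdot 48 = 12$. Finally the number $\tau$ of nodes of $Y^*$ equals the number of bitangents of $Y$, which I would extract from the dual Plücker relation expressing $\deg Y$ in terms of the class, bitangents and flexes of $Y$: this gives $20 = 8\cdot 7 - 2\tau - 3\cdot 12$ and hence $\tau = 0$; the same conclusion follows from invariance of the geometric genus under duality, $9 = \binom{7}{2} - \tau - 12$. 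Thus $Y^*$ is a plane curve of degree $8$ with twelve cusps and no nodes.

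The main obstacle, and the only step that is not purely formal, is the genericity input: for a general configuration of ten points one must know that $Y$ acquires no singularity worse than the $114$ nodes and $48$ cusps counted above, so that its geometric genus is exactly $9$, and dually that $Y$ has no hyperflexes or multitangents beyond bitangents, so that $Y^*$ has no singularity worse than its twelve cusps and the Plücker relations used above are literally valid. I expect the cleanest route is to combine the codimension bound in $|D'| = \PP^2$ (ruling out tacnodes, triple points and three nodes on $Y$) with a single explicit numerical example of ten points in $\PP^2$ for which the degree, nodes, cusps, flexes and the absence of bitangents of $Y$ are verified by computer algebra. A second, purely bookkeeping, obstacle is the evaluation of Kazarian's closed formulae for $N_{A_1^2}$ and $N_{A_2}$ at $(\mathsf{d},\mathsf{k},\mathsf{s},\mathsf{x}) = (3,-1,-1,13)$, which is routine but lengthy.
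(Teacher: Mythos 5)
Your argument is correct, and it arrives at the same numbers, but the logical flow runs in the opposite direction from the paper's. You compute the invariants of $Y$ \emph{directly} as a discriminant curve in the net $|D'|$ using Kazarian's multisingularity formulae ($\deg Y = N_{A_1} = 20$, $\delta = N_{A_1^2} = 114$, $\kappa = N_{A_2} = 48$), then pass to the dual curve by Pl\"ucker to find $\deg Y^* = 8$ with $12$ cusps and no nodes. The paper instead first establishes structural control on $Y^*$ itself: it identifies the net $|D'|$ with a triple cover $f\colon X\to\PP^2$, recognizes $Y^*$ as the branch divisor $B$ of $f$, and invokes Miranda's theory of triple covers (via the split Tschirnhausen bundle $\mathcal{O}(-2)\oplus\mathcal{O}(-2)$) to obtain $\deg B = 8$, that every singularity of $B$ is a total-ramification cusp, that there are $3c_2(E)=12$ of them, and that $g(B)=9$. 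A short local analysis of the fibers of $f$ over tangent and flex lines to $B$ then shows $Y = B^*$, and Pl\"ucker recovers degree $20$, $114$ nodes, $48$ cusps. The trade-off is genuine: your route is numerically more direct and uses fewer geometric inputs, but it must assume the discriminant has the ``expected'' local structure (nodes of $Y$ at $2$-nodal members, cusps at cuspidal members, nothing worse), which you flag as a genericity hypothesis to be checked. The paper's triple-cover route buys exactly this: Miranda's classification of singularities of $B$ and the explicit local computation of the fiber over a tangent/flex line prove that $Y$ is the dual of a cuspidal curve without needing a separate positivity or transversality check on the discriminant, and simultaneously hand you the genus $9$ without a normalization argument. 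The paper does record your Kazarian-based count in a remark, so the two computations corroborate one another; yours would make a clean alternative proof once the genericity of the net $|D'|$ is pinned down (e.g.\ by a single explicit example, as you suggest).
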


\begin{proof}
It will be convenient to change the point of view and introduce an appropriate incidence variety. Let us first define the ambient space. Notice that the curve $\widetilde{Q}_u$ always contains the exceptional line $E_1$, since $Q_u$ has a node at $p_1$. Hence, the $u$'s that belong to $Y$ must satisfy $u^\perp \supset E_1$. This condition on the points cuts out a projective plane in $(\PP^4)^*$ that we denote by $\mathcal{H}_{E_1}$. Therefore, $Y$ is a plane curve in $\mathcal{H}_{E_1}\subset (\PP^4)^*$.

The curves $\widetilde{Q}_u$ for $u\in \mathcal{H}_{E_1}$ form the (base-point free) linear system associated to the divisor $D = 4L - 2E_1 - \sum_{i=2}^{10} E_i$ on $X$. The associated line bundle has $3$ global sections and the associated morphism to $\PP^2$ has degree $D^2 = 16 - 4 - 9 = 3$. We denote this $3:1$ cover of $\PP^2$ by $f$. Our curve $Y$ turns out to be the dual curve to the branch divisor of $f$, which is described in detail in \cite[Section 10]{miranda:triple}. 

The Tschirnhausen bundle associated to $f$ in \cite{miranda:triple} is the bundle $E = \mathcal{O}_{\PP^2}(-2)\oplus \mathcal{O}_{\PP^2}(-2)$ of rank $2$ on $\PP^2$, compare ibid.,~Table~10.5. Indeed, the associated surface $S$ is the blow up of $\PP^1 \times \PP^1$ in $9$ points. The pullback of a line in $\PP^2$ corresponds to a curve of bidegree $(2,3)$ in $\PP^1\times \PP^1$ through the nine basepoints. This surface $S$ is isomorphic to our Bordiga surface via the isomorphism of the blow up of $\PP^1\times \PP^1$ in one point and the blow up of $\PP^2$ in two points since $2(L-E_2) + 3(L-E_1) - (L-E_1-E_2) - \sum_{i=3}^{10} E_i = 4L - 2E_1 - \sum_{i=2}^{10}E_i$. This decomposition is meaningful because the $(-1)$-curve $L-E_1-E_2$ in the blow up of $\PP^2$ in two points corresponds to the exceptional divisor of the blow up of $\PP^1\times \PP^1$ in one point. Miranda's results give us the numbers in Theorem~\ref{thm:sing_Y} via the branch divisor $B$ of $f$. First, the degree of the branch divisor $B\subset \PP^2$ is $8$ by \cite[Proposition 4.7]{miranda:triple}.
Secondly, its singularities are determined by ibid.,~Lemma~4.8, which says that every singularity of the branch divisor $B$ of $f$ is a point of total ramification, and ibid.,~Corollary~5.8(ii), which shows that such a total ramificaction point is a double point of $B$ with one tangent and therefore generally an ordinary cusp. The number of such cusps is then counted in ibid.,~Lemma~10.1 as $3c_2(E)$ in terms of the second Chern class of the Tschirnhausen bundle $E$. Since this bundle is split in our case, we can compute the Chern classes of $E$ as the elementary symmetric polynomials in the Chern classes of the line bundle $\mathcal{O}_{\PP^2}(-2)$ so that $c_1(E) = -4L$ and $c_2(E) = 4 L^2$ in the Chow ring of $\PP^2$, see \cite[Corollary 5.4]{EisHar:3264}. So in summary, the curve $B$ has degree $8$, $12$ cusps, and genus $9$.

We show next that $Y$ is indeed the dual curve to the branch divisor $B$ of $f$. The pullback of a line $L$ in $\PP^2$ via $f$ is the curve $\widetilde{Q}_u$ for some appropriate $u\in \mathcal{H}_{E_1}$ and the restriction of $f$ to $Q_u$ is a triple cover of $\PP^1$. Since $Q_u$ has genus $2$ and the branch divisor has degree $8$, the curve $Q_u$ is smooth if the intersection of the line $L$ and the branch divisor $B$ is transversal by Riemann-Hurwitz. If the line $L$ is tangent to $B$ at a smooth point $p\in B$, then we choose a local coordinate $x$ on $L$ so that $p=0$. By Miranda's results, the fiber $f^{-1}(p)$ contains two points, one of them double. Locally (in an analytic sense) around the double point, we get a $2:1$ cover of an affine line that we can write as $z^2 = a_2x^2 + a_3x^3 + \ldots$. Here, the term on the right hand side starts with a quadratic term because $L$ is tangent to $B$ at $p$. If $a_2\neq 0$, this shows that the curve $Q_u$ has a node at the double point in $f^{-1}(p)$. If the line $L$ is even a flex line, then $a_3$ is also $0$ and the double point in $f^{-1}(p)$ is a cusp of $Q_u$. This shows both that $Y$ is the dual curve of $B$ and moreover it gives the characterization of the singularities of $Y$ in the theorem. Their numbers are determined by Plücker's formulae in terms of the degree and the singularities of $B$.
If $d$, $g$, $\delta$, and $\kappa$ denote the degree, geometric genus, number of nodes, and number of cusps for a plane curve $C$ and dually $d^*$, $g^*$, $\delta^*$, and $\kappa^*$ the invariants of $C^*$ then
\begin{align*}
    & d^* = d(d-1) - 2\delta - 3\kappa \\
    & d = d^*(d^*-1) - 2\delta^* - 3 \kappa^*\\
    & g = g^* =\frac12 (d^*-1)(d^*-2) - \delta^* - \kappa^*
\end{align*}
holds, see \cite{piene:pluecker}. So with $Y = B^*$ as well as $d=8$, $\delta =0$, and $\kappa = 12$, we find $d^* = 20$, $\delta^* = 114$, and $\kappa^* = 48$.
\end{proof}

The dual of the curve $Y$ as a curve in $\PP^4$ is a cone with a line of vertices that is dual to the plane containing $Y$. So we get the following result.
\begin{corollary}
    The variety $\left( X_{(C)}^{[3]} \right)^* \subset \PP^4$ is the union of $10$ hypersurfaces, arising as cones over plane curves, each of degree $8$. 
\end{corollary}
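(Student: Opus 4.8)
The plan is to reduce the Corollary to Theorem~\ref{thm:sing_Y} together with the standard fact that the projective dual of a variety contained in a linear subspace is a cone. Since the $10$ irreducible components of $X_{(C)}^{[3]}$ are permuted by relabelling the points $p_1,\dots,p_{10}$, I would first observe that it suffices to analyze one of them, say $Y=Y_1$, the component parametrizing plane quartics $Q_u$ with a node at $p_1$ and a further node in $\PP^2$; the remaining nine cases are obtained by the obvious relabelling.

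Next I would recall, from the proof of Theorem~\ref{thm:sing_Y}, that $Y$ is contained in the plane $\mathcal{H}_{E_1}\subset(\PP^4)^*$ of hyperplanes $u^\perp$ containing the exceptional line $E_1$, and that the plane curve dual to $Y$ inside $\mathcal{H}_{E_1}$ has degree $8$. The core of the argument is then the general lemma: if $Z\subset\PP^N$ lies in a linear subspace $\Lambda$ of dimension $m$, then $Z^*\subset(\PP^N)^*$ is the cone, with vertex $\Lambda^\perp$ (a linear space of dimension $N-m-1$), over the dual $Z^*_\Lambda\subset\Lambda^*$ computed within $\Lambda$. I would prove this by noting that a hyperplane $H\subset\PP^N$ is tangent to $Z$ at a smooth point $z$ exactly when $H\supseteq\Lambda^\perp$ (equivalently $H$ is the preimage of a hyperplane of $\Lambda$ under the linear projection away from $\Lambda^\perp$) and the hyperplane of $\Lambda$ cut out by $H$ is tangent to $Z$ at $z$; the locus of such $H$ is precisely the cone over $Z^*_\Lambda$ with vertex $\Lambda^\perp$.

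Finally I would apply this with $Z=Y$, $\PP^N=(\PP^4)^*$ and $\Lambda=\mathcal{H}_{E_1}$ a plane, so that the vertex $\Lambda^\perp$ is a line in $\PP^4$, dual to the plane $\mathcal{H}_{E_1}$, and $Y^*$ is the cone with this line of vertices over the degree-$8$ dual curve. Coning a curve over a line of vertices produces a $3$-dimensional variety, hence a hypersurface in $\PP^4$, and coning does not change the degree, so $\deg Y^*=8$ (and $Y^*$ is irreducible since $Y$ is). The union of the $10$ cones $Y_1^*,\dots,Y_{10}^*$ is exactly $\left(X_{(C)}^{[3]}\right)^*$, which gives the claim. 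I do not expect a genuine obstacle here: the argument is essentially bookkeeping about which dual space each object lives in, plus the routine check that the cone has the expected dimension and degree; all the substantive content is already contained in Theorem~\ref{thm:sing_Y}.
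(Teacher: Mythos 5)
Your approach is essentially the same as the paper's: the paper dispatches the corollary in one sentence, observing that each component $Y_i$ is a plane curve in $\mathcal{H}_{E_i}\subset(\PP^4)^*$, so its dual in $\PP^4$ is a cone over the degree-$8$ plane dual curve (from Theorem~\ref{thm:sing_Y}) with a line of vertices dual to that plane. You correctly state the general lemma and apply it, and the dimension and degree bookkeeping is right.

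One slip in your proof sketch of the lemma: the equivalence ``$H$ is tangent to $Z$ at $z$ exactly when $H\supseteq\Lambda^\perp$ and $H\cap\Lambda$ is tangent to $Z$ at $z$'' is not correct. In the duality used to place the vertex, $\Lambda^\perp$ lives in $(\PP^N)^*$, so the containment $H\supseteq\Lambda^\perp$ is not even well-formed; and if one instead reads $\Lambda^\perp$ as some chosen complement of $\Lambda$ inside $\PP^N$, the forward implication fails, since a hyperplane tangent to $Z\subset\Lambda$ need only contain the embedded tangent space $T_zZ\subset\Lambda$ and certainly need not contain that complement. The correct statement is that $Z^*$ is the closure of the preimage of $Z^*_\Lambda$ under the linear projection $(\PP^N)^*\dashrightarrow\Lambda^*$ centered at $\Lambda^\perp\subset(\PP^N)^*$, together with $\Lambda^\perp$ itself, which is exactly the cone with vertex $\Lambda^\perp$ over $Z^*_\Lambda$. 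This is a presentational error rather than a genuine gap, since the lemma as you stated it is correct and your application of it matches the paper.
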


\begin{remark}
We can also count the singularities of $Y$ by counting plane quartics $Q_u$ that are more singular. More precisely, the number of nodes of $Y$ is the number of plane quartics with a node at $p_1$ and two other nodes on $\PP^2$; the number of cusps of $Y$ is the number of plane quartics with a node at $p_1$ and a cusp somewhere in $\PP^2$. They are Kazarian's numbers $N_{A_1^2}=114$ and $N_{A_2}=48$ respectively, for the divisors in \eqref{eq:divisors_X3_Bordiga_node}.
\end{remark}

We are now done with the case $k=3$. Indeed, assume that the quartic $Q_u$ is irreducible. Then it can have three nodes somewhere in $\PP^2$, that make three nodes also on $\widetilde{Q}_u$ (case (B)). Otherwise, we can impose that $Q_u$ has a node at one of the fixed points $p_i$, and another node in the plane, so that the pullback has three singularities (case (C)). There are no other options for irreducible quartics. If $Q_u$ is reducible, we have different cases. Assume that it is the union of a cubic and a line; then this makes three singularities (case (A)). If $Q_u$ is the union of two conics, or one conic and two lines, or four lines, then there are too many singularities. So this case is not of interest for $k=3$. All the possible cases have been discussed above. We summarize our findings for $X^{[3]}$ and $(X^{[3]})^*$ in the following proposition.

\begin{proposition}\label{prop:Bordiga_3}
Let $X\subset \PP^4$ be a generic Bordiga surface. Then $X^{[3]}\subset (\PP^4)^*$ is the union of $66$ irreducible components. Among them, $55$ are projective lines $\PP^1$, one is a Severi variety of degree $620$ and the remaining $10$ are cones over plane curves of degree $20$. Therefore, we have $\deg X^{[3]} = 675$.\\
The dual variety $\left( X^{[3]}\right)^*\subset \PP^4$ is the union of $55$ projective planes $\PP^2$, an irreducible variety of degree $384$ and $10$ irreducible varieties of degree $8$. Only the last two types of components are hypersurfaces, with total degree $464$.
\end{proposition}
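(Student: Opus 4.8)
The statement collects the case analysis of paragraphs~(A), (B) and (C) above together with Theorem~\ref{thm:sing_Y} and the degree computations already carried out, so the plan is to proceed in three steps: first verify that cases~(A)--(C) exhaust the irreducible components of $X^{[3]}$ and are pairwise distinct, then read off the components and their degrees, and finally dualise each component. The two genuinely substantive inputs are already in place --- the value $\deg C_4^{*}=384$ obtained above via the tangent developable and Riemann--Hurwitz, and the structure of the case-(C) curves coming from Miranda's theory of triple covers in Theorem~\ref{thm:sing_Y} --- so what remains is essentially bookkeeping.

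For the first step I would run the exhaustiveness argument sketched above: a generic point $u$ of an irreducible component of $X^{[3]}$ produces a plane quartic $Q_u$ through $p_1,\ldots,p_{10}$ with $\widetilde Q_u$ having exactly three nodes, and one checks that $Q_u$ is forced to be either an irreducible rational quartic with three nodes off the base locus (case~(B)), an irreducible quartic of geometric genus $1$ with a node at one base point $p_i$ and exactly one further node (case~(C)), or a line together with a reduced cubic in one of the two sub-configurations of case~(A); any other splitting (two conics, a conic and two lines, or four lines) or a second node at a base point already forces a fourth singularity on $\widetilde Q_u$.

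For the second step: case~(A) contributes the $\binom{10}{2}=45$ lines $\PP^1\subset(\PP^4)^{*}$ on which the line is the fixed chord through $p_i,p_j$ and the cubic varies in the pencil through the remaining eight points, together with the $10$ lines on which the unique cubic through nine of the points is fixed and the line varies in the pencil through $p_i$; case~(B) contributes the Severi curve $C_4$, irreducible of degree $620$ by~\cite{KonMan:GromovWittenClasses}; and case~(C) contributes the $10$ plane curves of degree $20$, one in each plane $\mathcal{H}_{E_i}\subset(\PP^4)^{*}$, by Theorem~\ref{thm:sing_Y}. These $55+1+10=66$ families are irreducible and pairwise distinct --- $C_4$ parametrises irreducible rational quartics, the case-(C) curves parametrise irreducible quartics of geometric genus $1$, the $55$ lines parametrise reducible quartics, and all are one-dimensional, so none is contained in another --- hence they are precisely the irreducible components of $X^{[3]}$, and the sum of their degrees is $55\cdot 1+620+10\cdot 20$.

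For the last step, a line $\ell\subset(\PP^4)^{*}$ is linear, so its dual variety is the set of hyperplanes of $(\PP^4)^{*}$ that contain $\ell$, a plane $\PP^2\subset\PP^4$ of codimension $2$; hence the $55$ lines dualise to the $55$ planes $\PP^2$, none of which is a hypersurface. The dual of $C_4$ is the irreducible hypersurface of degree $384$ computed above. Finally, each case-(C) curve $Y_i$ is a plane curve of degree $20$ contained in the plane $\mathcal{H}_{E_i}\subset(\PP^4)^{*}$; by Theorem~\ref{thm:sing_Y} its dual plane curve has degree $8$, and the dual variety of $Y_i$ in $\PP^4$ is the cone over this octic with vertex the line dual to $\mathcal{H}_{E_i}$, hence an irreducible hypersurface of degree $8$; adding up, the hypersurface part of $\left(X^{[3]}\right)^{*}$ has total degree $384+10\cdot 8=464$. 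I expect the first step to be where most of the care is needed: one must make sure the list of quartic configurations is genuinely complete and that the $55$ lines and the ten case-(C) curves are not absorbed into $C_4$ or into one another, so that the $66$ families really are the irreducible components.
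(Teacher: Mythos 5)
Your proposal is correct and follows essentially the same route as the paper: the same case distinction (A) line+cubic, (B) trinodal irreducible quartics giving the Severi curve $C_4$, (C) quartics with a node at a base point, the same invocations of $\deg C_4^*=384$ and of Theorem~\ref{thm:sing_Y} for the case-(C) octics, and the same dualization bookkeeping. Your extra paragraph checking pairwise distinctness of the $66$ families (via the different types of quartic they parametrize) is a small tightening that the paper leaves implicit, but it does not change the argument.
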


\subsubsection{The irreducible components for k=4}

We are left to compute the cardinality of $X^{[4]}$, whose points correspond to plane quartics $Q_u$ such that $\widetilde{Q}_u$ has $4$ nodes. The partitions are $4 = 2\cdot 2$ (case (C)), $4 = 1\cdot 2 + 2\cdot 1$ (case (B)), and $4 = 4\cdot 1$ (case (A)).

\paragraph{(A): four nodes in $\PP^2$.} Assume that $Q_u$ is a plane quartic with four nodes. Then it must be reducible and two configurations are possible: the union of two conics through $5$ points each, or the union of a singular cubic through eight points and a line through the other two. In the first situation we get $\frac{1}{2}\cdot\binom{10}{5} = 126$ quartics. In the second one, whenever we choose $8$ points, there are $12$ singular plane cubics through them. The number $12$ can be understood as the degree of the discriminant of cubics, or it can be computed with Kazarian's formulae for $D=3L$, $K_{\PP^2} = -3$ on $\PP^2$, i.e., $\mathsf{d} = 9, \mathsf{k} = -9, \mathsf{s} = 9, \mathsf{x} = 3$.
Hence, we have $\binom{10}{8} \cdot 12 = 540$ such quartics $Q_u$.
In total, this case contributes $666$ points of $X^{[4]}$, in agreement with Kazarian's number $N_{A_1^4}$ for the system of plane quartics with four nodes.

\paragraph{(B): a node at $p_i$.} Consider plane quartics with a node at one of the $p_i$'s and other two nodes somewhere in $\PP^2$. Once we fix $p_i$, we can count this number as $N_{A_1^2}$  for
\begin{equation}
    D = 4\pi^*(L) - 2E_i - \sum_{j\neq i} E_j, \qquad\qquad K_X = -3\pi^*(L) + \sum_{j=1}^{10} E_j,
\end{equation}
i.e., with parameters $\mathsf{d} = 3, \mathsf{k} = -1, \mathsf{s} = -1, \mathsf{x} = 13$.
We get $N_{A_1^2} = 114$ using Kazarian's formulae, thus this case contributes with $10\cdot 114 = 1140$ points to $X^{[4]}$.

\paragraph{(C): two nodes at $p_i$, $p_j$.} Assume the plane quartic $Q_u$ has a node at $p_i$ and a node at $p_j$. There is only one choice for such a curve. Therefore, this case contributes with as many points as the choices of $i,j$, i.e., $\binom{10}{2}=45$.

Note that in case (B) we also count case (C). Indeed, $114$ are the quartics on the Bordiga surface coming from a curve with a node at $p_i$, and two more nodes outside $E_i$. Curves $\widetilde{Q}_u$ of case (C), where $Q_u$ has nodes at $p_i, p_j$, belong to case (B) twice: one time for $p_i$ and one for $p_j$. In total, we have $\# X^{[4]} = 666+(1140-90)+45 = 1761$, as predicted in Table \ref{table:degreesXk}. 

There are no other possible ways to obtain four singularities on $\widetilde{Q}_u$. If $Q_u$ is irreducible, then it must have one or two nodes at the $p_i$'s and two or one other node in the plane, respectively (cases (B) and (C)); four nodes in $\PP^2$ would make it reducible. If $Q_u$ is reducible, then there are two cases (both in case (A)): it is either the union of a cubic with a line, in which case the cubic must have a node, or it is the union of two conics. The configuration with a conic and two lines or the one with four lines produce too many singularities, so we do not consider them. We summarize our findings for $X^{[4]}$ and $(X^{[4]})^*$ in the following proposition.

\begin{proposition}\label{prop:Bordiga_4}
Let $X\subset\PP^4$ be a generic Bordiga surface. Then $X^{[4]}\subset (\PP^4)^*$ is the union of $1761$ points. Therefore, $\left( X^{[4]} \right)^*\subset \PP^4$ is the union of $1761$ hyperplanes.
\end{proposition}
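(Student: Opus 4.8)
The statement has two parts: that $X^{[4]}\subset(\PP^4)^*$ consists of exactly $1761$ points, and that its dual is an arrangement of $1761$ hyperplanes. The plan is to use the case analysis (A), (B), (C) carried out above, convert the enumerative counts into a count of \emph{distinct} points by inclusion--exclusion, and then invoke the elementary fact that the dual variety of a finite reduced point set in $(\PP^4)^*$ is the corresponding union of hyperplanes in $\PP^4$.

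First I would record that $\dim X^{[4]}=0$ is the expected value ($X^{[k]}$ has expected dimension $4-k$ for a surface in $\PP^4$, by Terracini's lemma) and establish exhaustiveness: every $u\in X^{[4]}$ produces a plane quartic $Q_u$ with $\widetilde{Q}_u$ having exactly four nodes, and a node of $Q_u$ at a base point $p_i$ contributes two nodes to $\widetilde{Q}_u$ (the two branches meeting $E_i$), so at most two base points can carry nodes of $Q_u$. If $Q_u$ is irreducible then it has at most three nodes in the smooth part of $\PP^2$ by the genus--degree formula, which forces at least one node at a base point; this gives case (B) (a node at one base point and two further nodes) and case (C) (nodes at two base points and no further node). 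If $Q_u$ is reducible, the only decompositions producing exactly four nodes of $\widetilde{Q}_u$ are two conics, each through five of the ten points, and a nodal cubic through eight of the points together with a line through the remaining two — a conic with two lines, or four lines, produce more singularities, and no component passes through a base point with multiplicity $\ge 2$ by the general-position hypotheses. This is case (A), finite of cardinality $\tfrac12\binom{10}{5}+\binom{10}{2}\cdot 12 = 126+540 = 666$, the $12$ being the degree of the discriminant of plane cubics.

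Next I would assemble the count. By Kazarian's formulae applied to the divisor $D=4\pi^*(L)-2E_i-\sum_{j\ne i}E_j$ on $\mathrm{Bl}_{p_1,\dots,p_{10}}\PP^2$ (parameters $\mathsf{d}=3$, $\mathsf{k}=-1$, $\mathsf{s}=-1$, $\mathsf{x}=13$), case (B) contributes $N_{A_1^2}=114$ for each fixed $i$, so $1140$ in total; case (C) contributes $\binom{10}{2}=45$, since by genericity there is a unique plane quartic with nodes at $p_i$ and $p_j$ through the remaining eight points. The case-(A) points are disjoint from those of cases (B) and (C), since the former quartics are reducible with every base point a smooth point, while the latter are irreducible and singular at a base point. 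Each case-(C) quartic (nodes at $p_i,p_j$) is counted once in the $N_{A_1^2}$ tally for $p_i$ and once for $p_j$ — in that situation the two ``extra'' nodes recorded by $N_{A_1^2}$ are the two points where the strict transform meets the exceptional line of the other base point — so the $45$ points of case (C) account for $90$ of the $1140$ case-(B) instances. Removing this double count gives $\#\,X^{[4]}=666+(1140-90)+45=1761$. Finally, since the dual variety of a point $[u]\in(\PP^4)^*$ is the hyperplane $u^\perp\subset\PP^4$ and the $1761$ points are distinct (hence reduced), $\left(X^{[4]}\right)^*$ is the union of the $1761$ corresponding hyperplanes.

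The step I expect to be the real obstacle is the genericity bookkeeping: one must check that Kazarian's numbers count distinct reduced points on a \emph{generic} Bordiga surface — equivalently that the relevant pencils of cubics meet the discriminant transversally and that no incidence among the three families beyond the one noted above occurs — and one must verify that the four singular points of $C_u$ are linearly independent in $\PP^4$, so that $X^{[4]}$ as defined really coincides with the locus of four-nodal hyperplane sections. These are exactly the statements guaranteed by the general-position assumptions imposed at the beginning of the section.
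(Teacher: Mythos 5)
Your argument is essentially the same as the paper's: the same three-way case analysis (reducible quartics giving $666$, one node at a base point giving $1140$, two nodes at base points giving $45$), the same observation that each case-(C) quartic is double-counted in case (B), and the same arithmetic $666 + (1140 - 90) + 45 = 1761$. The additional remarks on dualizing a reduced finite point set and on the genericity bookkeeping are correct, but they are implicit in the paper's appeal to Kazarian's formulae for a generic Bordiga surface.
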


\subsection[The real picture]{The $\R$eal picture}
Again, we want to dive in the realm of convex and real algebraic geometry in this section. Here, we only consider the real forms of Bordiga surfaces arising as blow ups of the real projective plane. 
To such a Bordiga surface we can associate a $3\times 4\times 5$ tensor. A generic such tensor gives a smooth Bordiga surface. By considering the matrix space with respect to the third factor, we get a $3\times 4$ matrix with linear entries in $5$ variables. The $3\times 3$ minors of this matrix are the equations defining the Bordiga surface in $\PP^4$. By considering the matrix space with respect to the first factor, we get a $4\times 5$ matrix with linear entries in $3$ variables. The zero locus of the maximal minors of this matrix define $10$ points in $\PP^2$: the base points of the blow up that gives $X$.
The Bordiga surface whose associated tensor has real entries are certainly real algebraic surfaces. As a consequence, the ten base points are either real or they come in complex conjugate pairs. However, if one of the points, say $p_1$, is real, then $X$ contains the real line $E_1$. Therefore, the real locus of $X$ is not compact on any affine chart of $\PP^4$. So the relevant case for us is when $p_1,\ldots,p_{10}$ are five pairs of complex conjugate points in $\PP^2$.

The central question is again: Which of the $\left( X^{[k]} \right)^*$ can contribute to $\partial_a \conv X$?
From the blow up construction of $X$ we can deduce that $u^\perp$ is a supporting hyperplane if and only if the polynomial $q_u$ defining the quartic $Q_u\subset \PP^2$ has constant sign on $\PP^2$. We are going to examine the points of the $X^{[k]}$ to understand if such a property is realizable in the various cases. We will focus only on those components of the $X^{[k]}$ whose duals are hypersurfaces and so have a chance to be part of the algebraic boundary. This analysis will affect the choices of the ten base points $p_1,\ldots, p_{10}$. The results will be summarized in Theorem \ref{thm:bordiga}.

We start with $X^{[2]}$. The only candidate here is case (B), i.e., a plane quartic through $p_1,\ldots,p_{10}$ with two nodes in $\PP^2$. In order to show that this configuration can produce supporting hyperplanes, we will construct first the hyperplane section, and we will choose the Bordiga surface in a second moment. Define the quartic polynomial $q_u$ in the following way. Pick two points $x,y$ in the plane and let $c_1, c_2, c_3$ be homogeneous quadratic polynomials in three variables defining three conics that intersect in $x,y$. Define $q_u = c_1^2+c_2^2+c_3^2$. Since it is a sum of squares, $q_u$ is non-negative on the whole $\PP^2$, and zero only at the two singular points $x,y$. Hence $u^\perp$ is a separating hyperplane for $X$, where $X$ is the blow up of $\PP^2$ in ten points that are five pairs of complex conjugate zeros of $q_u$.

Regarding $X^{[3]}$, we have to analyse both case (B) and (C). Case (B) is given by rational quartics through $p_1,\ldots,p_{10}$, which have three nodes $x,y,z$ in $\PP^2$. We can repeat the same procedure as for $X^{[2]}_{(B)}$ to construct a separating hyperplane; the only difference is that now the three conics will have to intersect in the three nodes $x,y,z$.
The points in case (C) represent quartics $Q_u$ with a node at one of the ten points, say $p_1$, and another node $x\in\PP^2$. Since we want the hyperplanes coming from $X^{[k]}$ to support $X$ in $k$ real points, then in this case $p_1$ is forced to have real coordinates. This implies that there is also another base point, say $p_2$ which is real. However, $p_2$ is a smooth point for the quartic $Q_u$. Therefore, in a neighborhood of $p_2$ the polynomial $q_u$ changes sign. This implies that these $u^\perp$ cannot be a supporting hyperplane for $X$, independently of the choice of the $p_i$.

Finally, $X^{[4]}$ is subdivided in three cases. In case (A), the quartics $Q_u$ are the union of either two conics or a line and a cubic. In both cases, there is no way to make just the four singular points real, hence these reducible quartics change sign in $\PP^2$. So the associated hyperplanes cannot support $X$, independently of the choice of the $p_i$. 
Case (B) is similar to the variety $X^{[3]}_{(C)}$, since we have a node at, say, $p_1$ and therefore $p_2$ must be real as well. However, it is a smooth point of $Q_u$, and therefore there is a sign change that prevent $u^\perp$ from being a supporting hyperplane, independently of the choice of the $p_i$. 
The points of case (C) correspond to quartics with two nodes at, say, $p_1,p_2$. Therefore both of them must be real. In this setting, we can consider three conics $c_1,c_2,c_3$ that intersect exactly at $p_1,p_2$. Then, define $q_u = c_1^2+c_2^2+c_3^2$. This provides a non-negative quartic on $\PP^2$ which is zero precisely at $p_1,p_2$. Hence $u^\perp$ is a separating hyperplane for $X$, where $X$ is the blow up of $\PP^2$ in ten points that are $p_1,p_2$ and four pairs of complex conjugate zeros of $q_u$. In particular, this shows that just one hyperplane of $X^{[4]}_{(C)}$ can be a supporting hyperplane of $X$.

The following statement summarizes the whole discussion on the Bordiga case.
\begin{theorem}\label{thm:bordiga}
Let $X\subset\PP^4$ be a smooth Bordiga surface with compact real locus in an appropriate affine chart, arising as the blow up of $\PP^2$ at ten base points $p_1,\ldots,p_{10}$ which are five generic pairs of complex conjugates. Then
    \begin{equation}
        \partial_a (\conv X) \subset \left( X^{[2]}_{(B)} \right)^* \cup \left( X^{[3]}_{(B)} \right)^*, 
        \qquad 
        \deg \partial_a (\conv X) \leq \deg \left( X^{[2]}_{(B)} \right)^* + 384.
    \end{equation}
\end{theorem}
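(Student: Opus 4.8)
The plan is to combine the complex-geometric classification of the components of the $X^{[k]}$ with two successive reductions. First I would invoke the Ranestad--Sturmfels inclusion~\eqref{eq:formula_boundary_ch}, which gives $\partial_a(\conv X)\subseteq\bigcup_{k=1}^{4}(X^{[k]})^*$; the term $k=1$ contributes $(X^{[1]})^*=X$, which is not a hypersurface and so can be dropped. For $k=2,3,4$, Propositions~\ref{prop:Bordiga_2}, \ref{prop:Bordiga_3} and~\ref{prop:Bordiga_4} list all irreducible components of $X^{[k]}$ together with their duals. Since every irreducible component of the hypersurface $\partial_a(\conv X)$ must be a component of some $(X^{[k]})^*$ that is itself a hypersurface, I discard the linear components (the $10$ lines $\PP^1$ inside $(X^{[2]})^*$ and the $55$ planes $\PP^2$ inside $(X^{[3]})^*$, all of codimension at least $2$). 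This leaves as candidates only $(X^{[2]}_{(B)})^*$, the dual $(X^{[3]}_{(B)})^*=C_4^*$ of the Severi curve (of degree $384$), the $10$ cones $(X^{[3]}_{(C)})^*$ of degree $8$, and the $1761$ hyperplanes comprising $(X^{[4]})^*$.

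The second reduction is the real one. The translation recalled before the theorem says that $u^\perp$ supports $\conv X$ precisely when the quartic form $q_u$ has constant sign on $\PP^2(\R)$ and vanishes at a real point; when this holds, every nonzero real zero of $q_u$ is a local minimum of the form $q_u$ and hence a real singular point of the plane quartic $Q_u$. Under the hypothesis, all ten base points $p_1,\dots,p_{10}$ are non-real. For a generic $u$ on a component of $(X^{[3]}_{(C)})^*$ the quartic $Q_u$ has a node at a base point $p_i$, so by reality it has a node also at $\overline{p_i}$; together with the remaining floating node this makes $Q_u$ a reducible or over-singular real quartic whose structure --- as spelled out case by case in the paragraphs preceding this theorem --- forces either a real linear or conic factor, across which $q_u$ changes sign, or a configuration in which $u^\perp$ exposes only a face of dimension less than $3$. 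In either case $u^\perp$ is not an irreducible component of $\partial_a(\conv X)$. The identical case analysis applied to the three cases (A), (B), (C) distinguished for $X^{[4]}$ eliminates the $1761$ hyperplanes as well. Hence no candidate other than $(X^{[2]}_{(B)})^*$ and $(X^{[3]}_{(B)})^*$ survives, which proves $\partial_a(\conv X)\subseteq(X^{[2]}_{(B)})^*\cup(X^{[3]}_{(B)})^*$. That the two survivors genuinely have to be kept follows from the explicit construction $q_u=c_1^2+c_2^2+c_3^2$ with the $c_i$ conics through two (respectively three) chosen points, after which one takes the ten base points to be five complex-conjugate pairs of zeros of $q_u$.

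For the degree bound I would use $\deg[(X^{[2]}_{(B)})^*\cup(X^{[3]}_{(B)})^*]\le\deg(X^{[2]}_{(B)})^*+\deg(X^{[3]}_{(B)})^*$ together with $\deg(X^{[3]}_{(B)})^*=\deg C_4^*=\deg\mathcal{T}(C_4)=384$, the value established earlier from the tangent-developable interpretation and the Riemann--Hurwitz count for the projection of $C_4$ to $\PP^1$; this gives $\deg\partial_a(\conv X)\le\deg(X^{[2]}_{(B)})^*+384$. The main obstacle is exactly that the first summand $\deg(X^{[2]}_{(B)})^*$ is not available: as the remark after Proposition~\ref{prop:Bordiga_2} explains, the tangent-developable trick that handles the curve $C_4$ does not carry over, since $X^{[2]}_{(B)}$ is a \emph{surface} rather than a curve and is not known to be smooth, so one would first need a usable description of the tangent bundle of a desingularization. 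A smaller but genuine subtlety is to make the reducible-quartic case distinction in the second step fully rigorous for \emph{generic} base points --- in particular to rule out that a product of two complex-conjugate conics, which is a nonnegative quartic, can acquire enough real zeros to expose a $3$-dimensional face of $\conv X$ --- and this is where the genericity hypothesis on $p_1,\dots,p_{10}$ enters.
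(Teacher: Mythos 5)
Your proposal reproduces the paper's two-stage strategy. In the first stage you invoke the Ranestad--Sturmfels inclusion~\eqref{eq:formula_boundary_ch} and the complex classification of the irreducible components of the $X^{[k]}$ in Propositions~\ref{prop:Bordiga_2}, \ref{prop:Bordiga_3}, \ref{prop:Bordiga_4}, discarding all components whose duals have codimension at least two. In the second stage you use the equivalence ``$u^\perp$ supports $\conv X$ if and only if $q_u$ has constant sign on $\PP^2(\R)$'', together with the fact that the ten base points are all non-real, to exclude $(X^{[3]}_{(C)})^*$ and the hyperplanes dual to $X^{[4]}$, and you obtain the degree bound from $\deg (X^{[3]}_{(B)})^* = 384$ plus the unknown $\deg (X^{[2]}_{(B)})^*$. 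This is exactly the paper's route, and the small imprecision in your treatment of $X^{[3]}_{(C)}$ (a quartic with three nodes is rational, not reducible; the correct point is that a real $u$ with a node at $p_i$ forces a node at $\overline{p_i}$, so real points of that component are non-generic on it) does not affect the conclusion.

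The subtlety you flag in your last sentence, however, is a genuine one which the paper's argument does not resolve. For $X^{[4]}_{(A)}$ the quartic $Q_u$ can be a product $c\,\overline{c}$ of two complex-conjugate smooth conics, one through five of the base points (one from each conjugate pair) and the other through the remaining five. Then $q_u = |c|^2$ is nonnegative, and its real zero locus is $c\cap\overline{c}\cap\PP^2(\R)$. Taking four real points $a_1,\dots,a_4$, a non-real conic $c$ in the real pencil through them, and the ten base points to be five non-real points of $c$ together with their conjugates on $\overline{c}$, one gets $c\cap\overline{c} = \{a_1,\dots,a_4\}$ entirely real; the hyperplane $u^\perp$ then meets $X(\R)$ at the four images $\varphi(a_i)$, which span $u^\perp$ (a plane cannot meet a twisted cubic in four points), so $u^\perp$ exposes a $3$-dimensional face and would be a component of $\partial_a(\conv X)$. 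The paper's Zariski-open genericity conditions do not exclude this, since the number of real points of $c\cap\overline{c}$ is a semialgebraic, not Zariski, condition on the base configuration. So the paper's claim that ``there is no way to make just the four singular points real'' appears too strong; a complete proof would need either a sharper real-genericity hypothesis or to allow finitely many such hyperplanes in the right-hand side of the inclusion.
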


It is natural to ask whether equality in the formulae in Theorem \ref{thm:bordiga} can be achieved. The answer is positive, as shown in the following example. 
\begin{example}
Consider the two homogeneous quartic polynomials
\begin{align}
    \begin{split}
        f_2 &= (2 x_0^2 + x_0 x_1 + 2 x_0 x_2 - x_1^2)^2 \!+\! (x_0^2 - x_1^2 - x_2^2)^2 \!+\! (x_0^2 - 2 x_0 x_2 - x_1^2 - x_1 x_2 - 3 x_2^2)^2,
    \end{split}\\
    \begin{split}
    f_3 &= (x_0 x_1 + x_0 x_2 - x_1^2 - x_2^2)^2 \!+\! (x_0 x_1 + 4 x_0 x_2 - x_1^2 - 4 x_1 x_2 - 4 x_2^2)^2\\ 
    &\hspace{7cm} + (2 x_0 x_1 - 2 x_0 x_2 - 2 x_1^2 - x_1 x_2 + 2 x_2^2)^2.
    \end{split}
\end{align}
They are the sum of the squares of three quadratic polynomials, defining plane conics. The three conics of $f_i$ intersect in $i$ real points, as shown in Figure \ref{fig:f_i}. Therefore, the real locus of the quartic $\{f_i=0\}$ consists of $i$ nodes, the green dots in Figure \ref{fig:f_i}.
\begin{figure}[bh]
    \centering
    \begin{tikzpicture}
\begin{axis}[
width=2.3in,
height=2.3in,
scale only axis,
xmin=-1.5,
xmax=1.5,
ymin=-1.5,
ymax=1.5,
ticks = none, 
ticks = none,
axis background/.style={fill=white},
axis x line = middle, 
axis y line = middle, 
axis line style={draw=none},
samples=400
]

\addplot[domain=-2:2, line width=0.3mm, color=black] {1/2*x^2-1/2*x-1};
\addplot[domain=-1.03785211:1.40149, line width=0.3mm, color=black] {1/6*(-2-x+sqrt(16+4*x-11*x^2))};
\addplot[domain=-1.03785211:1.40149, line width=0.3mm, color=black] {1/6*(-2-x-sqrt(16+4*x-11*x^2))};
\draw[line width=0.3mm, color=black] (0,0) circle (1);

\addplot[only marks,mark=*,mark size=3pt,mygreen,
]  coordinates {
    (0,-1) (-1,0)
};
\end{axis}
\end{tikzpicture}
    \qquad
    \begin{tikzpicture}
\begin{axis}[
width=2.3in,
height=2.3in,
scale only axis,
xmin=-1.5,
xmax=1.5,
ymin=-1.5,
ymax=1.5,
ticks = none, 
ticks = none,
axis background/.style={fill=white},
axis x line = middle, 
axis y line = middle, 
axis line style={draw=none},
samples=400
]

\addplot[domain=-8:0.99, line width=0.3mm, color=black] {1/2*(1-x-sqrt(1-x))};
\addplot[domain=-8:0.99, line width=0.3mm, color=black] {1/2*(1-x+sqrt(1-x))};
\addplot[domain=-1.5:1.5, line width=0.3mm, color=black] {1/4*(2+x-sqrt(4-12*x+17*x^2))};
\addplot[domain=-1.5:1.5, line width=0.3mm, color=black] {1/4*(2+x+sqrt(4-12*x+17*x^2))};
\draw[line width=0.3mm, color=black] (0.5,0.5) circle (0.7071);

\addplot[only marks,mark=*,mark size=3pt,mygreen,
]  coordinates {
    (0,1) (1,0) (0,0)
};
\end{axis}
\end{tikzpicture}
    \caption{The construction of the polynomials $f_i$. Left: the three conics that are summands of $f_2$, intersecting at $(-1,0), (0,-1)$. Right: the three conics that are summands of $f_3$, intersecting at $(0,0), (1,0), (0,1)$. }
    \label{fig:f_i}
\end{figure}
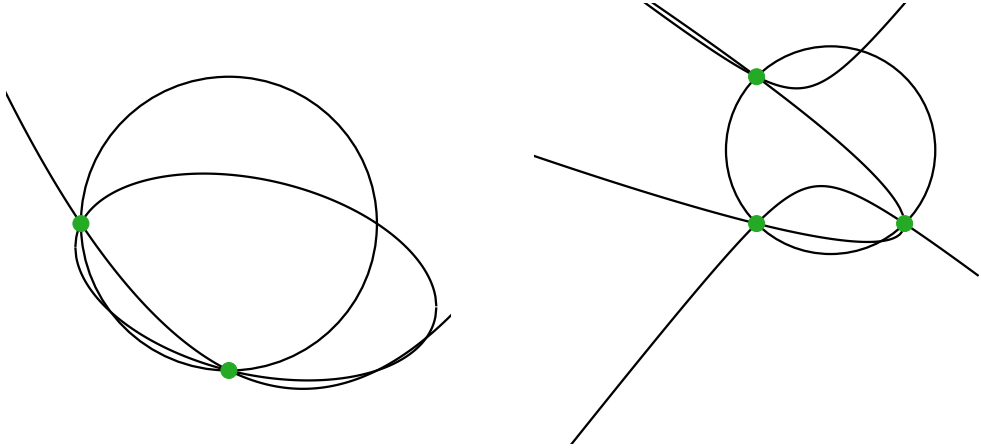
One can check that the set $\{f_2=f_3=0\}$ consists of $8$ pairs of complex conjugate points. Let $p_1,\ldots,p_{10}$ be five such pairs, and let $X = \textrm{Bl}_{p_1, \ldots ,p_{10}} \PP^2 \subset\PP^4$ be the associated Bordiga surface. Then there exist $u_2,u_3 \in (\PP^4)^*$ such that $Q_{u_i} = \{f_i = 0\}$. By construction, this implies that $u_2 \in X^{[2]}_{(B)}$ and $u_3 \in X^{[3]}_{(B)}$. Moreover, since the quartics $f_i$ are sums of squares, the corresponding hyperplanes separate $X$ and therefore belong to the algebraic boundary. Then, the algebraic boundary consists of two irreducible components:
\begin{equation}
    \partial_a(\conv X) = \left( X^{[2]}_{(B)} \right)^* \cup \left( X^{[3]}_{(B)} \right)^*.
\end{equation} 
\end{example}

\bibliographystyle{alpha}
\bibliography{biblio}

\end{document}